\documentclass{article}
\usepackage[utf8]{inputenc}				
\usepackage{microtype}

\usepackage[numbers,square,sort&compress]{natbib}

\usepackage{caption}
\usepackage{subcaption}
\usepackage{amsmath}
\usepackage{amssymb}
\usepackage{bbm}
\usepackage{verbatim}
\usepackage{graphicx,epstopdf}
\usepackage{array}
\usepackage{tikz-cd}
\usepackage{xfrac}
\usepackage{faktor}

\usepackage{braket,amsfonts}
\usepackage{multirow}
\usepackage{graphbox}
\usepackage{overpic}
\usepackage{algorithm}
\usepackage{algorithmic}

\usepackage{graphicx}
\usepackage{amsmath,amsfonts,amsthm}
\usepackage{amssymb,latexsym}
\usepackage{fixmath}
\usepackage{mathrsfs,amsbsy}
\usepackage{enumerate}
\usepackage{algorithm,algorithmic}
\usepackage{kbordermatrix}
\usepackage{xcolor}
\usepackage{caption}
\usepackage{subcaption}

\usepackage{cleveref}   

\def\vextra{\vphantom{\vrule height0.40cm width0.9pt depth0.1cm}}

\def\bu{\pmb{u}}

\def\bw{\pmb{w}}
\def\bx{\pmb{x}}
\def\by{\pmb{y}}
\def\bz{\pmb{z}}

\def\wtd{\widetilde}
\def\what{\widehat}

\def\bbC{\mathbb{C}}
\def\bbH{\mathbb{H}}
\def\bbR{\mathbb{R}}

\def\ibbR{\iota\mathbb{R}}

\def\cI{{\cal I}}

\def\cN{{\cal N}}

\def\scrA{\mathscr{A}}
\def\scrB{\mathscr{B}}
\def\scrR{\mathscr{R}}
\def\scrF{\mathscr{F}}
\def\scrL{\mathscr{L}}

\DeclareMathOperator{\F}{F}
\DeclareMathOperator{\HH}{H}
\DeclareMathOperator{\T}{T}

\DeclareMathOperator{\eig}{eig}
\DeclareMathOperator{\NRes}{NRes}

\def\CARE{\mbox{\sc care}}

\def\FP{\mbox{\sc fp}}
\def\LOS{\mbox{\sc los}}
\def\LQR{\mbox{\sc lqr}}
\def\mNT{m\!\mbox{\sc nt}}
\def\NT{\mbox{\sc nt}}
\def\PSD{\mbox{\sc psd}}
\def\SDA{\mbox{\sc sda}}
\def\SDRE{\mbox{\sc sdre}}
\def\SCARE{\mbox{\sc scare}}

\def\SSDRE{\mbox{\sc ssdre}}
\def\SDC{\mbox{\sc sdc}}

\DeclareMathOperator{\rmc}{c}
\DeclareMathOperator{\dd}{d\!}
\DeclareMathOperator{\diag}{diag}
\DeclareMathOperator{\vex}{vec}

\date{today}
\overfullrule = 0pt
\textheight200mm
\textwidth155mm \hoffset-1.2cm

\numberwithin{equation}{section}

\newtheorem{theorem}{Theorem}[section]

\newtheorem{lemma}{Lemma}[section]

\theoremstyle{definition}
\newtheorem{assumption}{Assumption}[section]

\newtheorem{remark}{Remark}[section]
\newtheorem{example}{Example}[section]



\newcommand{\tr}{\textcolor{red}}

\numberwithin{algorithm}{section}

\allowdisplaybreaks

\title{Numerical Solutions for Stochastic Continuous-time Algebraic Riccati Equations}
\author{Tsung-Ming Huang\thanks{ Department of Mathematics,  National Taiwan Normal
    University, Taipei 116, Taiwan  ({\tt min@ntnu.edu.tw}).} \and
    Yueh-Cheng Kuo\thanks{Department of Mathematical Sciences, National Chengchi University, Taipei 116, Taiwan ({\tt 	kuoyc@nccu.edu.tw}).} \and
    Ren-Cang Li\thanks{ Department of Mathematics,
    University of Texas at Arlington, Arlington, USA ({\tt rcli@uta.edu}).} \and
    Wen-Wei Lin\thanks{ Nanjing Center for Applied Mathematics,
Nanjing, China; Department of Applied Mathematics, National Yang Ming Chiao Tung University, Hsinchu 300, Taiwan. ({\tt wwlin@math.nctu.edu.tw}).}}
\date{\today}

\begin{document}

\maketitle

\begin{abstract}
We are concerned with efficient numerical methods for  stochastic continuous-time algebraic Riccati equations (\SCARE). Such equations frequently arise from
the state-dependent Riccati equation approach which is perhaps the only systematic way today to study nonlinear control problems. Often involved Riccati-type equations are of small scale, but have to be solved repeatedly in real time. Important applications include the 3D missile/target engagement, the F16 aircraft flight control, and the quadrotor optimal control, to name a few. A new inner-outer iterative method
that combines the fixed-point strategy and the structure-preserving doubling algorithm (\SDA) is proposed. It is proved that the method is monotonically convergent, and in particular, taking the zero matrix as initial,
the method converges to the desired stabilizing solution. Previously,  Newton's method
has been called to solve \SCARE, but it was mostly investigated from its theoretic aspect than numerical aspect in terms of robust and efficient numerical implementation. For that reason, we
revisit Newton's method for \SCARE, focusing on how to calculate each Newton iterative step efficiently
so that Newton's method for \SCARE\ can become practical.
It is proposed to use our new inner-outer iterative method, which is provably
convergent, to provide critical initial starting points for Newton's method to ensure its convergence.
%
Finally several numerical experiments are conducted to validate the new method and robust implementation of Newton's method.
\end{abstract}

\section{Introduction}\label{sec:intro}
The state-dependent Riccati equation (\SDRE) approach mimics the well developed linear optimal control theory \cite{zhdg:1995}
to deal with nonlinear control problems. It was proposed in \cite{pear:1962} and has become very popular within the last three decades due to its simplicity and practical
effectiveness. The basic idea is to simply hide nonlinearity in a nonlinear control system via expressing the dynamics system
 in the same form as a linear time-invariant control system, except that the coefficient matrices
depends on the state vector instead of being constant, while minimizing a nonlinear cost function of a quadratic-like structure, and then adopt the same solution formulas in form for the feedback control from the linear optimal control theory
by freezing the state-dependent coefficient matrices at a particular state. The approach, also known as {\em extended linearization}, provides perhaps the only systematic way to study nonlinear control problems, albeit with suboptimal solutions \cite{alks:2023,balt:2007,cime:2008,neko:2019}. The \SDRE\ scheme manifests state and control weighting functions to ameliorate the overall performance \cite{cime:2012}, as well as, capabilities and potentials of other performance merits such as global asymptotic stability \cite{lixi:2019,lilc:2018,behe:2018}.

There are a number of practical and successful applications in literature of the \SDRE\ approach:
the differential \SDRE\  with impact angle guidance strategies \cite{lwhxwl:2023,nakm:2021a} for 3D pursuer/target traject tracking or interception engagement,
the finite-time \SDRE\  for F16 aircraft flight controls \cite{cpws:2022},
the \SDRE\  optimal control design for quadrotors for enhancing the robustness against unmodeled disturbances \cite{chhu:2022}, and
the \SDRE\  position/velocity controls for a high-speed vehicle \cite{abbe:1999a},
the \SDRE\ nonlinear optimal controller for a non-holonomic wheeled moving robot in a dynamic environment with moving obstacles \cite{asfo:2020}, and the \SDRE\ optimal control method for distance-based formation control of multiagent systems with energy constraints \cite{base:2020}, to name a few.
%

The nonlinear system models with state-dependent linear structure in these applications we just mentioned
contain no stochastic components  to counter uncertainties, which go against
real-world environments where noises always come into play. In order to better simulate real-world situations,
we may have to build  a stochastic component into the models and then solve them by an extension of the \SDRE\ approach,
which we will call {\em the stochastic \SDRE\ (\SSDRE) approach}.

%
%
%

The stochastic {\em state-dependent\/} linear-structured control system  in continuous-time subject to multiplicative white noises
is described as
\begin{align}\label{eq:DSys-SSDC}
\dd\bx(t) =\Big[ A(\bx) \bx + B(\bx) \bu\Big]\, \dd t + \sum_{i=1}^r \Big[A_0^i(\bx) \bx + B_0^i(\bx)\bu\Big] \dd \bw_i(t),
\end{align}
where $\bx\equiv \bx(t)$ and $\bu\equiv \bu(t)$ are the state vector and the control input, respectively,
$A(\bx),\,A_0^i(\bx) \in \bbR^{n \times n}$ and $B(\bx)$, $B_0^i(\bx) \in \bbR^{n \times m}$
for $i = 1, \ldots, r$ are the state-dependent coefficient (\SDC) matrices,
$\bw(t) = [ w_1(t), \cdots, w_r(t) ]^{\T}$ is a standard Wiener process vector whose entries $w_i(t)$ describe standard Brownian motions.
The word ``{\em state-dependent\/}'' reflects the fact that the coefficient matrices are matrix-valued function of  state vector $\bx$.
Associated with this linear-structured nonlinear dynamical system \eqref{eq:DSys-SSDC} is a quadratic-structured cost functional:
\begin{subequations}\label{eq:quad-func}
\begin{equation}\label{eq:quad-func-1}
    J(t_0, \bx_0; \bu) = E \left\{ \int_{t_0}^{\infty} \begin{bmatrix} \bx \\ \bu \end{bmatrix}^{\T}
    \begin{bmatrix}
        Q(\bx) & L(\bx) \\ L(\bx)^{\T} & R(\bx)
    \end{bmatrix}
    \begin{bmatrix}
        \bx \\ \bu
    \end{bmatrix} \dd t\right\},
\end{equation}
with respect to the state $\bx$ and control $\bu$ and subject to \eqref{eq:DSys-SSDC} with given initial value $\bx(t_0)=\bx_0$,
where $Q(\bx) \in \bbR^{n \times n}$, $L(\bx) \in \bbR^{n \times m}$, and $R(\bx) \in \bbR^{m \times m}$,
also depending on  state vector $\bx$, such that
\begin{equation}\label{eq:quad-func-2}
R(\bx)\succ 0
\quad\mbox{and}\quad
\begin{bmatrix}
        Q(\bx) & L(\bx) \\ L(\bx)^{\T} & R(\bx)
\end{bmatrix} \succeq 0,
\end{equation}
\end{subequations}
i.e., $R(\bx)$ is symmetric and positive definite and the bigger matrix is symmetric and positive semidefinite definite.
Equivalently, \eqref{eq:quad-func-2} is same as $R(\bx)\succ 0$ and $Q(\bx) - L(\bx) R(\bx)^{-1} L(\bx)^{\T} \succeq 0$.

When the \SDC\ matrices, including those in the cost functional, no longer depend on state $\bx$, the system
\eqref{eq:DSys-SSDC} with \eqref{eq:quad-func} becomes a stochastic linear time-invariant control system, whose control law
can be solved via the generalized type of algebraic Riccati equation \cite{damm:2004}. In the same spirit of the \SDRE\ approach
\cite{balt:2007,cime:2012,pear:1962},
mimicking the well-know \LQR\ (linear quadratic regulator) approach via the algebraic Riccati equation for the linear time-invariant control system,
the \SSDRE\ approach suboptimally
minimizes the cost functional \eqref{eq:quad-func-1} with state $\bx$ frozen and  computes a control law based on the solution
of the following {\em stochastic state-dependent continuous-time algebraic Riccati equation\/} 
in $X$:
\begin{align}
     & A(\bx)^{\T} X + X A(\bx) + \Pi_{11}(X) + Q(\bx) \nonumber \\
     & - \left[ X B(\bx) + \Pi_{12}(X) + L(\bx) \right]
      \left[ \Pi_{22}(X) + R(\bx)\right]^{-1}
     \left[ B(\bx) X + \Pi_{12}(X) + L(\bx) \right]^{\T} = 0. \label{eq:SSDCARE}
\end{align}
In the \SSDRE\ approach, \eqref{eq:SSDCARE} are repeatedly solved for numerous different frozen states $\bx$.

As far as solving \eqref{eq:SSDCARE}  for a particular frozen state $\bx$ is concerned, the role of $\bx$ is simply
used to determine the defining matrices to given a nonlinear matrix equation of the particular type.
For that reason, there is no loss of generality for us to suppress the dependency on $\bx$ and
focus on efficiently solving the nonlinear matrix equation
\begin{subequations}\label{eq:SCARE-0}
\begin{multline}\label{eq:SCARE-eq}
\scrR(X):=A^{\T}X+XA+Q+\Pi_{11}(X) \\
  -[XB+L+\Pi_{12}(X)][R+\Pi_{22}(X)]^{-1}[XB+L+\Pi_{12}(X)]^{\T}=0,
\end{multline}
where $\scrR(X)$ is defined as the rational matrix-valued function, the left-hand side of the equation,
and
\begin{align}\label{eq:SCARE-Pi}
\Pi(X)&:= \begin{bmatrix}
              \Pi_{11}(X) & \Pi_{12}(X) \\
              \Pi_{12}(X)^{\T} & \Pi_{22}(X)
            \end{bmatrix}
 := \begin{bmatrix}
              \sum\limits_{i=1}^r {A_0^{i}}^{\T}XA_0^i & \sum\limits_{i=1}^r {A_0^{i}}^{\T}XB_0^i \\
              \sum\limits_{i=1}^r {B_0^{i}}^{\T}XA_0^i & \sum\limits_{i=1}^r {B_0^{i}}^{\T}XB_0^i
            \end{bmatrix}\nonumber\\
         &=\begin{bmatrix}
              A_0^1 & B_0^1  \\
              \vdots &\vdots  \\
              A_0^r & B_0^r  \\
            \end{bmatrix}^{\T}\big(I_r\otimes X\big)  \begin{bmatrix}
              A_0^1 & B_0^1  \\
              \vdots &\vdots  \\
              A_0^r & B_0^r  \\
            \end{bmatrix},
\end{align}
Here, and in what follows, matrices $A,\,A_0^i,\, B,\,B_0^i,\, L,\, R$, etc. all have the same sizes as their state-dependent
counterparts above, and
\begin{equation}\label{eq:quad-func-const}
R\succ 0
\quad\mbox{and}\quad
\begin{bmatrix}
        Q & L \\ L^{\T} & R
\end{bmatrix} \succeq 0.
\end{equation}
\end{subequations}
It can be verified that \eqref{eq:quad-func-const} is equivalent to
\begin{equation}\tag{\ref{eq:quad-func-const}'}
R\succ 0
\quad\mbox{and}\quad
Q - L R^{-1} L^{\T}\succeq 0.
\end{equation}

We call \eqref{eq:SCARE-0} {\em the stochastic continuous-time algebraic Riccati equation\/} (\SCARE).
It takes exactly the same form as
the generalized type of algebraic Riccati equation in the case for the
stochastic linear time-invariant control problem \cite{damm:2004}.
From the perspective of mathematics, immediately two major  questions arises:
\begin{enumerate}[1)]
  \item Does \SCARE\ \eqref{eq:SCARE-eq}, as a nonlinear matrix equation, have a solution?
        If  it does, is there a solution that satisfies the need of any desired control law?
  \item How to efficiently solve \SCARE\ \eqref{eq:SCARE-eq} for the desired solution?
\end{enumerate}
Our task in this paper is to fully address the second question. But first we shall state an answer to the first question
from the literature.
To streamline the notation, we introduce
$A_0^0=A$ and $B_0^0=B$.
Two assumptions in the terminology of control are needed:

\begin{assumption}\label{asm:stab}
The pair $(\{ A_0^i \}_{i=0}^r, \{ B_0^i \}_{i=0}^r)$
          is stabilizable {\cite[Definition 4.1.2]{drms:2013}}, namely, there exists $F \in \bbR^{m \times n}$
           such that the linear differential equation for $S\equiv S(t)$:
           $$
           \frac{\dd}{\dd t}S=\scrL_{F}S:=(A + BF) S + S (A + BF)^{\T} + \sum_{i=1}^r(A_0^i + B_0^iF) S(A_0^i + B_0^iF)^{\T},
          $$
          is exponentially stable, or equivalently, the evolution operator $e^{\scrL_{F}(t-t_0)}$ is exponentially stable.
\end{assumption}

\begin{assumption}\label{asm:dete}
The pair $(  C,\{ \wtd A_0^i\}_{i=0}^r)$ is detectable {\cite[Definition 4.1.2]{drms:2013}}, where $C\in \mathbb{R}^{p\times n}$
          satisfies $C^{\T} C = Q - L R^{-1} L^{\T}$ and $\wtd A_0^i:= A_0^i-B_0^iR^{-1}L^{\T}$ for $0\le i\le r$, namely, there exists $K \in \bbR^{n \times p}$
          such that the linear differential equation for $S\equiv S(t)$:
          $$
          \frac{\dd}{\dd t}S=\scrL^{K}S:=
          (\widetilde{A}_0^0 + KC) S + S (\widetilde{A}_0^0 + KC)^{\T} + \sum_{i=1}^r \widetilde{A}_0^i S\,\big[\wtd A_0^{i}\big]^{\T},
          $$
          is exponentially stable.
\end{assumption}


%
%
%
%

We can now state the major theoretic result on the existence of a solution to \SCARE\ \eqref{eq:SCARE-0}.

\begin{theorem}[{\cite[Theorem 5.5.3]{drms:2013}}]\label{thm:existence}
If both Assumptions~\ref{asm:stab} and \ref{asm:dete} hold, then
\SCARE\ \eqref{eq:SCARE-0}
has a unique positive semi-definite (\PSD) solution $X_*$, which is also stabilizing,  such that
$$
\big(A + B F_*,A_0^1 + B_0^1 F_*, \cdots, A_0^r + B_0^r F_*\big)
$$
is stable, i.e., the linear differential equation $\frac{\dd}{\dd t}S(t)=\scrL_{F_*}S(t)$ is exponentially stable, where
\begin{align}
    F_* = - \Big[ \sum_{i=1}^r {B_0^i}^{\T} X_* B_0^i + R \Big]^{-1}
            \Big[ B^{\T} X_* + \sum_{i=1}^r {B_0^i}^{\T} X_* A_0^i + L^{\T}\Big]. \label{eq:F_X*}
\end{align}
\end{theorem}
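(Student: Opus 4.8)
The statement is the stochastic analogue of the classical theorem on the existence of a stabilizing solution of an algebraic Riccati equation, and I would establish it by a Kleinman--Newton monotone iteration built on the generalized Lyapunov operator $\scrL_F$ of Assumption~\ref{asm:stab}. The cornerstone is an algebraic completing-the-square identity. Writing $G_X:=XB+L+\Pi_{12}(X)$, $H_X:=R+\Pi_{22}(X)\succ0$, and $F_X:=-H_X^{-1}G_X^{\T}$ (so that $F_{X_*}=F_*$ in \eqref{eq:F_X*}), a direct expansion shows that for every $F\in\bbR^{m\times n}$,
\begin{equation*}
\scrR(X) = \scrL_F^{\dagger}(X) + Q_F - \big(F-F_X\big)^{\T} H_X \big(F-F_X\big),
\end{equation*}
where $\scrL_F^{\dagger}(X):=(A+BF)^{\T}X+X(A+BF)+\sum_{i=1}^r(A_0^i+B_0^iF)^{\T}X(A_0^i+B_0^iF)$ is the trace-adjoint of $\scrL_F$ (hence has the same spectrum, so stability of one is stability of the other), and $Q_F:=Q+LF+F^{\T}L^{\T}+F^{\T}RF\succeq0$ follows from \eqref{eq:quad-func-const} by congruence. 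The point of the identity is that it freezes the nonlinearity: once $F$ is fixed, $\scrL_F^{\dagger}(X)+Q_F=0$ is a linear (generalized Lyapunov) equation in $X$.

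The iteration is then as follows. By Assumption~\ref{asm:stab} I choose a stabilizing $F_0$, so that $\scrL_{F_0}^{\dagger}$ is exponentially stable; at step $k$ I solve $\scrL_{F_k}^{\dagger}(X_k)+Q_{F_k}=0$, whose unique solution $X_k=\int_0^\infty e^{t\scrL_{F_k}^{\dagger}}(Q_{F_k})\,\dd t$ is \PSD\ because the semigroup generated by $\scrL_{F_k}^{\dagger}$ (the adjoint of the positivity-preserving $e^{t\scrL_{F_k}}$, and thus itself positivity preserving on the self-dual \PSD\ cone) acts on the \PSD\ matrix $Q_{F_k}$; I then set $F_{k+1}:=F_{X_k}$. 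Evaluating the identity at $X=X_k$, $F=F_k$ gives $\scrR(X_k)=-(F_k-F_{X_k})^{\T}H_{X_k}(F_k-F_{X_k})\preceq0$, so every iterate is a subsolution. Evaluating it instead at $X=X_k$, $F=F_{k+1}$ (which kills the last term) and subtracting the Lyapunov equation for $X_{k+1}$ yields $\scrL_{F_{k+1}}^{\dagger}(X_k-X_{k+1})=\scrR(X_k)\preceq0$, whence $X_k-X_{k+1}\succeq0$ by the same positivity argument. Thus $X_0\succeq X_1\succeq\cdots\succeq0$ is monotonically decreasing and bounded below, hence convergent to some $X_*\succeq0$, and continuity of $F_X$ together with $F_k-F_{X_k}\to0$ forces $\scrR(X_*)=0$.

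Two points require genuine work, and the second is the main obstacle. First, I must show each $F_{k+1}$ remains stabilizing, for otherwise the next Lyapunov equation is not solvable; the certificate $\scrL_{F_{k+1}}^{\dagger}(X_k)=\scrR(X_k)-Q_{F_{k+1}}\preceq-Q_{F_{k+1}}\preceq0$ with $X_k\succeq0$ is only nonstrict, and it is Assumption~\ref{asm:dete} (detectability of $(C,\{\wtd A_0^i\})$) that upgrades it to exponential stability of the generalized operator $\scrL_{F_{k+1}}$. Passing to the limit, the same detectability argument applied to $\scrL_{F_*}^{\dagger}(X_*)=-Q_{F_*}\preceq0$ shows $\scrL_{F_*}$ is exponentially stable, i.e. $X_*$ is the stabilizing \PSD\ solution; uniqueness then follows by taking two \PSD\ stabilizing solutions $X,\wtd X$, using the identity to write $\scrL_{F_{\wtd X}}^{\dagger}(X-\wtd X)$ as a sign-definite expression, and invoking stability to force $X=\wtd X$. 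The hard part is exactly this interplay of the two assumptions in the stochastic setting: establishing positivity preservation of $e^{t\scrL_F}$ and, above all, turning a nonstrict Lyapunov inequality into genuine exponential stability of $\scrL_F$, where the sum over $i$ couples the drift with every diffusion term and one cannot reduce, as in the deterministic case, to a single matrix pencil.
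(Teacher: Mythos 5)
You should first note a structural point: the paper does not prove this theorem at all --- it is imported verbatim as \cite[Theorem~5.5.3]{drms:2013}, so there is no internal proof to compare against. Judged on its own merits, your route is the standard one in that literature (and in \cite{dahi:2001}): a Kleinman--Newton monotone iteration driven by the generalized Lyapunov operator. Your completing-the-square identity is correct (expanding $\scrL_F^{\dagger}(X)+Q_F-(F-F_X)^{\T}H_X(F-F_X)$ does recover $\scrR(X)$), the congruence argument for $Q_F\succeq 0$ is right, the positivity-preservation argument for the semigroup on the self-dual \PSD\ cone is right, and the monotone-decreasing, bounded-below scheme with $\scrR(X_k)\preceq 0$ and $\scrR(X_*)=0$ in the limit is the correct skeleton.

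The genuine gap is the one you yourself flag, and it is not a technicality but the entire content of the theorem in the stochastic setting. First, you never prove that each $F_{k+1}=F_{X_k}$ is stabilizing, nor that the limit feedback $F_*$ is; without this the Lyapunov equation at step $k+1$ need not be solvable and the iteration collapses. The needed lemma --- that Assumption~\ref{asm:dete} upgrades the nonstrict inequality $\scrL_{F}^{\dagger}(X)\preceq -Q_F\preceq 0$, $X\succeq 0$, to exponential stability of $\scrL_F$ --- is precisely where the coupling of drift and diffusion terms prevents the deterministic eigenvector argument from applying, and your proposal states the difficulty rather than resolving it. Second, there is a mismatch you do not address: Assumption~\ref{asm:dete} asserts detectability of the single fixed pair $(C,\{\wtd A_0^i\}_{i=0}^r)$ with the particular feedback $-R^{-1}L^{\T}$, whereas your argument needs a detectability-type property for the closed-loop pairs $\big(Q_{F_k}^{1/2},\{A_0^i+B_0^iF_k\}\big)$ for every iterate $F_k$; a feedback-invariance lemma for stochastic detectability must be proved to bridge the two. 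Third, your uniqueness argument, via mutual domination, only shows two \emph{stabilizing} solutions coincide; the theorem asserts uniqueness among all \PSD\ solutions, which requires the further step (again via detectability) that every \PSD\ solution is automatically stabilizing. As written, then, the proposal is a correct outline whose load-bearing steps remain unproven.
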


This theorem stipulates a sufficient condition for \SCARE\ \eqref{eq:SCARE-0} to have a unique  \PSD\ solution, which is also stabilizing. In applications, this is the solution of interest.
%
%
%
%
%
Existing methods to solve \SCARE\ \eqref{eq:SCARE-0} for its stabilizing solution include
Newton's  method (\NT) \cite{dahi:2001},
modified Newton's  method  \cite{chll:2011,guo:2002a,ivan:2007}
(for a special case of \eqref{eq:SCARE-0}),  the fixed-point (\FP) iteration \cite{guli:2023}
and a structure-preserving double algorithm (\SDA) \cite{guli:2023}.
Newton's method or its variants are often the defaults when it comes to solve a nonlinear equation but
they usually need sufficiently accurate starting points to begin with, and \FP\ iterations are usually slowly convergent.
The \SDA\ in \cite{guli:2023} is built upon a newly introduced operation which
induces an unappealing side-effect of doubling the sizes of the matrices per \SDA\ iterative step and
hence the method is more of theoretic interest than practical
one, despite of its mathematical elegance.
The main task of this paper is to develop efficient and reliable algorithms for solving \SCARE\ \eqref{eq:SCARE-0}, for
small and modest $n$. It is noted that \SCARE\ with small $n$ (up to a coupe of tens) are surprisingly common in real-world applications
(see, e.g., \cite{chhu:2022,cpws:2022,lwhxwl:2023,nakm:2021a} and references therein)
than one might think.
Specifically,
we consider the following two types of methods for solving \SCARE.
\begin{enumerate}[(i)]
\item \FP\SDA: \SCARE\ \eqref{eq:SCARE-0} can be rewritten as
      \begin{align}
      \scrR(X) := \big[A_{\rmc}(X)\big]^{\T} X + X A_{\rmc}(X) - X G_{\rmc}(X) X + H_{\rmc}(X) = 0 \label{eq:SCARE-CARE-0}
      \end{align}
      where $A_{\rmc}(X), G_{\rmc}(X)$ and $H_{\rmc}(X)$ are matrix-valued functions to be defined
      in \Cref{sec:prelim}. It has the form of a continuous algebraic Riccati equation (\CARE) from the optimal control theory \cite{hull:2018},
      except that here $A_{\rmc}(X), G_{\rmc}(X)$ and $H_{\rmc}(X)$ depend on the solution but are not constant.
      Hence if $A_{\rmc}(X), G_{\rmc}(X)$ and $H_{\rmc}(X)$ are freezed at an approximate solution, then \eqref{eq:SCARE-CARE-0} becomes
      a \CARE\ which may be solved by the structured-preserved doubling algorithm \SDA\ \cite[Chapter 5]{hull:2018} for hopefully
      a better approximate solution. The process repeats until convergence.
      We will investigate this method in \Cref{sec:FP-CARE-SDA}, including its practical implementation and convergence analysis.
      It will be shown that the method always converges to a stabilizing solution, starting from the initial approximation $X=0$.
      This is a new method.


%
\item Newton's method: \SCARE\ \eqref{eq:SCARE-0} is a nonlinear equation.  It is natural to solve it with Newton's method, and in fact
      that has been done in \cite{dahi:2001} mostly from the theoretic side of Newton's method in terms of the actual defining equation
      for Newton's iterative step and its convergence analysis rather than its effective and robust implementation from the practical side.
      We will revisit Newton's method for a practical implementation in \Cref{sec:Newton}.

\end{enumerate}
      Despite that generically Newton's method is eventually quadratically convergent, there are two major obstacles in the case of
      \SCARE\ \eqref{eq:SCARE-0}. The first obstacle is universal: Newton's method needs a sufficiently good initial approximation
      to ensure convergence, and the second one is particular to \SCARE\ \eqref{eq:SCARE-0}: the matrix equation for
      Newton's iterative step is challenging numerically unless $n$ is very small (up to a couple of tens). We will provide practical solutions to both obstacles in the case of \SCARE\ \eqref{eq:SCARE-0}.
      The first obstacle can be tackled by running \FP\SDA\ to calculate a decent initial approximation to feed into
      Newton's method, while for the second obstacle when $n$ is very small so that $n^2$ is modest,
      the  defining matrix equation for Newton's iterative step can be transformed into the standard form of a linear system of size $n^2$-by-$n^2$
      and the latter can then be solved by the Gaussian elimination. But when $n^2$ is too large (up to tens of thousands), we may have to resort some iterative schemes,
      one of which is to combining the fixed-point technique with Lyapunov equation solving to iteratively
      compute the Newton step. Whether this scheme for Newton steps is convergent or not is yet another problem to worry about.

This paper is organized as follows. In Section~\ref{sec:prelim}, we introduce some important properties of \SCARE.  In Section~\ref{sec:FP-CARE-SDA}, we propose a new inner-outer iterative method \FP\SDA, combining the fixed-point strategy with \SDA\ to solve \SCARE\ and conduct a convergence analysis of \FP\SDA. We revisit Newton's method for \SCARE, focusing on how to calculate each Newton iterative step efficiently in Section~\ref{sec:Newton}. Numerical results are presented to demonstrate the effective ness of our new method \FP\SDA\ and
our implementations of Newton's method in Section~\ref{sec:egs}. Finally, the conclusion is drawn in Section~\ref{sec:conclusion}.
There are three appendix sections. \Cref{apx:FP} reviews the \FP\ iteration in \cite{guli:2023} and compares it
with an \FP\ iteration based on the construction of the first standard form (SF1) in \cite{hull:2018}.
\Cref{sec:CARE-SDA} reviews \SDA\ for \CARE, tailored for the inner-iteration of our \FP\SDA.
In \Cref{sec:mNewton} we investigate a modified Newton's method for \SCARE\ as an extension of the one in Guo~\cite{guo:2002a}.

%
%

\smallskip\noindent
{\bf Notation.}
$\bbR^{n\times m}$ is the set of $n$-by-$m$ real matrices, $\bbR^n=\bbR^{n\times 1}$ and
$\bbR=\bbR^1$, and their complex counterparts are $\bbC^{n\times m}$, $\bbC^n$ and $\bbC$.
Finally, $\bbC_-$ stands for the set of all complex numbers in the left half of the complex plane, and $\iota$ is the imaginary unit.
$I_n\in\bbR^{n\times n}$ is the $n$-by-$n$ identity matrix.

$\bbH^{n\times n}\subset\bbR^{n\times n}$ is the set of $n$-by-$n$ real symmetric matrices.
For $X\in \bbH^{n\times n}$,
$X\succeq 0$ ($X\succ 0$) means that $X$ is positive semidefinite (positive definite), and
$X\preceq 0$ ($X\prec 0$) means $-X\succeq 0$ ($-X\succ 0$). This introduces
a partial order in $\bbH^{n\times n}$: $X\succeq Y$ ($X\succ Y$) if $X-Y\succeq 0$ ($X-Y\succ 0$) and vice versa.

Given a matrix/vector $B$, $B^{\T}$ and $B^{\HH}$ denote its transpose and the complex conjugate transpose, respectively.
$\|B\|_p$ and $\|B\|_{\F}$ are the $\ell_p$ operator norm and Frobenius norm of $B$, respectively,
and $\cN(B)$ denotes its null space.
If $B$ is also square, $\eig(B)$ is the multiset of the eigenvalues of $B$.
Other notations will be explained at their first appearances.

\section{Properties of \SCARE}\label{sec:prelim}
In this section, we will establish a few preliminary results related to \SCARE\ \eqref{eq:SCARE-0} to set up the stage
for our algorithmic design and convergence analysis throughout the rest of this paper.
It is assumed that \eqref{eq:quad-func-const} holds.
Define 
\begin{subequations}\label{eq:c-matrices}
\begin{equation}\label{eq:SCARE-1b}
     L_{\rmc}(X) = L +  \Pi_{12}(X), \quad R_{\rmc}(X) = R+\Pi_{22}(X), \quad Q_{\rmc}(X) = Q+\Pi_{11}(X).
\end{equation}
and
\begin{align}
      A_{\rmc}(X) &= A - B\big[R_{\rmc}(X)\big]^{-1} \big[L_{\rmc}(X)\big]^{\T}, \label{eq:SCARE-Ac}\\
      G_{\rmc}(X) &= B \big[R_{\rmc}(X)\big]^{-1} B^{\T}, \label{eq:SCARE-Gc}\\
      H_{\rmc}(X) &= Q_{\rmc}(X) - L_{\rmc}(X) \big[R_{\rmc}(X)\big]^{-1} \big[L_{\rmc}(X)\big]^{\T}. \label{eq:SCARE-Hc}
\end{align}
\end{subequations}
An equivalent form of \eqref{eq:SCARE-0} is
\begin{equation}\label{eq:SCARE-CARE-1}
  \scrR(X) := \big[A_{\rmc}(X)\big]^{\T} X + X A_{\rmc}(X) - X G_{\rmc}(X) X + H_{\rmc}(X) = 0.
\end{equation}
Frequently, we will use this fact:
$\Pi(X)\succeq 0$ for any $X\succeq 0$ and
\begin{align}
      \Pi(X) \succeq \Pi(Y) \succeq 0\quad \mbox{for} \quad X \succeq Y \succeq 0. \label{eq:assumption_Pi}
\end{align}

\begin{lemma} \label{lm:stabilizable}
If $(A,B)$ is stabilizable,
then $\big(A_{\rmc}\big(\wtd X\big), G_{\rmc}\big(\wtd X\big)\big)$ is stabilizable for any $\wtd X\succeq 0$.
\end{lemma}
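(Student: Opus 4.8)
The plan is to reduce the statement to the Popov--Belevitch--Hautus (PBH) eigenvector test and to exploit two elementary invariances of stabilizability: invariance under state feedback and invariance under post-multiplying the input matrix by an invertible matrix. First I would record the one structural fact that makes the claim work, namely that $R_{\rmc}(\wtd X)=R+\Pi_{22}(\wtd X)\succ 0$ for every $\wtd X\succeq 0$; this follows from $R\succ 0$ together with $\Pi_{22}(\wtd X)\succeq 0$, itself a consequence of $\Pi(\wtd X)\succeq 0$ for $\wtd X\succeq 0$ (cf. \eqref{eq:assumption_Pi}). Consequently $[R_{\rmc}(\wtd X)]^{-1}$ exists and is positive definite, so $G_{\rmc}(\wtd X)=B[R_{\rmc}(\wtd X)]^{-1}B^{\T}\succeq 0$ is well defined; writing $[R_{\rmc}(\wtd X)]^{-1}=MM^{\T}$ with $M$ invertible (e.g. $M=[R_{\rmc}(\wtd X)]^{-1/2}$) yields the factorization $G_{\rmc}(\wtd X)=\widehat B\widehat B^{\T}$ with $\widehat B=BM$, where $\widehat B$ and $B$ have the same column space.

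Next I would invoke the PBH characterization: the pair $(A_{\rmc}(\wtd X),G_{\rmc}(\wtd X))$ is stabilizable if and only if there is no nonzero $v\in\bbC^n$ and no $\lambda\in\bbC$ with $\operatorname{Re}\lambda\ge 0$ satisfying both $v^{\HH}A_{\rmc}(\wtd X)=\lambda v^{\HH}$ and $v^{\HH}G_{\rmc}(\wtd X)=0$. The heart of the argument is then a short computation. Suppose such $v,\lambda$ existed. From $v^{\HH}G_{\rmc}(\wtd X)v=v^{\HH}B[R_{\rmc}(\wtd X)]^{-1}B^{\T}v=0$ and the positive definiteness of $[R_{\rmc}(\wtd X)]^{-1}$ I conclude $B^{\T}v=0$, i.e. $v^{\HH}B=0$. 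Recalling $A_{\rmc}(\wtd X)=A-B[R_{\rmc}(\wtd X)]^{-1}[L_{\rmc}(\wtd X)]^{\T}$, the feedback term is annihilated on the left by $v$, so $v^{\HH}A_{\rmc}(\wtd X)=v^{\HH}A$, whence $v^{\HH}A=\lambda v^{\HH}$ with $v^{\HH}B=0$ and $\operatorname{Re}\lambda\ge 0$. This is precisely a PBH obstruction to the stabilizability of $(A,B)$, contradicting the hypothesis.

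Conceptually, the two lines of that computation encode the two invariances mentioned above: passing from $A$ to $A-B[R_{\rmc}(\wtd X)]^{-1}[L_{\rmc}(\wtd X)]^{\T}$ is a state feedback through $B$, which preserves stabilizability of the pair with input $B$, while replacing $B$ by $\widehat B=BM$ with $M$ invertible leaves the column space, and hence the PBH rank condition, unchanged. I therefore expect no serious obstacle. The only points requiring care are (i) confirming $R_{\rmc}(\wtd X)\succ 0$ so that all inverses and the factorization are legitimate, and (ii) aligning the definition of stabilizability of the \CARE\ pair $(A_{\rmc}(\wtd X),G_{\rmc}(\wtd X))$ used here (via any factorization $G_{\rmc}(\wtd X)=\widehat B\widehat B^{\T}$) with the control-theoretic stabilizability of $(A,B)$ assumed in the hypothesis, which is exactly what the implication $v^{\HH}G_{\rmc}(\wtd X)=0\Rightarrow v^{\HH}B=0$ supplies.
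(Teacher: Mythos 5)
Your proposal is correct and takes essentially the same route as the paper's proof: both apply the PBH left-eigenvector test to $\big(A_{\rmc}\big(\wtd X\big),G_{\rmc}\big(\wtd X\big)\big)$, deduce $v^{\HH}B=0$ from $v^{\HH}G_{\rmc}\big(\wtd X\big)=0$ using the positive definiteness of $\big[R_{\rmc}\big(\wtd X\big)\big]^{-1}$, observe that the feedback term in $A_{\rmc}\big(\wtd X\big)$ is then annihilated so that $v^{\HH}A=\lambda v^{\HH}$, and conclude by the assumed stabilizability of $(A,B)$. The extra remarks on feedback and input-transformation invariance are sound framing but do not change the argument.
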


\begin{proof}
Write $\wtd A=A_{\rmc}\big(\wtd X\big)$, $\wtd G=G_{\rmc}\big(\wtd X\big)$, $\wtd L=L_{\rmc}\big(\wtd X\big)$, and $\wtd R=R_{\rmc}\big(\wtd X\big)$.
It suffices to show that for any $\lambda\in\bbC$ with its real part $\Re(\lambda) \ge 0$ \cite[p.50]{zhdg:1995}
$$
\by\in\bbC^n,\,\by^{\HH} \left[ \wtd A-\lambda I,\,\wtd G\right]=\by^{\HH} \left[A - B \wtd R^{-1} \wtd L,\, B \wtd R^{-1} B^{\T}\right] = 0
\quad\Rightarrow\quad \by=0.
$$
In fact, $\by^{\HH}\wtd G = \by^{\HH} B \wtd R^{-1} B^{\T} = 0$ implies
$\by^{\HH}\wtd G\by = (\wtd R^{-1/2} B^{\T}\by)^{\HH} (\wtd R^{-1/2} B^{\T}\by) = 0$ which leads to
$\by^{\HH}B\wtd R^{-1/2}=0$, yielding $\by^{\HH}B=0$.
At the same time
$$
0=\by^{\HH} ( \wtd A-\lambda I)= \by^{\HH} (A - B \wtd R^{-1} \wtd L-\lambda I)=\by^{\HH} (A-\lambda I).
$$
Hence $\by^{\HH}[A-\lambda I,\,B]=0$ where $\Re(\lambda) \ge 0$, which implies $\by=0$ because $(A,B)$ is assumed stabilizable
\cite[p.50]{zhdg:1995}.
\end{proof}

\begin{lemma} \label{lm:detectable}
Suppose that
\begin{equation}\label{eq:ker-cond}
     \cN(Q - L R^{-1} L^{\T}) \subseteq  \cN(L)\bigcap\,\bigcap_{i=1}^r\cN(A_0^i).
\end{equation}
If $(Q - L R^{-1} L^{\T}, A)$ is detectable,
then, for any $\wtd X\succeq 0$,  $\big(H_{\rmc}\big(\wtd X\big), A_{\rmc}\big(\wtd X\big)\big)$ is detectable and $H_{\rmc}\big(\wtd X\big)\succeq 0$.
\end{lemma}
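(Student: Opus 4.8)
The plan is to prove the two conclusions — the semidefiniteness $H_{\rmc}(\wtd X)\succeq 0$ and the detectability of $\big(H_{\rmc}(\wtd X),A_{\rmc}(\wtd X)\big)$ — by working with the augmented block matrix $M(\wtd X):=\begin{bmatrix}Q & L\\ L^{\T}& R\end{bmatrix}+\Pi(\wtd X)$, whose Schur complement with respect to its $(2,2)$-block $R_{\rmc}(\wtd X)=R+\Pi_{22}(\wtd X)$ is, by \eqref{eq:SCARE-Hc}, exactly $H_{\rmc}(\wtd X)$. The semidefiniteness is the routine part: by \eqref{eq:quad-func-const} the unshifted block is $\succeq 0$ and by \eqref{eq:assumption_Pi} we have $\Pi(\wtd X)\succeq 0$, so $M(\wtd X)\succeq 0$; since also $R_{\rmc}(\wtd X)\succ 0$, the Schur complement of a positive semidefinite block matrix with positive-definite $(2,2)$-block is itself positive semidefinite, giving $H_{\rmc}(\wtd X)\succeq 0$.

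For detectability I would use the PBH test dual to the one employed in \Cref{lm:stabilizable}: it suffices to show that if $A_{\rmc}(\wtd X)\bx=\lambda\bx$ and $H_{\rmc}(\wtd X)\bx=0$ for some $\bx\in\bbC^n$ and $\lambda\in\bbC$ with $\Re(\lambda)\ge 0$, then $\bx=0$. The pivotal step is the reduction $H_{\rmc}(\wtd X)\bx=0\ \Rightarrow\ (Q-LR^{-1}L^{\T})\bx=0$. I would obtain it from the variational characterization $\bx^{\HH}H_{\rmc}(\wtd X)\bx=\min_{\bv}\begin{bmatrix}\bx\\ \bv\end{bmatrix}^{\HH}M(\wtd X)\begin{bmatrix}\bx\\ \bv\end{bmatrix}$, which, combined with $M(\wtd X)\succeq\begin{bmatrix}Q & L\\ L^{\T}& R\end{bmatrix}$, yields the monotonicity $H_{\rmc}(\wtd X)\succeq H_{\rmc}(0)=Q-LR^{-1}L^{\T}\succeq 0$. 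Hence $0=\bx^{\HH}H_{\rmc}(\wtd X)\bx\ge \bx^{\HH}(Q-LR^{-1}L^{\T})\bx\ge 0$ forces $\bx^{\HH}(Q-LR^{-1}L^{\T})\bx=0$, and positive semidefiniteness then gives $(Q-LR^{-1}L^{\T})\bx=0$.

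With $\bx\in\cN(Q-LR^{-1}L^{\T})$ in hand, I would invoke the kernel hypothesis \eqref{eq:ker-cond} (extended to complex vectors, which is harmless since all the matrices involved are real) to place $\bx$ in $\cN(L)\cap\bigcap_{i=1}^r\cN(A_0^i)$, i.e. $L^{\T}\bx=0$ and $A_0^i\bx=0$ for $i=1,\dots,r$. These two facts collapse the shifted drift term: by \eqref{eq:SCARE-1b} one has $\big[L_{\rmc}(\wtd X)\big]^{\T}\bx=L^{\T}\bx+\sum_{i=1}^r{B_0^i}^{\T}\wtd X A_0^i\bx=0$, so that from \eqref{eq:SCARE-Ac}, $A_{\rmc}(\wtd X)\bx=A\bx-B\big[R_{\rmc}(\wtd X)\big]^{-1}\big[L_{\rmc}(\wtd X)\big]^{\T}\bx=A\bx$. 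Therefore the eigen-relation reduces to $A\bx=\lambda\bx$, and together with $(Q-LR^{-1}L^{\T})\bx=0$ and $\Re(\lambda)\ge 0$, the detectability of $(Q-LR^{-1}L^{\T},A)$ forces $\bx=0$, completing the PBH verification.

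The main obstacle — indeed the only non-mechanical step — is the reduction $H_{\rmc}(\wtd X)\bx=0\Rightarrow(Q-LR^{-1}L^{\T})\bx=0$; everything afterward is bookkeeping driven by \eqref{eq:ker-cond}. If one prefers to avoid the monotonicity property of the Schur complement, an equivalent route is available: $\bx^{\HH}H_{\rmc}(\wtd X)\bx=0$ produces a $\bv$ with $M(\wtd X)\begin{bmatrix}\bx\\ \bv\end{bmatrix}=0$, and since $M(\wtd X)$ is a sum of two positive semidefinite matrices, each annihilates $\begin{bmatrix}\bx\\ \bv\end{bmatrix}$; reading off the top block of $\begin{bmatrix}Q & L\\ L^{\T}& R\end{bmatrix}\begin{bmatrix}\bx\\ \bv\end{bmatrix}=0$ after eliminating $\bv=-R^{-1}L^{\T}\bx$ again delivers $(Q-LR^{-1}L^{\T})\bx=0$.
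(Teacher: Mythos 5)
Your proof is correct, and its overall skeleton matches the paper's: both arguments reduce the PBH test for $\big(H_{\rmc}\big(\wtd X\big),A_{\rmc}\big(\wtd X\big)\big)$ to the PBH test for $(Q-LR^{-1}L^{\T},A)$ by showing that the test vector is annihilated by $Q-LR^{-1}L^{\T}$, after which \eqref{eq:ker-cond} collapses $A_{\rmc}\big(\wtd X\big)\bx$ to $A\bx$ and detectability of the original pair forces $\bx=0$. Where you genuinely deviate is the mechanism for that annihilation. The paper factorizes $Q-LR^{-1}L^{\T}=C^{\T}C$, writes $H_{\rmc}\big(\wtd X\big)=C^{\T}C+\wtd\Lambda$, and proves $\wtd\Lambda\succeq 0$ by a congruence transformation of $\Pi\big(\wtd X\big)+\begin{bmatrix}LR^{-1}L^{\T}&L\\L^{\T}&R\end{bmatrix}$; the vanishing of $\bz^{\HH}H_{\rmc}\big(\wtd X\big)\bz$ then kills the two PSD summands separately and yields $C\bz=0$. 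You instead prove the equivalent monotonicity $H_{\rmc}\big(\wtd X\big)\succeq H_{\rmc}(0)=Q-LR^{-1}L^{\T}$ via the variational (minimization) characterization of the Schur complement. This buys a few things: the claim $H_{\rmc}\big(\wtd X\big)\succeq 0$ falls out in the same stroke rather than as a separate computation; no factor $C$ is needed, since you run the PBH test with the output matrix $Q-LR^{-1}L^{\T}$ itself (equivalent because it has the same null space as $C$); and the same variational argument would also give a shorter proof of the paper's \Cref{lm:Omega-incr}, which the paper establishes by a more laborious perturbed-inverse argument. Two details you make explicit that the paper leaves tacit are worth keeping: the extension of the real kernel inclusion \eqref{eq:ker-cond} to complex PBH vectors, and the reading of $\cN(L)$ as $\{\bx:L^{\T}\bx=0\}$, which is the only interpretation under which \eqref{eq:ker-cond} typechecks when $m\ne n$ and under which the drift-collapse step (in the paper's own proof as well as yours) goes through.
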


\begin{proof}
Since $Q - L R^{-1} L^{\T}\succeq 0$ by (\ref{eq:quad-func-const}'), there is a matrix $C$ such that $C^{\T} C= Q - L R^{-1} L^{\T}$. We know that $\cN(C)=\cN(Q - L R^{-1} L^{\T})$, and $(Q - L R^{-1} L^{\T}, A)$ is detectable if and only if $(C, A)$ is detectable.

Write $\wtd A=A_{\rmc}\big(\wtd X\big)$, $\wtd G=G_{\rmc}\big(\wtd X\big)$, $\wtd L=L_{\rmc}\big(\wtd X\big)$,
$\wtd R=R_{\rmc}\big(\wtd X\big)$, and
$$
\wtd H=H_{\rmc}\big(\wtd X\big)
      =C^{\T} C+\underbrace{\big[Q-C^{\T} C+\Pi_{11}\big(\wtd X\big)\big] - \wtd L \wtd R^{-1}  \wtd L^{\T}}_{=:\wtd\Lambda}.
$$
We claim that $\wtd\Lambda\succeq 0$. This is because  $\Pi\big(\wtd X\big)\succeq 0$ and
$$
\begin{bmatrix}
Q-C^{\T} C & L \\ L^{\T} & R
\end{bmatrix}
=\begin{bmatrix}
 L R^{-1} L^{\T} & L \\ L^{\T} & R
\end{bmatrix}
=\begin{bmatrix}
   I & LR^{-1} \\
   0 & I
 \end{bmatrix}\begin{bmatrix}
                0 & 0 \\
                0 & R
              \end{bmatrix}\begin{bmatrix}
   I & LR^{-1} \\
   0 & I
 \end{bmatrix}^{\T}\succeq 0,
$$
and hence we get
     \begin{align*}
            0 \preceq\Pi\big(\wtd X\big) + \begin{bmatrix}
                 Q-C^{\T} C & L \\ L^{\T} & R
            \end{bmatrix} = \begin{bmatrix}
                 I & \wtd L \wtd R^{-1} \\ 0 & I
            \end{bmatrix} \begin{bmatrix}
                 \wtd\Lambda & 0 \\ 0 & \wtd R
            \end{bmatrix} \begin{bmatrix}
                 I & 0 \\ \wtd R^{-1} \wtd L^{\T} & I
            \end{bmatrix},
     \end{align*}
which implies that $\wtd\Lambda\succeq 0$   and $\wtd H = C^{\T} C + \wtd\Lambda \succeq 0$.
Now we are ready to show that $(\wtd H, \wtd A)$ is detectable. For that purpose, it suffices to show
that  for any $\lambda\in\bbC$ with its real part $\Re(\lambda) \ge 0$ \cite[p.52]{zhdg:1995}
$$
\bz\in\bbC^n,\,\,
\begin{bmatrix}
\wtd A-\lambda I \\
\wtd H
\end{bmatrix}\bz =0
\quad\Rightarrow\quad \bz=0.
$$
In fact $\wtd H \bz = 0$ implies $\bz^{\HH}\wtd H\bz=0$, i.e.,
\begin{align}\label{eq:detectable:pf-1}
     \bz^{\HH} C^{\T} C \bz + \bz^{\HH} \wtd\Lambda \bz = 0.
\end{align}
Hence, by \eqref{eq:detectable:pf-1},  we have $\bz^{\HH} C^{\T} C \bz=0$  and $\bz^{\HH} \wtd\Lambda \bz = 0$.
It follows from $\bz^{\HH} C^{\T} C \bz=0$ that $C \bz = 0$, which together with \eqref{eq:ker-cond} lead to
$A_0^i \bz = 0$ for $i = 1, \ldots, r$. Therefore, we have
$$
0=\big[\wtd A-\lambda I\big]\bz
 =\Big[A - B\wtd R^{-1} \Big(L+\sum_{i=1}^r {B_0^i}^{\T} \wtd X A_0^i\Big)\Big]\bz
 =\big[A-\lambda I\big]\bz
$$
and, together with $C\bz=0$, we get
$$
\begin{bmatrix}
A-\lambda I \\
C
\end{bmatrix}\bz =0
\quad\mbox{with $\Re(\lambda) \ge 0$},
$$
which implies $\bz=0$ because $(C, A)$ is assumed detectable.
\end{proof}


Define
\begin{align}
\Gamma(X):= \left[\begin{array}{cc|c}
              0&-A&-B\\
              -A^{\T}&Q_{\rmc}(X) & L_{\rmc}(X) \\ \hline
              -B^{\T}&\big[L_{\rmc}(X)\big]^{\T} & R_{\rmc}(X) \vextra
            \end{array}\right]
    =: \left[\begin{array}{c|c}
         \Gamma_{11}(X) & \Gamma_{12}(X) \\ \hline
         \big[\Gamma_{12}(X)\big]^{\T} & R_{\rmc}(X) \vextra
    \end{array}\right]  \in \bbR^{(2n+m)\times (2n+m)}, \label{eq:mtx_Gamma}
\end{align}
where $Q_{\rmc}(X)$, $L_{\rmc}(X)$, and $R_{\rmc}(X)$ are as in \eqref{eq:SCARE-1b}.
The Schur complement of $R_{\rmc}(X)$ in $\Gamma(X)$ is given by
\begin{align}\label{eq:Omega-dfn}
\Omega(X):=&\begin{bmatrix}
             0&-A\\
             -A^{\T}&Q_{\rmc}(X)
            \end{bmatrix}-
            \begin{bmatrix}
              -B\\
            L_{\rmc}(X)
            \end{bmatrix}
            \big[R_{\rmc}(X)\big]^{-1}
            \begin{bmatrix}
              -B\\
            L_{\rmc}(X)
            \end{bmatrix}^{\T}              \nonumber\\
   =& \begin{bmatrix}
        -B\big[R_{\rmc}(X)\big]^{-1}B^{\T}&-(A-B\big[R_{\rmc}(X)\big]^{-1} \big[L_{\rmc}(X)\big]^{\T})\\
     -(A^{\T}- L_{\rmc}(X)\big[R_{\rmc}(X)\big]^{-1}B^{\T})&Q_{\rmc}(X)- L_{\rmc}(X)\big[R_{\rmc}(X)\big]^{-1}\big[L_{\rmc}(X)\big]^{\T}  \end{bmatrix} \nonumber\\
     =&\begin{bmatrix}
        -G_{\rmc}(X) & -A_{\rmc}(X)\\
     -\big[A_{\rmc}(X)\big]^{\T} & H_{\rmc}(X)  \end{bmatrix},
\end{align}
where $A_{\rmc}(X)$, $G_{\rmc}(X)$ and $H_{\rmc}(X)$ are as in \eqref{eq:c-matrices}.

\begin{lemma} \label{lm:Omega-incr}
Let $\Omega(X)$ be defined as in \eqref{eq:Omega-dfn}. We have
\begin{align}\label{eq:increasing_Omega}
     \Omega(X)\succeq \Omega(Y)\text{ for } X\succeq Y\succeq 0.
\end{align}
\end{lemma}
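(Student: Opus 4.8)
The plan is to recognize $\Omega(X)$ as the Schur complement of the (uniformly positive definite) block $R_{\rmc}(X)$ in $\Gamma(X)$, and then to deduce the desired monotonicity from the monotonicity of $\Gamma(\cdot)$ together with the variational characterization of a Schur complement. The argument splits into two independent pieces: first, that $\Gamma(X)\succeq\Gamma(Y)$ whenever $X\succeq Y\succeq 0$; second, that the Schur-complement map is order-preserving on matrices whose pivot block stays positive definite.

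For the first piece I would simply subtract. Because $Q_{\rmc}(X)=Q+\Pi_{11}(X)$, $L_{\rmc}(X)=L+\Pi_{12}(X)$, and $R_{\rmc}(X)=R+\Pi_{22}(X)$, the constant blocks $A$, $B$, $Q$, $L$, $R$ cancel in $\Gamma(X)-\Gamma(Y)$, and the entire leading $n\times n$ row and column (the entries $0$, $-A$, $-A^{\T}$, $-B$, $-B^{\T}$) vanishes. What remains is
$$
\Gamma(X)-\Gamma(Y)=\begin{bmatrix} 0 & 0 \\ 0 & \Pi(X)-\Pi(Y)\end{bmatrix},
$$
where the trailing $(n+m)\times(n+m)$ block, in the ordering $(Q_{\rmc},L_{\rmc};L_{\rmc}^{\T},R_{\rmc})$, is exactly $\Pi(X)-\Pi(Y)$. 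By \eqref{eq:assumption_Pi} we have $\Pi(X)\succeq\Pi(Y)$, so this difference is positive semidefinite and hence $\Gamma(X)\succeq\Gamma(Y)$.

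For the second piece I would first note that $R_{\rmc}(Z)=R+\Pi_{22}(Z)\succ 0$ for every $Z\succeq 0$ (since $R\succ 0$ and $\Pi_{22}(Z)\succeq 0$), so both Schur complements are well defined, and then invoke the minimization formula: for the partition of $\Gamma(Z)$ with pivot $R_{\rmc}(Z)$ one has, for every $\bv\in\bbR^{2n}$,
$$
\bv^{\T}\Omega(Z)\bv=\min_{\bw\in\bbR^{m}}\begin{bmatrix}\bv\\ \bw\end{bmatrix}^{\T}\Gamma(Z)\begin{bmatrix}\bv\\ \bw\end{bmatrix}.
$$
Granting this, the inequality $\Gamma(X)\succeq\Gamma(Y)$ gives, for each fixed $\bv,\bw$, a pointwise inequality between the two quadratic forms, and taking the minimum over $\bw$ preserves it (if $f\ge g$ pointwise, then $\min f\ge\min g$). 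Hence $\bv^{\T}\Omega(X)\bv\ge\bv^{\T}\Omega(Y)\bv$ for all $\bv$, which is \eqref{eq:increasing_Omega}.

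The only genuinely delicate point is the second piece, because the pivot blocks $R_{\rmc}(X)$ and $R_{\rmc}(Y)$ differ, so one cannot merely appeal to ``same congruence, compare blocks.'' The variational formula is precisely what circumvents this: it expresses each Schur complement intrinsically, with no reference to the other pivot, so the order is transported through the minimization automatically. I would therefore either cite the standard monotonicity of the Schur complement on the positive semidefinite cone, or include the one-line completion-of-squares verification of the formula (minimizing $\bv^{\T}\Gamma_{11}(Z)\bv+2\bv^{\T}\Gamma_{12}(Z)\bw+\bw^{\T}R_{\rmc}(Z)\bw$ over $\bw$, with minimizer $\bw=-\big[R_{\rmc}(Z)\big]^{-1}\big[\Gamma_{12}(Z)\big]^{\T}\bv$), after which the conclusion is immediate.
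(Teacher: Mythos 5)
Your proposal is correct, and the first half (showing $\Gamma(X)\succeq\Gamma(Y)$ by cancelling the constant blocks and invoking $\Pi(X)\succeq\Pi(Y)$) is exactly what the paper does. Where you genuinely diverge is in transporting that inequality to the Schur complements. The paper compresses the leading $2n\times 2n$ block by a vector $\bw$ to form $\Gamma_{\bw}(\cdot)\in\bbH^{(1+m)\times(1+m)}$, shifts by a scalar $\alpha>0$ so that the scalar Schur complement $\bw^{\T}\Omega(Y)\bw+\alpha$ becomes positive, inverts the resulting positive definite matrices, reads off $\big[\bw^{\T}\Omega(\cdot)\bw+\alpha\big]^{-1}$ as the $(1,1)$ entry of the inverse, and then uses the antitonicity of matrix inversion ($0\prec A\preceq B\Rightarrow B^{-1}\preceq A^{-1}$) to compare; you instead invoke the variational characterization
$$
\bv^{\T}\Omega(Z)\bv=\min_{\bw\in\bbR^m}\begin{bmatrix}\bv\\ \bw\end{bmatrix}^{\T}\Gamma(Z)\begin{bmatrix}\bv\\ \bw\end{bmatrix},
$$
valid because the pivot $R_{\rmc}(Z)\succ 0$, and observe that a pointwise inequality of quadratic forms survives minimization over $\bw$. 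Your route is shorter and avoids both the $\alpha$-shift (needed in the paper precisely because $\bw^{\T}\Omega(Y)\bw$ can be negative, so one cannot invert directly) and any appeal to inverse monotonicity; the completion-of-squares verification of the variational formula you sketch is elementary and uses nothing beyond $R_{\rmc}(Z)\succ 0$. The paper's argument buys self-containedness at the level of block-matrix inverses but is noticeably more roundabout; conceptually both proofs establish the same standard fact, that the Schur complement with respect to a positive definite pivot is monotone in the Loewner order.
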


\begin{proof}
Suppose that $X\succeq Y\succeq 0$. By \eqref{eq:mtx_Gamma}, \eqref{eq:SCARE-1b} and \eqref{eq:SCARE-Pi}, we have
\begin{align*}
\Gamma(X)&=\left[\begin{array}{c|cc}
              0&-A&-B\\\hline
              -A^{\T}&Q & L \vextra \\
              -B^{\T}&L^{\T} & R
    \end{array}\right]+\left[\begin{array}{ccc}
              0&\cdots&0\\\hline
              A_0^{1\T}&\cdots & A_0^{r\T} \vextra \\
              B_0^{1\T}&\cdots & B_0^{r\T}
            \end{array}\right] (I_r\otimes X)\left[\begin{array}{c|cc}
              0&A_0^1&B_0^1\\
              \vdots&\vdots & \vdots \\
              0&A_0^r & B_0^{r}
            \end{array}\right]\\
            &\succeq\left[\begin{array}{c|cc}
              0&-A&-B\\\hline
              -A^{\T}&Q & L \vextra\\
              -B^{\T}&L^{\T} & R
    \end{array}\right]+\left[\begin{array}{ccc}
              0&\cdots&0\\\hline
              A_0^{1\T}&\cdots & A_0^{r\T} \vextra\\
              B_0^{1\T}&\cdots & B_0^{r\T}
            \end{array}\right] (I_r\otimes Y)\left[\begin{array}{c|cc}
              0&A_0^1&B_0^1\\
              \vdots&\vdots & \vdots \\
              0&A_0^r & B_0^{r}
            \end{array}\right]=\Gamma(Y).
\end{align*}
Since $\Gamma(X)\succeq \Gamma(Y)$, we have for any $\bw\in \bbR^{2n}$,
\begin{align}
\Gamma_{\bw}(X)&:=\begin{bmatrix}
                   \bw &  \\
                    & I_m
                 \end{bmatrix}^{\T}\Gamma(X)\begin{bmatrix}
                   \bw &  \\
                    & I_m
                 \end{bmatrix}
   =\begin{bmatrix}
        \bw^{\T}\Gamma_{11}(X)\bw &\bw^{\T}\Gamma_{12}(X)   \\
        \big[\Gamma_{12}(X)\big]^{\T}\bw  & R_{\rmc}(X)
    \end{bmatrix} \nonumber\\
  & \succeq \begin{bmatrix}
                   \bw &  \\
                    & I_m
                 \end{bmatrix}^{\T}\Gamma(Y)\begin{bmatrix}
                   \bw &  \\
                    & I_m
                 \end{bmatrix}
  =\begin{bmatrix}
        \bw^{\T}\Gamma_{11}(Y)\bw &\bw^{\T}\Gamma_{12}(Y)   \\
        \big[\Gamma_{12}(Y)\big]^{\T}\bw  & R_{\rmc}(Y)
    \end{bmatrix}
    =: \Gamma_{\bw}(Y). \label{eq:Omega-incr:pf-1}
\end{align}
It can be seen that $\bw^{\T}\Omega(X)\bw$ and $\bw^{\T}\Omega(Y)\bw$ are the Schur complement of $R_{\rmc}(X)$ in $\Gamma_{\bw}(X)$
and that of $R_{\rmc}(Y)$ in $\Gamma_{\bw}(Y)$, respectively.
Let $\alpha>0$ such that
$
        \bw^{\T}\Omega(Y)\bw+\alpha>0,
$
and define
\begin{align}\label{eq:Omega-incr:pf-2}
\Gamma_{\bw}^{\alpha}(X):=\Gamma_{\bw}(X)+\left[\begin{array}{cc}
            \alpha &0  \\
            0 & 0
        \end{array}\right],\quad
\Gamma_{\bw}^{\alpha}(Y):=\Gamma_{\bw}(Y)+\left[\begin{array}{cc}
            \alpha &0  \\
            0 & 0
        \end{array}\right].
\end{align}
It can also be seen that $\bw^{\T}\Omega(Y)\bw+\alpha$ is the Schur complement of $R_{\rmc}(Y)$ in $\Gamma_{\bw}^{\alpha}(Y)$.
Since $R_{\rmc}(Y)\succ 0$ and $\bw^{\T}\Omega(Y)\bw+\alpha>0$, it follows from \eqref{eq:Omega-incr:pf-1} that
    $\Gamma_{\bw}^{\alpha}(X)\succeq \Gamma_{\bw}^{\alpha}(Y)\succ 0$. Hence,
\begin{align}\label{eq:Omega-incr:pf-3}
        0\prec \big[\Gamma_{\bw}^{\alpha}(X)\big]^{-1}\preceq \big[\Gamma_{\bw}^{\alpha}(Y)\big]^{-1}.
\end{align}
From \eqref{eq:Omega-incr:pf-2}, we see that the $(1,1)$st entries
of $\big[\Gamma_{\bw}^{\alpha}(X)\big]^{-1}$ and $\big[\Gamma_{\bw}^{\alpha}(Y)\big]^{-1}$
are $\big[\bw^{\T}\Omega(X)\bw+\alpha\big]^{-1}$ and $\big[\bw^{\T}\Omega(Y)\bw+\alpha\big]^{-1}$, respectively,
yielding, by \eqref{eq:Omega-incr:pf-3}, that
$$
0<\big[\bw^{\T}\Omega(X)\bw+\alpha\big]^{-1}\le \big[\bw^{\T}\Omega(Y)\bw+\alpha\big]^{-1}
$$
for any $\bw \in \bbR^{2n}$, and hence
$$
\bw^{\T}\Omega(X)\bw+\alpha\ge \bw^{\T}\Omega(Y)\bw+\alpha
\,\Rightarrow\,
\bw^{\T}\Omega(X)\bw\ge \bw^{\T}\Omega(Y)\bw
\,\Rightarrow\,
\Omega(X)\succeq \Omega(Y),
$$
as was to be shown.
\end{proof}

In terms of $\Omega(\cdot)$, \SCARE\ \eqref{eq:SCARE-0} has a new formulation.

\begin{lemma}\label{lm:SCARE-equiv}
Let $\Omega(X)$ be defined as in \eqref{eq:Omega-dfn}. \SCARE\ \eqref{eq:SCARE-0} can be reformulated as
\begin{align}\label{eq:SCARE-2}
    \begin{bmatrix} X &  -I \end{bmatrix}
    \Omega(X)
    \begin{bmatrix}
        X\\
     -I  \end{bmatrix}=0.
\end{align}
\end{lemma}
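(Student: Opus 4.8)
The plan is to recognize that the entire statement is a one-line verification once we invoke the block representation of $\Omega(X)$ that was already derived in the third equality of \eqref{eq:Omega-dfn}, namely
\[
\Omega(X) = \begin{bmatrix} -G_{\rmc}(X) & -A_{\rmc}(X)\\ -\big[A_{\rmc}(X)\big]^{\T} & H_{\rmc}(X) \end{bmatrix}.
\]
With this in hand the left-hand side of \eqref{eq:SCARE-2} is nothing but the quadratic form of $\Omega(X)$ in the $2n\times n$ ``block vector'' $\big[\begin{smallmatrix} X\\ -I\end{smallmatrix}\big]$, and I would simply carry out the block multiplication and match the result against the equivalent \CARE\ form $\scrR(X)$ in \eqref{eq:SCARE-CARE-1}.

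Concretely, I would first compute the inner product
\[
\Omega(X)\begin{bmatrix} X\\ -I\end{bmatrix}
=\begin{bmatrix} -G_{\rmc}(X)\,X + A_{\rmc}(X)\\[2pt] -\big[A_{\rmc}(X)\big]^{\T} X - H_{\rmc}(X)\end{bmatrix},
\]
and then left-multiply by $\begin{bmatrix} X & -I\end{bmatrix}$, collecting the two $n\times n$ contributions to obtain
\[
\begin{bmatrix} X & -I\end{bmatrix}\Omega(X)\begin{bmatrix} X\\ -I\end{bmatrix}
= -X G_{\rmc}(X) X + X A_{\rmc}(X) + \big[A_{\rmc}(X)\big]^{\T} X + H_{\rmc}(X).
\]
The right-hand side is exactly $\scrR(X)$ as written in \eqref{eq:SCARE-CARE-1}. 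Since \eqref{eq:SCARE-CARE-1} was already established to be equivalent to \SCARE\ \eqref{eq:SCARE-0}, the quantity in \eqref{eq:SCARE-2} vanishes precisely when $\scrR(X)=0$, which is the claim.

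I do not expect any genuine obstacle here: the computation is a routine block product, and the only care needed is sign bookkeeping in the $\begin{bmatrix}X & -I\end{bmatrix}$ factors and making sure the block form of $\Omega(X)$ being used is the final (third) line of \eqref{eq:Omega-dfn} rather than the unsimplified Schur-complement expression. In effect, all the substantive work — rewriting $\Omega(X)$ in terms of $A_{\rmc}(X)$, $G_{\rmc}(X)$, $H_{\rmc}(X)$, and establishing the equivalence of \eqref{eq:SCARE-CARE-1} with \SCARE\ — has already been done upstream, so this lemma is a direct bookkeeping consequence.
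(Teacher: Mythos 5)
Your proof is correct and is exactly the verification the paper has in mind: the paper states \Cref{lm:SCARE-equiv} without proof, treating it as an immediate consequence of the block form of $\Omega(X)$ in the last line of \eqref{eq:Omega-dfn} together with the already-stated equivalence of \eqref{eq:SCARE-CARE-1} with \SCARE\ \eqref{eq:SCARE-0}. Your block multiplication reproduces $\scrR(X)$ with the correct signs, so nothing is missing.
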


\section{Fixed-point iteration via \SDA} \label{sec:FP-CARE-SDA}
In this section, we propose our main algorithm that uses the structured-preserving doubling algorithm (\SDA) outlined in \Cref{alg:CARE-SDA} of \Cref{sec:CARE-SDA} as its workhorse
to solve \SCARE\ \eqref{eq:SCARE-0}. The basic idea is actually very simple. Recall its equivalent form
\eqref{eq:SCARE-CARE-1} of \eqref{eq:SCARE-0}, which appears in the form of \CARE\ \eqref{eq:CARE}, except
the defining coefficient matrices are solution-dependent. Hence, the fixed-point strategy naturally applies.
Given an approximate solution to \eqref{eq:SCARE-CARE-1}, we evaluate and freeze its coefficient matrices at the given approximation
to yield a \CARE\ which is then solved by \SDA\ for hopefully a better approximate solution.
The process repeats itself until convergence. This new method combines the fixed-point strategy and \SDA. For that reason, we name
the method
the {\em fixed-point iteration via \SDA} (\FP\SDA).

The key question is whether the process is convergent, and if it does, what is being converged to.
Under Assumptions~\ref{asm:stab} and \ref{asm:dete}, \eqref{eq:quad-func-const},
\eqref{eq:ker-cond}, and that $(A,B)$ is stabilizable and $(Q - L R^{-1} L^{\T}, A)$ is detectable, we will show that
the process indeed
converges monotonically to the stabilizing solution of \SCARE\ \eqref{eq:SCARE-0}.

\subsection{\FP\SDA}
Given an approximate solution $X_k$ to \eqref{eq:SCARE-CARE-1},
we freeze its solution-dependent coefficient matrices at $X=X_k$ to
get the following \CARE
\begin{subequations}\label{eq:FPSCARE(k)}
\begin{equation}\label{eq:FPSCARE(k)-eq}
      A_k^{\T} X + X A_k - X G_k X + H_k = 0,
\end{equation}
where
\begin{equation}\label{eq:FPSCARE(k)-coeffs}
      R_k = R_{\rmc}(X_k), \quad G_k = G_{\rmc}(X_k), \quad A_k = A_{\rmc}(X_k), \quad H_k = H_{\rmc}(X_k),
\end{equation}
\end{subequations}
and then we call \Cref{alg:CARE-SDA} to solve \eqref{eq:FPSCARE(k)} for its stabilizing solution $X_{k+1}$, if
it exists. This is our \FP\SDA\ as detailed in \Cref{alg:SCARE-SDA}.
In order for  \CARE\ \eqref{eq:FPSCARE(k)} to be well-defined, we need $R_k$ to be invertible because $G_k$ traces back to \eqref{eq:SCARE-Gc}
which involves $R_k^{-1}$. Indeed $R_k\succ 0$ and thus invertible if $X_k\succeq 0$. This is because if $X_k\succeq 0$, then
$$
R_k =R+\Pi_{22}(X_k)\succeq R\succ 0, \quad
G_k=BR_k^{-1}B^{\T}\succeq 0.
$$

At line 3 of \Cref{alg:SCARE-SDA}
the normalized residual $\NRes(\cdot)$ of \eqref{eq:SCARE-0}, at an approximation
solution $\wtd X$,
\begin{equation}\label{eq:NRes}
\NRes\big(\wtd X\big)
  :=\frac {\|\scrR\big(\wtd X\big)\|_{\F}}
          {2\|A\|_{\F}\,\|\wtd X\|_2+\|Q\|_{\F}+\|\Pi_{11}\big(\wtd X\big)\|_{\F}
                    +\big\|\wtd XB+L_{\rmc}\big(\wtd X\big)\big\|_2^2\big\|\big[R_{\rmc}\big(\wtd X\big)\big]^{-1}\big\|_{\F}},
\end{equation}
is used to measure approximation accuracy and to stop the iterative process if
$\NRes(X_k)\le\epsilon$.
The denominator in \eqref{eq:NRes} is the rough scaling factor such that if $\wtd X$ is rounded from the exact solution
of \SCARE\ \eqref{eq:SCARE-0},  $\NRes\big(\wtd X\big)$ is around the magnitude of the unit machine roundoff.
Here we primarily use the Frobenius norm for computational convenience but we also see the use of the spectral norm.
For the latter, we may use easily computable $\sqrt{\|\wtd X\|_1\|\wtd X\|_{\infty}}$ in place of $\|\wtd X\|_2$, in part because of
$$
\frac 1{\sqrt n}\sqrt{\|\wtd X\|_1\|\wtd X\|_{\infty}}\le\|\wtd X\|_2\le \sqrt{\|\wtd X\|_1\|\wtd X\|_{\infty}},
$$
and similarly for $\big\|\wtd XB+L_{\rmc}\big(\wtd X\big)\big\|_2$.

\begin{algorithm}[t]
\caption{\FP\SDA\ for solving \SCARE\ \eqref{eq:SCARE-0} } \label{alg:SCARE-SDA}
\begin{algorithmic}[1]
    \REQUIRE $A, Q \in \bbR^{n \times n}$, $B, L  \in \bbR^{n \times m}$, $R = R^{\T} \in \bbR^{m \times m}$, $A_0^i \in \bbR^{n \times n}$, $B_0^i \in \bbR^{n \times m}$ for $i = 1, \ldots, r$, and  tolerance $\varepsilon$, and initial approximate $X_0^{\T} = X_0\succeq 0$ if known;
    \ENSURE $X_*$, the last $X_k$, as the computed solution to \eqref{eq:SCARE-0}.
        \hrule\vspace{1ex}
    \STATE $k = 0$;
    \STATE if no initial $X_0$ is provided, set $X_0=0$;
    \WHILE{$\NRes(X_k)>\varepsilon$}
        \STATE $L_k = L + \Pi_{12}(X_k)$,  $R_k = R+\Pi_{22}(X_k)$, $Q_k = Q + \Pi_{11}(X_k)$;
        \STATE $A_k = A - B R_k^{-1} L_k^{\T}$, $G_k = B R_k^{-1} B^{\T}$, $H_k = Q_k - L_k R_k^{-1} L_k^{\T}$;
        \STATE solve \CARE\
               $ 
                     (A_k-G_kX_k)^{\T} Z + Z (A_k-G_kX_k)  - Z G_k  Z + \scrR(X_k)  = 0
               $ 
               for its stabilizing solution  $Z$ by \SDA\ (\Cref{alg:CARE-SDA});
        \STATE $X_{k+1}=X_k+Z$; 
        \STATE $k = k + 1$;
    \ENDWHILE
    \RETURN last $X_k$ as the computed solution.
\end{algorithmic}
\end{algorithm}

\begin{remark}\label{rk:SCARE-SDA}
There are two comments for efficiently implementing \Cref{alg:SCARE-SDA}.
\begin{enumerate}[(1)]
  \item \Cref{alg:SCARE-SDA} is an inter-outer iterative scheme. Its inner iteration is hidden at its line 6 where \Cref{alg:CARE-SDA}
        is called to solve \eqref{eq:FPSCARE(k)} for the difference $X_{k+1}-X_k$. This will result in a more accurate implementation than computing $X_{k+1}$ directly. Here is why.
        As the outer iteration progresses, $X_k$ gradually becomes more and more accurate as an approximation
        to the stabilizing solution of \SCARE\ \eqref{eq:SCARE-0}. Hence $X_{k+1}$ becomes more and more close to $X_k$, and so
        it will be numerically appealing to solve \eqref{eq:FPSCARE(k)} for the difference $Z=X_{k+1}-X_k$. Plugging in $X_{k+1}=X_k+Z$
        to \eqref{eq:FPSCARE(k)} yields the \CARE\ at line 6 there.
  \item The goal of the algorithm is to compute the stabilizing solution of \SCARE\  \eqref{eq:SCARE-0}. The solution to
        \eqref{eq:FPSCARE(k)}, no matter how accurate it is computed, is unlikely to be the one to \eqref{eq:SCARE-0}. Hence
        there is no need to calculate $Z$ at line 6 more accurately than necessary but just enough to update $X_k$, e.g., making
        $X_{k+1}$ a few bits more accurate than $X_k$ as an approximation to the solution of \eqref{eq:SCARE-0}. In general,
        it is hard to know exactly how many more bits accurately is enough. What we do in our current implementation is as follows.
        Denote by $Z_i$ ($i=0,1,\ldots$) the approximations produced by \Cref{alg:CARE-SDA} when it
        is applied to solve $(A_k-G_kX_k)^{\T} Z + Z (A_k-G_kX_k)  - Z G_k  Z + \scrR(X_k)  = 0$ for its stabilizing solution $Z$. We stop the \SDA\ iteration as soon as
        \begin{equation}\label{eq:innerSTOP-FTSDA}
        \|(A_k-G_kX_k)^{\T} Z_i + Z_i (A_k-G_kX_k)  - Z_i G_k  Z_i + \scrR(X_k)\|_{\F}
          \le\tau\cdot\|\scrR(X_k)\|_{\F},
        \end{equation}
        where $0<\tau<1$ is preselected error-reducing factor, as we discussed in \Cref{sec:CARE-SDA}. In our experiments, $\tau=1/8$
        and it works quite well for us.
\end{enumerate}
\end{remark}



\subsection{Convergence Analysis}\label{ssec:conv-FPSDA}
There are a couple of questions about \Cref{alg:SCARE-SDA}  remaining: 1) does \SDA\ at line 6 run without any breakdown? 2) what is its convergence behavior? We will answer these question in this subsection.

We will assume that
\begin{equation}\label{eq:assume-always}
\framebox{
\parbox{9cm}{
Assumptions~\ref{asm:stab} and \ref{asm:dete}, \eqref{eq:quad-func-const},
and \eqref{eq:ker-cond}  hold, and
$(A,B)$ is stabilizable and $(Q - L R^{-1} L^{\T}, A)$ is detectable.
}
}
\end{equation}
Recall that Assumptions~\ref{asm:stab} and \ref{asm:dete} enures that \SCARE\ \eqref{eq:SCARE-0} has a unique \PSD\ solution, which is also
a stabilizing solution. Denote by $X_*$ the \PSD\ solution of \SCARE\ \eqref{eq:SCARE-0}.


Our analysis below will repeatedly call upon the following well-known result, where we have abused notations $A$ and $Q$ as two
generic matrices to state this general fact, whereas everywhere else in this paper, they are tied up with
the targeted \SCARE\ \eqref{eq:SCARE-0} of our focus in this paper. Unlikely, this will cause any confusion.

\begin{lemma}[{\cite{laro:1995}}] \label{lm:LyapnovEq-PSD-prop}
    Let $A,\,Q\in \bbR^{n\times n}$ and suppose that  $A$ is stable, i.e., $\eig(A)\in\bbC_-$, and $Q\preceq 0$. Then
    Lyapunov's equation $A^{\T}X+XA=Q$ has a unique solution $X$ and, moreover, the solution is \PSD.
\end{lemma}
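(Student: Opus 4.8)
The plan is to split the claim into two independent parts: first establish existence and uniqueness of the solution by a purely linear-algebraic argument, and then exhibit an explicit integral formula for the solution from which positive semidefiniteness can be read off directly.

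For uniqueness I would vectorize the equation. Using the identity $\vex(AXB)=(B^{\T}\otimes A)\vex(X)$, the Lyapunov equation $A^{\T}X+XA=Q$ becomes the linear system $(I_n\otimes A^{\T}+A^{\T}\otimes I_n)\vex(X)=\vex(Q)$. The spectrum of the Kronecker sum $I_n\otimes A^{\T}+A^{\T}\otimes I_n$ consists exactly of the pairwise sums $\lambda_i+\lambda_j$, where $\lambda_1,\dots,\lambda_n$ are the eigenvalues of $A$. Since $A$ is stable, every $\Re(\lambda_i)<0$, so each $\lambda_i+\lambda_j$ has strictly negative real part and is in particular nonzero; hence the coefficient matrix is nonsingular and the equation has a unique solution $X$.

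For existence together with the \PSD\ property I would exhibit the solution directly as
\[
X=\int_0^{\infty} e^{A^{\T}t}(-Q)\,e^{At}\,\dd t .
\]
Stability of $A$ guarantees that $\|e^{At}\|$ decays exponentially, so the integral converges absolutely. Differentiating the integrand and using the fundamental theorem of calculus gives
\[
A^{\T}X+XA=\int_0^{\infty}\frac{\dd}{\dd t}\Big[e^{A^{\T}t}(-Q)\,e^{At}\Big]\,\dd t=\Big[e^{A^{\T}t}(-Q)\,e^{At}\Big]_0^{\infty}=Q,
\]
so this $X$ solves the equation, and by the uniqueness already established it is \emph{the} solution. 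Finally, because $Q\preceq 0$ we have $-Q\succeq 0$, so each integrand $e^{A^{\T}t}(-Q)\,e^{At}$ is a congruence transform of a \PSD\ matrix and is therefore itself \PSD; as a convergent limit of \PSD\ Riemann sums, the integral $X$ is \PSD\ as well.

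This argument is essentially routine, so there is no serious obstacle; the only points requiring care will be justifying the convergence of the improper integral and the differentiation-under-the-integral step, together with the vanishing of the boundary term at $t=\infty$. Each of these follows immediately from the exponential decay of $e^{At}$ supplied by the stability of $A$, so I expect the whole proof to be short.
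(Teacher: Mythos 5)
Your proof is correct. The paper itself gives no proof of this lemma --- it is quoted directly from the cited reference \cite{laro:1995} --- and your argument (uniqueness via the nonsingularity of the Kronecker sum $I_n\otimes A^{\T}+A^{\T}\otimes I_n$, whose eigenvalues $\lambda_i+\lambda_j$ all have negative real part, plus existence and positive semidefiniteness via the integral representation $X=\int_0^{\infty}e^{A^{\T}t}(-Q)\,e^{At}\,\dd t$) is precisely the classical textbook proof that the citation stands for, so there is nothing to reconcile.
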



For the sake of analysis, we will assume that $X_{k+1}$ satisfies \eqref{eq:FPSCARE(k)} exactly. Hence our convergence analysis is only indicative as far as the actual behavior of the algorithm in
actual computations is concerned. We point out such an assumption on the inner iteration being exact is not uncommon for analyzing the convergence of
an inner-outer iterative scheme in the literature.

\Cref{thm:SCARE2CARE} below says that \Cref{alg:SCARE-SDA} will be able to generate a sequence $\{X_k\}_{k=0}^{\infty}$, given initial $X_0\succeq 0$. Whether the sequence converges or not is handled in \Cref{{thm:FPSDA-mono-incr},thm:FPSDA-mono-decr} later.

\begin{theorem}\label{thm:SCARE2CARE}
Assume \eqref{eq:assume-always}.
In \Cref{alg:SCARE-SDA} if $X_0\succeq 0$, then the following statements hold.
\begin{enumerate}[{\rm (a)}]
  \item $X_k\succeq 0$ for each $k$;
  \item \CARE\ \eqref{eq:FPSCARE(k)}, for each $k$, has a unique stabilizing solution;
  \item \SDA\ (\Cref{alg:CARE-SDA})  on \CARE\ \eqref{eq:FPSCARE(k)} runs without any breakdowns and is quadratically convergent
        for each $k$.
\end{enumerate}
\end{theorem}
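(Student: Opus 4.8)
The plan is to argue by induction on $k$, maintaining the invariant $X_k\succeq 0$ and, at each step, checking that \CARE\ \eqref{eq:FPSCARE(k)-eq} satisfies the classical hypotheses under which a \CARE\ of the form $A^{\T}X+XA-XGX+H=0$ has a unique stabilizing solution that is \PSD\ and under which \Cref{alg:CARE-SDA} converges. The base case is the standing hypothesis $X_0\succeq 0$. In the inductive step I assume $X_k\succeq 0$ and must produce the structural properties of the frozen coefficients $A_k,G_k,H_k$, then read off (b), (c), and the invariant for $k+1$.

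First I would record the four properties of the coefficients in \eqref{eq:FPSCARE(k)-coeffs}. Since $X_k\succeq 0$, we have $R_k=R+\Pi_{22}(X_k)\succeq R\succ 0$, hence $R_k$ is invertible and $G_k=BR_k^{-1}B^{\T}\succeq 0$. Because $(A,B)$ is stabilizable by \eqref{eq:assume-always}, \Cref{lm:stabilizable} with $\wtd X=X_k$ gives that $(A_k,G_k)=\big(A_{\rmc}(X_k),G_{\rmc}(X_k)\big)$ is stabilizable. Because $(Q-LR^{-1}L^{\T},A)$ is detectable and \eqref{eq:ker-cond} holds, \Cref{lm:detectable} with $\wtd X=X_k$ gives that $(H_k,A_k)=\big(H_{\rmc}(X_k),A_{\rmc}(X_k)\big)$ is detectable and that $H_k\succeq 0$. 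Thus \eqref{eq:FPSCARE(k)-eq} is a \CARE\ with $G_k\succeq 0$, $H_k\succeq 0$, $(A_k,G_k)$ stabilizable, and $(H_k,A_k)$ detectable.

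These are exactly the standard conditions guaranteeing that \eqref{eq:FPSCARE(k)-eq} has a unique stabilizing solution, which is moreover the maximal solution and \PSD; this is statement (b) together with the \PSD-ness of that solution. For statement (a) at index $k+1$ I would use that substituting $X=X_k+Z$ into \eqref{eq:FPSCARE(k)-eq}, together with $\scrR(X_k)=A_k^{\T}X_k+X_kA_k-X_kG_kX_k+H_k$, turns it into precisely the shifted \CARE\ for $Z$ solved at line~6 of \Cref{alg:SCARE-SDA}; hence the stabilizing $Z$ returned there makes $A_k-G_kX_{k+1}$ stable, so $X_{k+1}=X_k+Z$ is the stabilizing solution of \eqref{eq:FPSCARE(k)-eq} and is therefore \PSD, which closes the induction. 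Statement (c) then follows by applying the \SDA\ convergence theory for \CARE\ recalled in \Cref{sec:CARE-SDA} to \eqref{eq:FPSCARE(k)-eq}: under the same four conditions, \Cref{alg:CARE-SDA} is free of breakdown and converges quadratically to the stabilizing solution.

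I expect the real content to live entirely in \Cref{lm:stabilizable,lm:detectable}, so the task here is organizational: assembling their conclusions into the precise hypothesis list demanded by the \CARE\ existence theorem and the \SDA\ convergence theorem, and, most importantly, keeping the invariant $X_k\succeq 0$ in hand at every step, since nonnegativity is exactly what both lemmas take as input and what \PSD-ness of the stabilizing solution regenerates as output. The one point I would be careful to spell out is the consistency between the stated object --- \SDA\ run on \eqref{eq:FPSCARE(k)-eq} --- and the shifted equation for $Z$ actually solved in \Cref{alg:SCARE-SDA}, noting that the two \CARE s share the same associated Hamiltonian up to the symplectic similarity induced by $X=X_k+Z$, so that the quadratic convergence pertains to the computation performed.
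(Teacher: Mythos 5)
Your proposal is correct and follows essentially the same route as the paper: induction maintaining $X_k\succeq 0$, feeding \Cref{lm:stabilizable,lm:detectable} into \Cref{thm:CARE} to get the unique stabilizing \PSD\ solution of \eqref{eq:FPSCARE(k)}, with the \SDA\ theory of \Cref{sec:CARE-SDA} supplying item (c). Your extra care about the equivalence between \eqref{eq:FPSCARE(k)-eq} and the shifted equation for $Z=X_{k+1}-X_k$ solved at line~6 is sound (the two Hamiltonians are similar via $\left[\begin{smallmatrix} I & 0 \\ X_k & I\end{smallmatrix}\right]$); the paper handles that same point in \Cref{rk:SCARE-SDA} rather than inside the proof.
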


\begin{proof}
We prove that $X_k\succeq 0$ for all $k$ by induction on $k$, and along the way the results in items (b) and (c) are proved as by-products.
Consider $k=0$. By assumption,  $X_0\succeq 0$. Then $(A_0,G_0)$ is stabilizable and $(H_0,A_0)$ is detectable by \Cref{lm:stabilizable,lm:detectable}. Hence,
by \Cref{thm:CARE},
\CARE\ \eqref{eq:FPSCARE(k)} for $k=0$
has a unique stabilizing solution, which is also \PSD.
At convergence, \Cref{alg:CARE-SDA} computes it, which will be denoted by $X_1\succeq 0$ as in the algorithm.
Suppose that $X_k\succeq 0$ for $k=\ell$. Repeat the same argument,
we find that \CARE\ \eqref{eq:FPSCARE(k)} for $k=\ell$ has a unique stabilizing solution, which is also \PSD,
and, at convergence, \Cref{alg:CARE-SDA} computes it too, which is $X_{\ell+1}\succeq 0$ as defined in the algorithm.
\end{proof}





In both \Cref{thm:FPSDA-mono-incr,thm:FPSDA-mono-decr} below, we will show that, with initial $X_0=0$, $X_k$ is monotonically increasing and convergent to $X_{*}$, and, on the other hand,
with initial $X_0\succeq X_*$ such that $\scrR(X_0)\preceq 0$,
$X_k$ is monotonically decreasing and convergent to $X_{*}$.


\begin{theorem} \label{thm:FPSDA-mono-incr}
Assume \eqref{eq:assume-always}.
In \Cref{alg:SCARE-SDA} if $X_0=0$, then the following statements hold.
\begin{enumerate}[{\rm (i)}]
    \item $0=X_0\preceq X_1\preceq\cdots\preceq X_k\preceq X_*$, $\scrR(X_k)\succeq 0$, and $\eig(A_k-G_kX_*)\subset\bbC_-$
    for all $k\ge 0$;
    \item $0\preceq\lim_{k\rightarrow \infty}X_k=X_{*}$ and $\eig\left(A_{\rmc}(X_{*})-G_{\rmc}(X_{*})X_{*}\right)\subset \mathbb{C}_-\bigcup \ibbR$.
\end{enumerate}
\end{theorem}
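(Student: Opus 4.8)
The plan is to prove part (i) by induction on $k$, carrying the hypothesis $0 \preceq X_k \preceq X_*$ together with $\scrR(X_k) \succeq 0$, and to extract the monotonicity and the eigenvalue inclusion as by-products of the inductive step. Throughout I work with the frozen Riccati operator $\scrR_k(W) := \begin{bmatrix} W & -I\end{bmatrix}\Omega(X_k)\begin{bmatrix} W\\ -I\end{bmatrix} = A_k^{\T} W + W A_k - W G_k W + H_k$, obtained by freezing $\Omega(\cdot)$ of \eqref{eq:Omega-dfn} at $X_k$. By \Cref{lm:SCARE-equiv} one has $\scrR(W) = \begin{bmatrix} W & -I\end{bmatrix}\Omega(W)\begin{bmatrix} W\\ -I\end{bmatrix}$, so $\scrR_k(X_k) = \scrR(X_k)$, and by \Cref{thm:SCARE2CARE} the iterate $X_{k+1}$ is exactly the stabilizing solution of $\scrR_k(W)=0$, i.e. $\scrR_k(X_{k+1})=0$ with $\hat A_k := A_k - G_k X_{k+1}$ stable. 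The base case $k=0$ is immediate: $X_0=0$ gives $0\preceq X_0\preceq X_*$ (as $X_*\succeq 0$) and $\scrR(X_0)=H_{\rmc}(0)\succeq 0$ by \Cref{lm:detectable}.

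For the inductive step, assume $0\preceq X_k\preceq X_*$ and $\scrR(X_k)\succeq 0$. I establish three facts. \emph{Monotone increase} $X_{k+1}\succeq X_k$: subtracting $\scrR_k(X_k)=\scrR(X_k)$ from $\scrR_k(X_{k+1})=0$ and completing the square in $Z:=X_{k+1}-X_k$ yields the Lyapunov identity $\hat A_k^{\T} Z + Z\hat A_k = -Z G_k Z - \scrR(X_k)$ (this is precisely the \CARE\ at line~6 of \Cref{alg:SCARE-SDA}); since $G_k\succeq 0$ and $\scrR(X_k)\succeq 0$ the right side is $\preceq 0$, so \Cref{lm:LyapnovEq-PSD-prop} with $\hat A_k$ stable gives $Z\succeq 0$, hence $X_{k+1}\succeq X_k\succeq 0$. \emph{Residual sign} $\scrR(X_{k+1})\succeq 0$: from $\scrR_k(X_{k+1})=0$, $\scrR(X_{k+1})=\begin{bmatrix} X_{k+1} & -I\end{bmatrix}\big(\Omega(X_{k+1})-\Omega(X_k)\big)\begin{bmatrix} X_{k+1}\\ -I\end{bmatrix}\succeq 0$, using $X_{k+1}\succeq X_k\succeq 0$ and \Cref{lm:Omega-incr}. \emph{Upper bound} $X_{k+1}\preceq X_*$: with $W:=X_*-X_{k+1}$ and the closed-loop matrix $\wtd A_k:=A_k-G_k X_*$, completing the square gives $\wtd A_k^{\T} W + W\wtd A_k = \scrR_k(X_*) - W G_k W$, where $\scrR_k(X_*)=\begin{bmatrix} X_* & -I\end{bmatrix}\big(\Omega(X_k)-\Omega(X_*)\big)\begin{bmatrix} X_*\\ -I\end{bmatrix}\preceq 0$ by \Cref{lm:Omega-incr} (using $X_k\preceq X_*$); both terms on the right being $\preceq 0$, \Cref{lm:LyapnovEq-PSD-prop} yields $W\succeq 0$ once $\wtd A_k$ is known stable. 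This closes the induction.

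The crux — and the step I expect to be the main obstacle — is to prove $\eig(\wtd A_k)=\eig(A_k-G_kX_*)\subset\bbC_-$, which is both asserted in (i) and needed for the upper bound. I would rewrite the frozen equation around $\wtd A_k$ as $\wtd A_k^{\T} X_* + X_*\wtd A_k = -\Xi_k$ with $\Xi_k := -\scrR_k(X_*) + X_* G_k X_* + H_k$; all three summands are $\succeq 0$ (the first by \Cref{lm:Omega-incr}, the third by $H_k=H_{\rmc}(X_k)\succeq 0$ from \Cref{lm:detectable}), so $\Xi_k\succeq 0$. If $\wtd A_k v=\lambda v$ with $v\ne 0$ and $\Re(\lambda)\ge 0$, multiplying by $v^{\HH},v$ gives $2\Re(\lambda)\,v^{\HH}X_* v = -v^{\HH}\Xi_k v$; the left side is $\ge 0$ and the right $\le 0$, so both vanish, forcing $\Xi_k v=0$ and hence $H_k v=0$ and $X_* G_k X_* v=0$. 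The latter gives $G_k X_* v=0$, so $A_k v=\wtd A_k v + G_k X_* v=\lambda v$; then $A_k v=\lambda v$, $H_k v=0$, $\Re(\lambda)\ge 0$ contradict detectability of $(H_k,A_k)$ (\Cref{lm:detectable}), forcing $v=0$. This argument uses only the induction hypothesis $X_k\preceq X_*$ (through $\scrR_k(X_*)\preceq 0$), so there is no circularity with the upper bound; getting the detectability/observability bookkeeping exactly right is the delicate part.

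For (ii), part (i) shows $\{X_k\}$ is nondecreasing in the L\"owner order and bounded above by $X_*$, so it converges to some $\bar X$ with $0\preceq\bar X\preceq X_*$ (a monotone, order-bounded sequence of symmetric matrices converges entrywise). Since $R_{\rmc}(X)=R+\Pi_{22}(X)\succ 0$ throughout, the maps $A_{\rmc},G_{\rmc},H_{\rmc}$ are continuous in $X$; letting $k\to\infty$ in $A_k^{\T}X_{k+1}+X_{k+1}A_k-X_{k+1}G_kX_{k+1}+H_k=0$ (with $X_k,X_{k+1}\to\bar X$) gives $\scrR(\bar X)=0$ via the equivalent form \eqref{eq:SCARE-CARE-1}. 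As $\bar X\succeq 0$ solves \SCARE, uniqueness of the \PSD\ solution in \Cref{thm:existence} forces $\bar X=X_*$. Finally, passing to the limit in $\eig(A_k-G_kX_*)\subset\bbC_-$ and using continuity of the spectrum places $\eig\big(A_{\rmc}(X_*)-G_{\rmc}(X_*)X_*\big)$ in the closure $\overline{\bbC_-}=\bbC_-\cup\ibbR$, which is the claimed inclusion.
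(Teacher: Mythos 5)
Your proposal is correct and takes essentially the same approach as the paper's proof: induction on $k$ built on \Cref{lm:Omega-incr}, \Cref{lm:LyapnovEq-PSD-prop}, and a detectability contradiction (via \Cref{lm:detectable}) to establish stability of $A_k-G_kX_*$, followed by monotone-bounded convergence plus uniqueness of the \PSD\ solution and continuity of eigenvalues for part (ii). The only difference is organizational: you re-derive the stability of $A_k-G_kX_*$ within each step directly from the hypothesis $X_k\preceq X_*$ (so it need not be carried as a separate induction item), whereas the paper carries it in the induction invariant and proves it at index $\ell+1$ after establishing $X_{\ell+1}\preceq X_*$ --- both rest on the identical identity and contradiction argument.
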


\begin{proof}
With $X_0=0$, all conclusions of \Cref{thm:SCARE2CARE} are valid.

We  prove, by induction on $k\ge 0$, that
\begin{align}\label{eq:FPSDA-mono-incr:pf-1}
     X_k\preceq X_{k+1},\ X_k\preceq X_*,\ \scrR(X_k)\succeq 0, \text{ and }A_k-G_kX_*\text{ is stable},
\end{align}
which implies item (i).
For $k=0$, we have $0=X_0\preceq X_*$ and $\scrR(X_0)=H_0\succeq 0$. By \Cref{thm:SCARE2CARE}, \CARE\ \eqref{eq:FPSCARE(k)} for $k=0$
has a stabilizing solution, which is also \PSD. As specified by \Cref{alg:SCARE-SDA}, that solution is computed
by \Cref{alg:CARE-SDA} as $X_1\succeq 0=X_0$.
Now, we claim that $A_0-G_0X_*$ is stable. From \Cref{lm:Omega-incr} and using the fact that $X_0\preceq X_*$
and hence $\Omega(X_0)\preceq\Omega(X_*)$ by \Cref{lm:Omega-incr}, we have
\begin{align}\label{eq:FPSDA-mono-incr:pf-2}
(A_0-G_0X_*)^{\T}X_*+X_*(A_0-G_0X_*)
     &=\begin{bmatrix}
          X_*\\
          -I
        \end{bmatrix}^{\T}
        \Omega(X_0)
        \begin{bmatrix}
          X_*\\
          -I
        \end{bmatrix}-H_0-X_*^{\T}G_0X_*\nonumber\\
     &\preceq\begin{bmatrix}
          X_*\\
          -I
        \end{bmatrix}^{\T}
        \Omega(X_*)
        \begin{bmatrix}
          X_*\\
          -I
        \end{bmatrix}-H_0-X_*^{\T}G_0X_*\nonumber\\
     &= -H_0-X_*^{\T}G_0X_*.
\end{align}
Assume, to the contrary, that $A_0-G_0X_*$ is not stable. Then there exist $\bbC^n\ni \by\ne 0$ and $\lambda\in\bbC$ with $\Re(\lambda)\ge 0$ such that
$(A_0-G_0X_*)\by=\lambda \by$. From \eqref{eq:FPSDA-mono-incr:pf-2}, we have
    \begin{align*}
        0\le 2\Re(\lambda)\,\by^{\HH}X_*\by\le -\by^{\HH}H_0\by-\by^{\HH}X_*^{\T}G_0X_*\by.
    \end{align*}
Since $H_0,\, G_0\succeq 0$, we conclude that
$\by^{\HH}H_0\by$=$\by^{\HH}X_*^{\T}G_0X_*\by=0$ and hence
$H_0\by=0$ and $G_0X_*\by=0$. This implies that $A_0\by=\lambda \by$, which is a contradiction because of the detectability of $(H_0,A_0)$
by \Cref{lm:detectable}. Hence, $A_0-G_0X_*$ is stable.
Suppose that \eqref{eq:FPSDA-mono-incr:pf-1} is true for $k=\ell\ge 0$ and next we will show that it holds
for $k=\ell+1$.
We know that $X_{\ell+2}$ is the stabilizing solution of \CARE
\begin{align*}
 \begin{bmatrix} X &  -I \end{bmatrix} \Omega(X_{\ell+1})\begin{bmatrix}
        X\\
     -I  \end{bmatrix}=0,
\end{align*}
implying that $A_{\ell+1} - G_{\ell+1} X_{\ell+2}$ is stable. It can be verified that
\begin{align}
( A_{\ell+1} & - G_{\ell+1} X_{\ell+2})^{\T}(X_{\ell+2}  - X_{\ell+1}) + ( X_{\ell+2} - X_{\ell+1})( A_{\ell+1} - G_{\ell+1} X_{\ell+2})  \nonumber\\
    =& A_{\ell+1}^{\T}X_{\ell+2}+X_{\ell+2}A_{\ell+1}- X_{\ell+2} G_{\ell+1}X_{\ell+2}-(A_{\ell+1}^{\T}X_{\ell+1}+X_{\ell+1}A_{\ell+1}-X_{\ell+1} G_{\ell+1}X_{\ell+1}) \nonumber \\
    &- (X_{\ell+2} - X_{\ell+1}) G_{\ell+1} ( X_{\ell+2} - X_{\ell+1}) \nonumber \\
    =& -\scrR(X_{\ell+1})- (X_{\ell+2} - X_{\ell+1}) G_{\ell+1} ( X_{\ell+2} - X_{\ell+1}). \label{eq:FPSDA-mono-incr:pf-3}
\end{align}
From \Cref{lm:Omega-incr} and using the induction assumption that $X_{\ell}\preceq X_{\ell+1}$, we have
by \Cref{lm:Omega-incr}
\begin{align*}
0=\begin{bmatrix}
        X_{\ell+1}\\
        -I
     \end{bmatrix}^{\T} \Omega(X_{\ell})
    \begin{bmatrix}
        X_{\ell+1}\\
        -I
     \end{bmatrix}
\preceq \begin{bmatrix}
        X_{\ell+1}\\
        -I
     \end{bmatrix}^{\T} \Omega(X_{\ell+1})
     \begin{bmatrix}
        X_{\ell+1}\\
        -I
        \end{bmatrix}
=\scrR(X_{\ell+1}).
\end{align*}
Hence, $X_{\ell+1}\preceq X_{\ell+2}$ by applying \Cref{lm:LyapnovEq-PSD-prop} to \eqref{eq:FPSDA-mono-incr:pf-3}. Using  the induction assumption that $X_{\ell}\preceq X_*$ and \Cref{lm:Omega-incr}, we have
\begin{align*}
(A_{\ell}-G_{\ell}X_*)^{\T}&(X_*-X_{\ell+1})+(X_*-X_{\ell+1})\\
    &=\begin{bmatrix}
        X_*\\
        -I
      \end{bmatrix}^{\T} \Omega(X_{\ell})
      \begin{bmatrix}
        X_*\\
        -I
      \end{bmatrix}-(X_*-X_{\ell+1})G_{\ell}(X_*-X_{\ell+1}) \\
    &\preceq \begin{bmatrix}
        X_*\\
        -I
      \end{bmatrix}^{\T} \Omega(X_*)
      \begin{bmatrix}
        X_*\\
        -I
      \end{bmatrix}-(X_*-X_{\ell+1})G_{\ell}(X_*-X_{\ell+1}) \\
    &= -(X_*-X_{\ell+1})G_{\ell}(X_*-X_{\ell+1}).
\end{align*}
This leads to $X_{\ell+1}\preceq X_*$ because $A_{\ell}-G_{\ell}X_*$ is stable. Now, we claim that $A_{\ell+1}-G_{\ell+1}X_*$ is stable. too.
Assume, to the contrary, that $A_{\ell+1} - G_{\ell+1} X_*$ is not stable. Then there exist $\bbC^n\ni\by \ne 0$ and $\lambda\in\bbC$ with
$\Re(\lambda) \ge 0$ such that $(A_{\ell+1} - G_{\ell+1} X_*) \by = \lambda \by$.
Using $\Omega(X_*)\succeq \Omega(X_{\ell+1})$ by \Cref{lm:Omega-incr} because of $X_{\ell+1}\preceq X_*$ we have just proved, we have
\begin{align*}
    0& =\by^{\HH}\begin{bmatrix}
        X_*\\
        -I
      \end{bmatrix}^{\T} \Omega(X_*)
    \begin{bmatrix}
        X_*\\
     -I  \end{bmatrix}\by\\
    &\ge\by^{\HH}\begin{bmatrix}
        X_*\\
        -I
      \end{bmatrix}^{\T} \Omega(X_{\ell+1})
    \begin{bmatrix}
        X_*\\
     -I  \end{bmatrix}\by\\
     &=\by^{\HH}\left[(A_{\ell+1} - G_{\ell+1} X_*)^{\T}X_*+X_*(A_{\ell+1} - G_{\ell+1} X_*)+X_* G_{\ell+1} X_*+H_{\ell+1}\right]\by\\
     &=2 \Re(\lambda) \by^{\HH}X_*\by+\by^{\HH}X_* G_{\ell+1} X_*\by+\by^{\HH} H_{\ell+1} \by.
\end{align*}
Since $X_*,\, H_{\ell+1},\, G_{\ell+1} \succeq 0$, we find that
$\Re(\lambda)\by^{\HH}X_*\by=\by^{\HH}X_* G_{\ell+1} X_*\by=\by^{\HH} H_{\ell+1} \by=0$, implying $H_{\ell+1} \by = 0$ and $G_{\ell+1} X_* \by = 0$.
This means that
\begin{align*}
    \lambda \by = (A_{\ell+1} - G_{\ell+1} X_*) \by = A_{\ell+1} \by, \quad \by \ne 0, \quad \Re(\lambda) \ge 0, \quad \mbox{ and } \quad H_{\ell+1} \by = 0,
\end{align*}
which contradicts the detectability of $(H_{\ell+1}, A_{\ell+1})$. Therefore, $A_{\ell+1} - G_{\ell+1} X_*$ is stable.
The induction process is completed.

For item (ii), since the sequence $\{X_k\}$ is monotonically increasing and bounded from above by the unique \PSD\  solution $X_*$  of \SCARE\ \eqref{eq:SCARE-0},  the sequence $\{X_k\}$ converges to the \PSD\  solution of \SCARE\ \eqref{eq:SCARE-0}, i.e.,
$\lim_{k\rightarrow \infty}X_k=X_*$. From item (i), all $A_{k} - G_{k} X_{*}$ are stable, by the continuity of matrix eigenvalues with respect to matrix entries, we conclude that $\eig\left(A_{\rmc}(X_{*})-G_{\rmc}(X_{*})X_{*}\right)\subset \mathbb{C}_-\bigcup \ibbR$.
\end{proof}



\begin{theorem} \label{thm:FPSDA-mono-decr}
Assume \eqref{eq:assume-always}.
If
$X_0\succeq X_*$, $A_0-G_0X_0$ is stable, and $\scrR(X_0) \preceq 0$,
then we have the following statements.
\begin{enumerate}[{\rm (i)}]
      \item $X_0\succeq X_1\succeq\cdots\succeq X_k\succeq X_*$ and $\scrR(X_k)\preceq 0$ for each $k\ge 0$;
     \item $\lim_{k\rightarrow \infty}X_k=X_{*}$. 
\end{enumerate}
\end{theorem}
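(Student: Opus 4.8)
The plan is to mirror the monotonically increasing case (\Cref{thm:FPSDA-mono-incr}) with all inequalities reversed: prove item~(i) by induction on $k$, carrying the two invariants $X_k\succeq X_*$ and $\scrR(X_k)\preceq 0$, and obtaining the monotonicity $X_{k+1}\preceq X_k$ as a by-product at each step; then deduce item~(ii) from monotone convergence together with the uniqueness in \Cref{thm:existence}. First note that since $X_k\succeq X_*\succeq 0$, \Cref{thm:SCARE2CARE} guarantees that each frozen \CARE\ \eqref{eq:FPSCARE(k)} has a unique stabilizing solution $X_{k+1}\succeq 0$, i.e. $A_k-G_kX_{k+1}$ is stable, so the iteration is well defined.

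The key ingredient, and the step I expect to be the main obstacle, is a stability statement playing the role that ``$A_k-G_kX_*$ is stable'' played in \Cref{thm:FPSDA-mono-incr}. Here I instead need $A_k-G_kX_k$ to be stable, which I will get from the general fact: if $X\succeq 0$ and $\scrR(X)\preceq 0$, then $A_{\rmc}(X)-G_{\rmc}(X)X$ is stable. To see this, set $\wtd A=A_{\rmc}(X)-G_{\rmc}(X)X$ and compute from \eqref{eq:SCARE-CARE-1} that
\[
\wtd A^{\T}X+X\wtd A=\scrR(X)-H_{\rmc}(X)-XG_{\rmc}(X)X\preceq 0,
\]
since $\scrR(X)\preceq 0$, $H_{\rmc}(X)\succeq 0$ by \Cref{lm:detectable}, and $XG_{\rmc}(X)X\succeq 0$. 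If $\wtd A\by=\lambda\by$ with $\by\ne 0$ and $\Re(\lambda)\ge 0$, then $0\le 2\Re(\lambda)\,\by^{\HH}X\by=\by^{\HH}(\wtd A^{\T}X+X\wtd A)\by\le 0$ forces each nonpositive term to vanish, giving $H_{\rmc}(X)\by=0$ and $G_{\rmc}(X)X\by=0$, whence $A_{\rmc}(X)\by=\wtd A\by=\lambda\by$; this contradicts the detectability of $(H_{\rmc}(X),A_{\rmc}(X))$ from \Cref{lm:detectable}. Applied with $X=X_k$ this yields $A_k-G_kX_k$ stable, and incidentally shows the hypothesis ``$A_0-G_0X_0$ is stable'' is automatic.

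With both $A_k-G_kX_k$ and $A_k-G_kX_{k+1}$ stable, the inductive step splits into three comparisons, all reusing \Cref{lm:Omega-incr,lm:LyapnovEq-PSD-prop} and the reformulation in \Cref{lm:SCARE-equiv}; recall that $\scrR(X_k)$ equals the frozen \CARE\ left-hand side evaluated at $X_k$, since freezing at $X_k$ matches $A_{\rmc}(X_k),G_{\rmc}(X_k),H_{\rmc}(X_k)$. (1)~Subtracting that frozen \CARE\ at $X_k$ and completing the square around $X_k$ gives
\[
(A_k-G_kX_k)^{\T}\Delta+\Delta(A_k-G_kX_k)=-\scrR(X_k)+\Delta G_k\Delta,\qquad \Delta:=X_{k+1}-X_k,
\]
whose right-hand side is $\succeq 0$; stability of $A_k-G_kX_k$ and \Cref{lm:LyapnovEq-PSD-prop} force $\Delta\preceq 0$, i.e. $X_{k+1}\preceq X_k$. (2)~From $X_k\succeq X_*\succeq 0$ and \Cref{lm:Omega-incr} we get $\Omega(X_k)\succeq\Omega(X_*)$, so $\begin{bmatrix}X_*&-I\end{bmatrix}\Omega(X_k)\begin{bmatrix}X_*\\-I\end{bmatrix}\succeq\scrR(X_*)=0$; subtracting the frozen \CARE\ solved by $X_{k+1}$, completing the square around $X_{k+1}$, and using stability of $A_k-G_kX_{k+1}$ with \Cref{lm:LyapnovEq-PSD-prop} gives $X_{k+1}\succeq X_*$. (3)~Now $X_k\succeq X_{k+1}\succeq 0$, so $\Omega(X_{k+1})\preceq\Omega(X_k)$ by \Cref{lm:Omega-incr} and hence, by \Cref{lm:SCARE-equiv},
\[
\scrR(X_{k+1})=\begin{bmatrix}X_{k+1}&-I\end{bmatrix}\Omega(X_{k+1})\begin{bmatrix}X_{k+1}\\-I\end{bmatrix}\preceq\begin{bmatrix}X_{k+1}&-I\end{bmatrix}\Omega(X_k)\begin{bmatrix}X_{k+1}\\-I\end{bmatrix}=0,
\]
the last equality because $X_{k+1}$ solves the frozen \CARE\ at $X_k$. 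This restores both invariants at $k+1$ and completes item~(i).

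For item~(ii), the sequence $\{X_k\}$ is monotonically decreasing and bounded below by $X_*$ in the \PSD\ order, hence convergent to some $\wtd X\succeq X_*\succeq 0$. Passing to the limit in $\begin{bmatrix}X_{k+1}&-I\end{bmatrix}\Omega(X_k)\begin{bmatrix}X_{k+1}\\-I\end{bmatrix}=0$ and using continuity of $\Omega(\cdot)$ shows $\scrR(\wtd X)=0$, so $\wtd X$ is a \PSD\ solution of \SCARE\ \eqref{eq:SCARE-0}; the uniqueness in \Cref{thm:existence} then forces $\wtd X=X_*$.
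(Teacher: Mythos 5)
Your proof is correct, and its overall skeleton---a single induction carrying the invariants $X_k\succeq X_*$ and $\scrR(X_k)\preceq 0$, powered by \Cref{lm:Omega-incr}, \Cref{lm:SCARE-equiv}, \Cref{lm:LyapnovEq-PSD-prop} and \Cref{thm:SCARE2CARE}, with item (ii) obtained from monotone convergence plus uniqueness---matches the paper's. The genuine difference is where the stability needed for the Lyapunov arguments comes from. The paper uses the hypothesis that $A_0-G_0X_0$ is stable only for the base step $X_0\succeq X_1$; in the inductive step it compares $X_{\ell+1}$ with $X_\ell$ through the matrix $A_{\ell-1}-G_{\ell-1}X_\ell$, which is stable because $X_\ell$ is the stabilizing solution of the \CARE\ frozen at $X_{\ell-1}$, and it extracts positivity of the Lyapunov right-hand side from $\Omega$-monotonicity. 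You instead prove a standalone fact: if $X\succeq 0$ and $\scrR(X)\preceq 0$, then $A_{\rmc}(X)-G_{\rmc}(X)X$ is stable, via the detectability of $\big(H_{\rmc}(X),A_{\rmc}(X)\big)$ from \Cref{lm:detectable}---the same contradiction pattern the paper deploys in \Cref{thm:FPSDA-mono-incr} for $A_k-G_kX_*$, but does not use in this theorem---and you then run the decrease step through $A_k-G_kX_k$ with $-\scrR(X_k)\succeq 0$ feeding the Lyapunov equation directly. What this buys: it shows the hypothesis ``$A_0-G_0X_0$ is stable'' is redundant (it follows from $X_0\succeq X_*\succeq 0$ and $\scrR(X_0)\preceq 0$), which slightly strengthens the theorem, and it keeps every comparison anchored at index $k$ rather than reaching back to $k-1$. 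Your item (ii), which passes to the limit in the frozen \CARE\ and invokes uniqueness from \Cref{thm:existence}, also spells out the limit argument that the paper compresses into a single sentence.
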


\begin{proof}
With $X_0\succeq X_*\succeq 0$, all conclusions of \Cref{thm:SCARE2CARE} are valid.

We first prove item (i).
We claim that if $X_{k-1}\succeq X_*$, then  $X_k\succeq X_*$. By \CARE\ \eqref{eq:FPSCARE(k)}, we have
     \begin{align}\label{eq:FPSDA-mono-decr:pf-1}
( A_{k-1} - G_{k-1} X_k)^{\T}&(X_* - X_k) + ( X_* - X_k)( A_{k-1} - G_{k-1} X_k)  \nonumber\\
     &=(X_* - X_k) G_{k-1} ( X_* - X_k) + \begin{bmatrix}
        X_*\\
        -I
     \end{bmatrix}^{\T} \Omega(X_{k-1})
     \begin{bmatrix}
        X_*\\
        -I
     \end{bmatrix}.
\end{align}
Since $X_{k-1}\succeq X_*$ and $X_*$ is a solution of \SCARE\ \eqref{eq:SCARE-eq}, it follows from \Cref{lm:Omega-incr} that
\begin{align*}
\begin{bmatrix}
        X_*\\
        -I
     \end{bmatrix}^{\T} \Omega(X_{k-1}) \begin{bmatrix}
        X_*\\
     -I  \end{bmatrix}
\succeq \begin{bmatrix}
        X_*\\
        -I
     \end{bmatrix}^{\T} \Omega(X_*) \begin{bmatrix}
        X_*\\
     -I  \end{bmatrix}=0.
\end{align*}
Since $G_{k-1}\succeq 0$ and $A_{k-1} - G_{k-1} X_k$ is stable,
it follows from \eqref{eq:FPSDA-mono-decr:pf-1} and \Cref{lm:LyapnovEq-PSD-prop} that $X_k\succeq X_*$.
Because $X_0\succeq X_*\succeq 0$, we conclude from what we just proved that $X_k\succeq X_*$ for all $k$.

Next,  we show, by induction, that the sequence $\{X_k\}_{k=0}^{\infty}$ is monotonically decreasing.  First, we show that
$X_0\succeq X_1$. Since $X_1$ is the stabilizing solution of
\begin{align*}
    A_0^{\T}X+XA_0-XG_0X+H_0=0,
\end{align*}
we have
\begin{align*}
    &(A_0-G_0X_0)^{\T}(X_1-X_0)+(X_1-X_0)(A_0-G_0X_0)  
    =(X_0-X_1)G_0(X_0-X_1)-\scrR(X_0)\succeq 0.
\end{align*}
Because $(A_0-G_0X_0)$ is stable, we conclude, by \Cref{lm:LyapnovEq-PSD-prop}, $X_0\succeq X_1$, i.e.,
$X_{k-1}\succeq X_{k}\succeq 0$ holds for $k=0$.
Suppose that $X_{k-1}\succeq X_{k}\succeq 0$ holds for $k=\ell$. We now prove it for $k=\ell+1$.
By \Cref{thm:SCARE2CARE}, $X_{\ell+1}$ is the stabilizing solution of
\begin{align*}
 \begin{bmatrix} X &  -I \end{bmatrix} \Omega(X_{\ell})\begin{bmatrix}
        X\\
     -I  \end{bmatrix}=0.
\end{align*}
For the same reason, $X_{\ell}$ is the stabilizing solution of \eqref{eq:FPSCARE(k)} for $k=\ell-1$ , implying
$A_{\ell-1} - G_{\ell-1} X_{\ell}$ is stable. On the other hand,
\begin{multline}\label{eq:FPSDA-mono-decr:pf-2}
( A_{\ell-1} - G_{\ell-1} X_{\ell})^{\T}(X_{\ell+1} - X_{\ell}) + ( X_{\ell+1} - X_{\ell})( A_{\ell-1} - G_{\ell-1} X_{\ell})  \\
     =  (X_{\ell+1} - X_{\ell}) G_{\ell-1} ( X_{\ell+1} - X_{\ell}) + \begin{bmatrix}
          X_{\ell+1}\\
           -I
        \end{bmatrix}^{\T} \Omega(X_{\ell-1})
        \begin{bmatrix}
          X_{\ell+1}\\
           -I
        \end{bmatrix}.
\end{multline}
By  \Cref{lm:Omega-incr} and using the induction assumption that $X_{\ell-1}\succeq X_{\ell}$, we have
\begin{align*}
\begin{bmatrix}
          X_{\ell+1}\\
           -I
        \end{bmatrix}^{\T} \Omega(X_{\ell-1}) \begin{bmatrix}
        X_{\ell+1}\\
     -I  \end{bmatrix}
\succeq \begin{bmatrix}
          X_{\ell+1}\\
           -I
        \end{bmatrix}^{\T} \Omega(X_{\ell}) \begin{bmatrix}
        X_{\ell+1}\\
     -I  \end{bmatrix}=0.
\end{align*}
Hence, $X_{\ell}\succeq X_{\ell+1}$ by applying \Cref{lm:LyapnovEq-PSD-prop} to \eqref{eq:FPSDA-mono-decr:pf-2}.
This completes the proof of that $\{ X_k \}_{k=0}^{\infty}$ is a monotonically decreasing sequence.
Now that we know  $\{ X_k \}_{k=0}^{\infty}$ is monotonically decreasing, by \Cref{lm:Omega-incr}  we have
\begin{align*}
\scrR(X_k)= \begin{bmatrix}
        X_k\\
        -I
    \end{bmatrix}^{\T} \Omega(X_k)
    \begin{bmatrix}
        X_k\\
        -I
    \end{bmatrix}
\preceq  \begin{bmatrix}
        X_k\\
        -I
    \end{bmatrix}^{\T} \Omega(X_{k-1}) \begin{bmatrix}
        X_k\\
     -I  \end{bmatrix}=0,
\end{align*}
completing the proof of item (i).

Since the \PSD\ solution $X_*$ of \SCARE\ \eqref{eq:SCARE-0} is unique,
item (ii) is a corollary of item (i). 
%
\end{proof}

\section{Newton's  method} \label{sec:Newton}
Often Newton's method is the default when it comes to solving a nonlinear equation. In the current case,
the nonlinear equation is $\scrR(X)=0$.  As such, it comes as
no surprise that Newton's method for solving  \SCARE\ \eqref{eq:SCARE-0} 
has been fully investigated in \cite{dahi:2001} and in \cite{guo:2002a} for a specially case, mostly from the theoretical side in terms
of the iteration updating formula  and convergence analysis.
It is noted that the iteration updating formula is implicitly determined by the Newton step equation, a linear matrix equation known as
the generalized Lyapunov's equation that is numerically difficult even for a modest scale
\cite{bebr:2013,brri:2019,hasi:2021,shss:2016}.


We present Newton's method here chiefly for the purpose of comparing it with our method \FP\SDA\ in \Cref{alg:SCARE-SDA},
and along the way we contribute to its efficient implementation, an issue that was not addressed in \cite{dahi:2001}, while
Guo~\cite{guo:2002a} addressed the issue with one step of a fixed-point type modification to the Newton step equation.
The modification, however, destroyed the quadratic convergence property of Newton's method.

In general, Newton's method needs a sufficiently accurate initial $X_0$
to ensure overall convergence. Finding such an initial is never trivial, if at all possible.
On the other hand, our \FP\SDA\
in \Cref{alg:SCARE-SDA}
is always convergent with $X_0=0$, guaranteed by  \Cref{thm:FPSDA-mono-incr}.

Let $\scrR'_X[E]$ be the Fr\`{e}chet  derivative of $\scrR(\cdot)$ at $X$ along direction $E$:
$$
\scrR'_X[E]=\lim_{t\to 0}\frac 1t\Big(\scrR(X+tE)-\scrR(X)
\Big).
$$
$\scrR'_X$ is a linear operator from $\bbR^{n\times n}$ to itself that maps $E\in\bbR^{n\times n}$ to
$\scrR'_X[E]\in\bbR^{n\times n}$. Formally, Newton's method for solving  \SCARE\ \eqref{eq:SCARE-0} goes as follows:
given initial $X_0\in\bbR^{n\times n}$ such that $X_0\succeq 0$, iterate
\begin{equation}\label{eq:NT4SCARE'}
X_{k+1} = X_k - (\scrR_{X_k}^{\prime})^{-1} [\scrR(X_k)]\, \, \mbox{ for } k = 0, 1, 2, \ldots.
\end{equation}
provided that Fr\`{e}chet  derivatives $\scrR_{X_k}^{\prime}$ as a linear operator are invertible for all $k$.
As before, let
$$
\mbox{$A_k=A_{\rmc}(X_k)$, $R_k = R_{\rmc}(X_k)$, and $L_k=L_{\rmc}(X_k)$}.
$$
Iteration \eqref{eq:NT4SCARE'} is understood as
$\scrR_{X_k}^{\prime}[X_{k+1} - X_k]=-\scrR(X_k)$, yielding
the following nonlinear matrix equation in $X_{k+1}$:
\begin{subequations}\label{eq:NT4SCARE}
\begin{align}
    \what A_k^{\T} X_{k+1} + X_{k+1} \what A_k + \what\Pi_k(X_{k+1}) + M_k = 0, \label{eq:NT4SCARE-step}
\end{align}
where
\begin{align}
S_k &= X_k B+L_k, \label{eq:NT-SXk} \\  
    \what A_k &=  A_k - G_k X_k, \label{eq:NT-AXk} \\
\what\Pi_k(X) &= \begin{bmatrix}
                  I \\
                  -R_k^{-1}S_k^{\T}
                 \end{bmatrix}^{\T} \Pi(X)
                 \begin{bmatrix}
                       I \\
                       - R_{k}^{-1} S_k^{\T}
                 \end{bmatrix}, \label{eq:NT-PiX} \\
M_k &= \begin{bmatrix}
                  I \\
                  -R_k^{-1}S_k^{\T}
                 \end{bmatrix}^{\T}
    \begin{bmatrix}
        Q & L \\ L^{\T} & R
    \end{bmatrix} \begin{bmatrix}
        I \\ - R_{k}^{-1} S_k^{\T}
    \end{bmatrix}. \label{eq:NT-MXk}
\end{align}
\end{subequations}
Iterative formula \eqref{eq:NT4SCARE} has been obtained in \cite{dahi:2001} and in \cite{guo:2002a} for a special case.
Let $T_k=S_kR_k^{-1}$. We get
\begin{align}
\what\Pi_k(X)
   &=\Pi_{11}(X)-\Pi_{12}(X)T_k^{\T}-T_k\big[\Pi_{12}(X)\big]^{\T}+T_k\Pi_{22}(X) T_k^{\T} \nonumber\\
   &=\sum_{i=1}^r\Big({A_0^{i}}^{\T}XA_0^i-{A_0^{i}}^{\T}XB_0^iT_k^{\T}-T_k{B_0^{i}}^{\T}XA_0^i-T_k{B_0^{i}}^{\T}XB_0^iT_k^{\T}\Big) \nonumber\\
   &=\sum_{i=1}^r\big({A_0^{i}}-B_0^{i}T_k^{\T}\big)^{\T}X\big({A_0^{i}}-B_0^{i}T_k^{\T}\big). \label{eq:hatPik}
\end{align}
Plugging this expression into \eqref{eq:NT4SCARE-step}, we find that the equation takes the form of the so-called
the generalized Lyapunov's equation in the literature (see, e.g.,
\cite{bebr:2013,brri:2019,hasi:2021,shss:2016} and references therein).
Numerically, the generalized Lyapunov's equation is hard to deal with even for modest $n$. A popular option is again through the fixed-point idea,
namely freeze $\what\Pi_k(\cdot)$ to yield a Lyapunov's equation, solve the latter, and repeat the process until convergence if the process
converges.

Assuming that \eqref{eq:NT4SCARE-step} is exactly solved, Damm and Hinrichsen \cite{dahi:2001} established the following convergence theorem. Before we state the theorem, we note that
$\what\Pi_0$ defined by \eqref{eq:NT-PiX} and $\scrL_{\what A_0}$ defined as
$\scrL_{\what A_0}(X)=\what A_0^{\T} X + X \what A_0$ for $X\in\bbH^{n\times n}$ are two linear operators on $\bbR^{n\times n}$,
and so is $\scrL_{\what A_0}^{-1}\what\Pi_0$ if $\scrL_{\what A_0}$ is an invertible operator. Notation
$\rho(\scrL_{\what A_0}^{-1}\what\Pi_0)$ is the spectral radius of
$\scrL_{\what A_0}^{-1}\what\Pi_0$ as a  linear operator on $\bbR^{n\times n}$.

\begin{theorem}[{\cite{dahi:2001}}] \label{thm:NT-dahi2001}
Suppose that there exists $X_*\in\bbH^{n\times n}$ such that $\scrR(X_*) \succeq0$.
Given an initial $X_0$, if
\begin{equation}\label{eq:NT-cond-init}
\eig(\what A_0)\subset\bbC_-\quad\mbox{and}\quad
\rho(\scrL_{\what A_0}^{-1}\what\Pi_0)<1,
\end{equation}
then
\begin{enumerate}[{\rm (a)}]
    \item $X_0\succeq X_1\succeq\cdots\succeq X_k \succeq X_*$, $\scrR(X_k) \preceq 0$ for all \tr{$k \geq 1$}, and
    \item $\lim_{k \to \infty} X_k = X_{*+}\succeq X_*$, where $X_{*+}$ is the maximal solution of \eqref{eq:SCARE-0}.
\end{enumerate}
\end{theorem}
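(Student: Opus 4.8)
The plan is to exploit the fact that $\scrR$ is a \emph{concave} operator on $\{X:R_{\rmc}(X)\succ 0\}$, for which Newton's method obeys a monotone-convergence theory. First I would record the variational representation $\scrR(X)=\min_{F\in\bbR^{m\times n}}\mathcal{G}(X,F)$, where
\begin{equation*}
\mathcal{G}(X,F)=(A+BF)^{\T}X+X(A+BF)+\sum_{i=1}^r(A_0^i+B_0^iF)^{\T}X(A_0^i+B_0^iF)+\begin{bmatrix}I\\F\end{bmatrix}^{\T}\begin{bmatrix}Q&L\\L^{\T}&R\end{bmatrix}\begin{bmatrix}I\\F\end{bmatrix},
\end{equation*}
the minimizer at $X$ being $F(X)=-R_{\rmc}(X)^{-1}[XB+L_{\rmc}(X)]^{\T}$, so that $F(X_k)=-R_k^{-1}S_k^{\T}$. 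Since $\mathcal{G}(\cdot,F)$ is affine in $X$, the pointwise minimum $\scrR$ is concave; this yields the supporting-hyperplane inequality $\scrR(Y)\preceq\scrR(X)+\scrR'_X[Y-X]$, and, by the envelope identity $\scrR'_X=\partial_X\mathcal{G}(X,F(X))$, it identifies the Fr\`echet derivative with the closed-loop operator $\scrR'_{X_k}[E]=\scrL_{\what A_k}(E)+\what\Pi_k(E)$ appearing in \eqref{eq:NT4SCARE-step}.

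The engine of the proof is the positivity of the inverse Newton operator. Under the standing conditions $\eig(\what A_k)\subset\bbC_-$ and $\rho(\scrL_{\what A_k}^{-1}\what\Pi_k)<1$, I would factor $\scrL_{\what A_k}+\what\Pi_k=\scrL_{\what A_k}\big(I+\scrL_{\what A_k}^{-1}\what\Pi_k\big)$ and expand $(I+\scrL_{\what A_k}^{-1}\what\Pi_k)^{-1}=\sum_{j\ge0}(-\scrL_{\what A_k}^{-1}\what\Pi_k)^j$, which converges by the spectral-radius hypothesis. Because $\what\Pi_k$ maps the \PSD\ cone into itself and, by \Cref{lm:LyapnovEq-PSD-prop}, $-\scrL_{\what A_k}^{-1}$ is a positive operator, every summand is positive; hence $-(\scrR'_{X_k})^{-1}$ is positive. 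Consequently the sign of the Newton increment $X_{k+1}-X_k=-(\scrR'_{X_k})^{-1}[\scrR(X_k)]$ is governed by the sign of $\scrR(X_k)$.

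With these two facts I would run an induction (on $k\ge 1$) establishing simultaneously that $\scrR(X_k)\preceq 0$, that $X_k\succeq X_*$, that $X_k\succeq X_{k+1}$, and that the spectral conditions persist at index $k+1$. The residual bound is immediate from concavity and the Newton equation: $\scrR(X_{k+1})\preceq\scrR(X_k)+\scrR'_{X_k}[X_{k+1}-X_k]=0$; equivalently the Newton step forces $\mathcal{G}(X_{k+1},F(X_k))=0$, which also shows $X_{k+1}=-(\scrR'_{X_k})^{-1}[Q_{F_k}]\succeq 0$. The lower bound follows by applying concavity at $X_k$ to the subsolution: $0\preceq\scrR(X_*)\preceq\scrR'_{X_k}[X_*-X_{k+1}]$, and then applying $(\scrR'_{X_k})^{-1}$, which reverses the cone order, to get $X_{k+1}\succeq X_*$. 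Monotone decrease $X_{k+1}\preceq X_k$ combines $\scrR(X_k)\preceq 0$ with the positivity of $-(\scrR'_{X_k})^{-1}$; the base index $k=1$ is settled by these same identities starting from the hypotheses at $X_0$. The hard part is propagating the spectral conditions, i.e.\ showing $F(X_{k+1})$ remains a stabilizing feedback. Here I would abandon the explicit eigenvalue formulation and argue with resolvent-positive operators: from $\scrR(X_{k+1})\preceq 0$ and $Q_{F_{k+1}}\succeq 0$ one obtains the Lyapunov inequality $(\scrL_{\what A_{k+1}}+\what\Pi_{k+1})[X_{k+1}]\preceq 0$ with $X_{k+1}\succeq 0$; since $\scrL_{\what A_{k+1}}+\what\Pi_{k+1}$ is resolvent positive, its spectral abscissa is an eigenvalue with a \PSD\ eigenmatrix (a Krein--Rutman/Perron argument), so the inequality forces the abscissa to be nonpositive, and the detectability encoded in \Cref{lm:detectable} through \eqref{eq:ker-cond} is what rules out a boundary eigenvalue and upgrades this to strict stability, equivalent to $\eig(\what A_{k+1})\subset\bbC_-$ and $\rho(\scrL_{\what A_{k+1}}^{-1}\what\Pi_{k+1})<1$. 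I expect this positivity step, rather than any direct computation, to be the main obstacle.

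Finally, for part (b) the sequence is monotonically decreasing and bounded below by $X_*$, hence convergent to some $\bar X\succeq X_*$. Because $X_k\to\bar X$ keeps $R_k\succeq R\succ 0$, the data $\what A_k,\what\Pi_k,M_k$ converge and $\scrR'_{X_k}$ stays bounded, so passing to the limit in $\scrR(X_k)=-\scrR'_{X_k}[X_{k+1}-X_k]$ with $X_{k+1}-X_k\to 0$ gives $\scrR(\bar X)=0$. Maximality is then free: the lower-bound argument above used only $\scrR(X_*)\succeq 0$, so it applies verbatim with $X_*$ replaced by any solution $Y$ (for which $\scrR(Y)=0\succeq 0$), yielding $X_k\succeq Y$ for all $k$ and hence $\bar X\succeq Y$; thus $\bar X=X_{*+}$ and $X_{*+}\succeq X_*$.
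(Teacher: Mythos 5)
The paper itself gives no proof of \Cref{thm:NT-dahi2001}: it is quoted verbatim from Damm and Hinrichsen \cite{dahi:2001}, so your attempt can only be measured against that source. Measured that way, your strategy is the right one and is essentially theirs: $\scrR$ is concave as a pointwise minimum of maps affine in $X$; the envelope identity makes $\scrR'_{X_k}=\scrL_{\what A_k}+\what\Pi_k$ the closed-loop, resolvent-positive operator of \eqref{eq:NT4SCARE-step}; under \eqref{eq:NT-cond-init} the Neumann factorization shows $-(\scrR'_{X_k})^{-1}$ is a positive operator; and your derivations of $\scrR(X_{k+1})\preceq 0$, $X_{k+1}\succeq X_*$, $X_{k+1}\succeq 0$, the monotone decrease, the limit equation $\scrR(\bar X)=0$, and maximality are all correct as written.

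The genuine gap is exactly the step you flag as the obstacle: propagating the spectral conditions from $X_k$ to $X_{k+1}$, and your proposed mechanism for it does not work. First, the inequality $(\scrL_{\what A_{k+1}}+\what\Pi_{k+1})[X_{k+1}]\preceq 0$ with $X_{k+1}\succeq 0$ is too weak: pairing it with a \PSD\ eigenmatrix $W$ of the adjoint operator at the spectral abscissa $\beta$ only gives $\beta\,\langle X_{k+1},W\rangle\le 0$, which yields $\beta\le 0$ only when $\langle X_{k+1},W\rangle>0$ (e.g., when $X_{k+1}\succ 0$, which is not available), and even $\beta\le 0$ is insufficient, since the next Newton step needs $\scrR'_{X_{k+1}}$ to be invertible, i.e., strictly stable. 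Second, and more seriously, the upgrade you propose via \Cref{lm:detectable} and \eqref{eq:ker-cond} imports hypotheses that \Cref{thm:NT-dahi2001} does not grant: the theorem assumes only the existence of a subsolution $X_*$ and \eqref{eq:NT-cond-init} at $X_0$; neither \eqref{eq:assume-always} nor \eqref{eq:ker-cond} nor any detectability is assumed (avoiding such assumptions is precisely the point of \cite{dahi:2001}), so with them you would be proving a different, weaker theorem. The known way to close this step is purely order-theoretic and uses the \emph{exact} second-order remainder of concavity, $\scrR(X)+\scrR'_X[Y-X]=\scrR(Y)+(F(Y)-F(X))^{\T}R_{\rmc}(Y)\,(F(Y)-F(X))$ with $F(X)=-\big[R_{\rmc}(X)\big]^{-1}\big[XB+L_{\rmc}(X)\big]^{\T}$, rather than just the supporting-hyperplane inequality: testing these identities (at the subsolution and at $X_k$) against a putative boundary eigenmatrix $W$ of the adjoint of $\scrR'$ at a point on the segment from $X_k$ to $X_{k+1}$ forces the quadratic remainders to vanish against $W$, hence $F(\cdot)W$ to agree with $F(X_k)W$, and therefore $(\scrR'_{X_k})^{*}[W]=0$, contradicting the stability of $\scrR'_{X_k}$. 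Some continuity/homotopy argument of this kind is what must replace the Krein--Rutman-plus-detectability sketch.

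A smaller point: your induction begins at $k=1$ and never addresses $X_0\succeq X_1$, which the statement as printed asserts. In fact $X_1-X_0=-(\scrR'_{X_0})^{-1}[\scrR(X_0)]$ has no fixed sign because nothing is assumed about the sign of $\scrR(X_0)$; in \cite{dahi:2001} the monotone chain starts at $X_1$, so this appears to be an inaccuracy of the paper's restatement rather than an omission you could have repaired.
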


Now that we have a linear matrix equation \eqref{eq:NT4SCARE-step} that determines Newton's iterative step and a convergence theorem,
there are two critical issues to be dealt with in order to turn this Newton's method into a practically competitive numerical method
to solve \SCARE\ \eqref{eq:SCARE-0}.

The first issue is how to pick a sufficiently good initial $X_0$ to ensure convergence.
In general Newton's method for nonlinear equations  demands that $X_0$ falls in a sufficiently close proximity of a solution.
There is no exception here. \Cref{thm:NT-dahi2001}
imposes two strong conditions in \eqref{eq:NT-cond-init} on $X_0$ to guarantee convergence.
Firstly, it is highly nontrivial to find a matrix $X_0\in\bbH^{n\times n}$ to satisfy $\eig(\what A_0)\subset\bbC_-$.
Secondly, given that $\bbH^{n\times n}$ is a linear space of dimension
$n(n+1)/2$, seeking a matrix $X_0\in\bbH^{n\times n}$ to satisfy the second condition there is particularly hard, if at all possible.
A possible remedy that we will be adopting is to run \FP\SDA\ (\Cref{alg:SCARE-SDA}) first, which is provably convergent, up
to an approximation $\wtd X$ such that
$$
\NRes\big(\wtd X\big)\le\delta,
$$
before calling Newton's method with $X_0=\wtd X$ just computed by \FP\SDA.
This turns out to be quite an effective strategy for all our numerical examples. Unfortunately, it is not clear how to pick a right $\delta$ that is guaranteed to work, an issue that warrants further investigation.

Even if a proper initial $X_0$ is secured, implementing this Newton's method is not a trivial task because numerically solving
\eqref{eq:NT4SCARE-step} efficiently is non-trivial, as we commented before.
For very small $n$ (a few tens or smaller, for example), it can be turned into a linear system of $n^2$-by-$n^2$ in
the standard form via the Kronecker product
and then solved by the Gaussian elimination. For larger $n$, we will have to resort an iterative solver such as Smith's
method discussed in \Cref{sec:CARE-SDA}.
This is the second issue that we will address in the next  three subsections.
\Cref{alg:NT-SCARE} outlines three variants of Newton's method in one.

\subsection{Direct solution for \eqref{eq:NT4SCARE-step}}
As we just mentioned, for very small $n$, we can simply reformulate \eqref{eq:NT4SCARE-step} via the Kronecker product
as a linear system in the standard form. For ease of presentation, we drop the subscript of $X_{k+1}$
and rewrite the matrix equation as
\begin{equation}\label{eq:NTstep}
\what A_k^{\T} X + X \what A_k + \what\Pi_k(X) + M_k = 0.
\end{equation}

It can be found in most applied matrix theory/computation textbooks that for matrices $C$ and $D$ of apt sizes,
$$
\vex(CXD)=(D^{\T}\otimes C)\vex(X),
$$
where ``$\otimes$'' denotes the Kronecker product, and $\vex(\cdot)$ vectorizes a matrix into a column vector by stacking up the columns of the matrix. By this formula, the linear matrix equation  \eqref{eq:NTstep}
can be readily transformed into a linear system of equations in the standard form
\begin{subequations}\label{eq:NT-Kronecker}
\begin{equation}\label{eq:NT-Kronecker-1}
\scrB_k\vex(X)=-\vex(M_k),
\end{equation}
where
\begin{equation}\label{eq:NT-Kronecker-2}
\scrB_k=I_n\otimes \what A_k^{\T}+\what A_k^{\T}\otimes I_n
    +\sum_{i=1}^r\big({A_0^{i}}-B_0^{i}T_k^{\T}\big)^{\T}\otimes \big({A_0^{i}}-B_0^{i}T_k^{\T}\big)^{\T}\in\bbR^{n^2\times n^2}.
\end{equation}
\end{subequations}
Finally, $X_{k+1}$ is given by
\begin{equation}\label{eq:NTstep'}
\vex(X_{k+1})=\vex(X_k)-\scrB_k^{-1}\vex(\scrB_k\vex(X_k)+\vex(M_k)).
\end{equation}
For all but one of the numerical examples in \Cref{sec:egs}, we have $2\le n\le 9$. This direct method offers a viable option.

\begin{algorithm}[t]
\caption{Variants of \NT\ for solving \SCARE\ \eqref{eq:SCARE-0} } \label{alg:NT-SCARE}
\begin{algorithmic}[1]
\REQUIRE $A, Q \in \bbR^{n \times n}$, $B, L  \in \bbR^{n \times m}$, $R = R^{\T} \in \bbR^{m \times m}$,
         $A_0^i \in \bbR^{n \times n}$, $B_0^i \in \bbR^{n \times m}$ for $i = 1, \ldots, r$,
         initial $X_0^{\T} = X_0$ (sufficiently close to the desired solution for convergence), and  a tolerance $\varepsilon$.
\ENSURE $X_*$, the last $X_k$, as the computed solution to \eqref{eq:SCARE-0}.
        \hrule\vspace{1ex}
\STATE  $k = 0$;
\WHILE{$\NRes(X_k)>\varepsilon$} \label{alg:outer_NT_while}
   \STATE  $S_k = X_k B + L + \Pi_{12}(X_k)$,  $R_k = \Pi_{22}(X_k) + R$;
   \STATE  $\what A_k = A - B R_k^{-1} S_k^{\T}$, $P_k = \begin{bmatrix}
                                                          I \\
                                                          -R_k^{-1}S_k^{\T}
                                                        \end{bmatrix}$,
           $M_k =  P_k^{\T} \begin{bmatrix}
                                     Q & L \\
                                     L^{\T} & R
                                  \end{bmatrix} P_k$;
   \STATE solve \eqref{eq:NT4SCARE-step} directly for $X_{k+1}$ as explained in
          \eqref{eq:NTstep} -- \eqref{eq:NTstep'} and go to line 14;
   \STATE $Y_0 = X_k$, $j=0$; 
   \STATE  $C_{j} = P_k^{\T} \Pi(Y_{j}) P_k + M_k$.
   \WHILE{$\dfrac {\|\what A_k^{\T} Y_j + Y_j \what A_k  + C_{j}\|_{\F}}
                  {2\|\what A_k\|_{\F}\|Y_j\|_2 +\|C_{j}\|_{\F}}
              > \left(\dfrac {\|\what A_k^{\T} Y_0 + Y_0 \what A_k  + C_0\|_{\F}}
                                 {2\|\what A_k\|_{\F}\|Y_0\|_2 +\|C_0\|_{\F}}\right)^2$} 
      \STATE solve \eqref{eq:alg-NTFPSDA} for $Y$, e.g., by Smith's method, a special case of \SDA\
            (see  discussions in \Cref{sec:CARE-SDA} after \Cref{alg:CARE-SDA}), or
             by the Bartels-Stewart method \cite{bast:1972},
              and set $Y_{j+1}$ to be the computed solution;
      \STATE $j=j+1$;
      \STATE  $C_{j} = P_k^{\T} \Pi(Y_{j}) P_k + M_k$;
   \ENDWHILE 
   \STATE $X_{k+1} = Y_{j}$;
   \STATE $k = k + 1$.
\ENDWHILE \label{alg:outer_NT_end}
\RETURN $X_* = X_k$.
\end{algorithmic}
\end{algorithm}

\subsection{Fixed-point iteration for \eqref{eq:NT4SCARE-step}}
Without $\what\Pi_k(X_{k+1})$ or with it freezed at a point instead of being dependent on $X_{k+1}$, \eqref{eq:NT4SCARE-step} is a matrix Lyapunov's equation for which there are a number of direct or iterative methods (see, e.g.,  \cite{bast:1972,belt:2009,simo:2007} and references therein) for $n$ large and small.
Guo~\cite{guo:2002a}, although for a special case, proposed a modified Newton method by simply replacing $\what\Pi_k(X_{k+1})$
with $\what\Pi_k(X_k)$. 
Essentially, Guo's idea is to simply perform one step of the fixed-point iteration on  \eqref{eq:NT4SCARE-step}, but there is no reason
not to do more so that \eqref{eq:NT4SCARE-step} is solved accurately enough to maintain the usual quadratic convergence of
Newton's method.
This is exactly what we will do: with $Y_0=X_0$, solve
\begin{equation}\label{eq:alg-NTFPSDA}
\what A_k^{\T} Y + Y \what A_k  + C_{j}  = 0,
\end{equation}
where $C_{j} = P_k^{\T} \Pi(Y_{j}) P_k + M_k$,
for $Y_{j+1}$ for $j = 0, 1, 2, \ldots$
until the stopping criterion at line 8 of \Cref{alg:NT-SCARE} is met.
The design of the stopping criterion is motivated by the fact that Newton's method is usually quadratically convergent.

At line 9 of \Cref{alg:NT-SCARE}, it is stated to use Smith's method (cf. \eqref{eq:SDA4Lyap})
to solve \eqref{eq:alg-NTFPSDA} iteratively or the Bartels-Stewart method \cite{bast:1972} to solve it directly.
Specifically, we will solve for a correction $Z$ to $Y_j$. Plugging in $Y=Y_j+Z$ to \eqref{eq:alg-NTFPSDA} yields
\begin{equation}\label{eq:alg-NTFPSDA'}
\what A_k^{\T} Z + Z \what A_k  + \underbrace{[\what A_k^{\T} Y_j + Y_j \what A_k+C_{j}]}_{=:\wtd C_j}  = 0.
\end{equation}
There is not much to comment on if solved by the Bartels-Stewart method, which is based on the Schur's decomposition
of $\what A_k$.
According to \eqref{eq:SDA4Lyap}, Smith's method applied to \eqref{eq:alg-NTFPSDA'} is given by
\begin{subequations}\label{eq:alg-NTFPSDA'-smith}
\begin{align}
&S=-(\what A_k+\gamma I)^{\T},\,\,
 E_0=I+2\gamma S^{-\T},\,\,
 Z_0=-2\gamma S^{-1}\wtd C_j S^{-\T},\,\,\mbox{and} \label{eq:alg-NTFPSDA'-smith-1}\\
&E_{i+1} = E_i^2, \, \, Z_{i+1}= Z_i+E_i^{\T}Z_iE_i\,\,\,\mbox{for $i\ge 0$}, \label{eq:alg-NTFPSDA'-smith-2}
\end{align}
\end{subequations}
and, finally, $Y_{j+1}=Y_j+Z_i$ where $Z_i$ is the last approximation at convergence, based on the stopping criterion
$$
\|\what A_k^{\T} Z_i + Z_i \what A_k+\wtd C_j\|_{\F}\le\tau\cdot\|\wtd C_j\|_{\F}
$$
for a suitable $\tau$, say, $1/8$. This adopts a similar strategy as in \eqref{eq:innerSTOP-FTSDA}, with the purpose
that $Z_i$ has certain degree of accuracy to improve $Y_j$. Ideally, $Z_i$ is made to have just enough accuracy so that
any more accuracy than it already has as an approximate solution  to  \eqref{eq:alg-NTFPSDA'} won't help $Y_{j+1}$ as
an approximate solution to \eqref{eq:NT4SCARE-step}.


Again \eqref{eq:alg-NTFPSDA'-smith} involves a parameter $\gamma$ to be chosen. Optimal $\gamma$ is determined by
$$
\gamma:=\arg\min_{\lambda\in\eig(\what A_k)}\left|\frac {\lambda-\gamma}{\lambda+\gamma}\right|.
$$
For \SCARE\ \eqref{eq:SCARE-0} of interest, if $X_k$ is sufficiently accurate, then $\eig(\what A_k)\subset\bbC_-$ and hence
optimal $\gamma<0$. As discussed in \Cref{sec:CARE-SDA}, in our later experiments, we determine a suboptimal
$\gamma$ by encircling $\eig(\what A_k)$ with a rectangle $[a,b]\times [-c,c]$ where $a\le b<0$, and then setting
$\gamma$ as in \eqref{eq:gamma-suboptimal}.

Putting all together, we arrive at three variants of Newton's method: one plain Newton's method with each
Newton step solved directly and two inner-outer iterative schemes of two or three levels,
as outlined in \Cref{alg:NT-SCARE}. The latter two schemes are Newton's method combined with the fixed-point strategy, for solving \SCARE\ \eqref{eq:SCARE-0},
where the first level, the outer iteration, is the Newton iteration, the second level is
the fixed-point iteration to calculate the Newton iterative step  determined by \eqref{eq:NT4SCARE-step}.
If an iterative scheme such as Smith's method, a special case of \SDA, is used to calculate each fixed-point iterative step \eqref{eq:alg-NTFPSDA}, via
\eqref{eq:alg-NTFPSDA'} and \eqref{eq:alg-NTFPSDA'-smith}, that will be the third level.


%
%


The stopping criterion at line 8 of \Cref{alg:NT-SCARE} is designed with the purpose to capture the locally quadratic convergence
of Newton's method. Increasingly, the number of the fixed-point iterations for
calculating the next Newton's approximation determined by \eqref{eq:NT4SCARE-step} is likely increases as $X_k$ becomes more
and more accurate. Another option is to simply solve \eqref{eq:NT4SCARE-step} with one fixed-point step
as in \cite{guo:2002a}.
This yields
a modified Newton's method as is outlined in \Cref{alg:mNT-SCARE}.

\section{Numerical experiments} \label{sec:egs}
In this section, we will test and compare two methods that we outlined in \Cref{sec:intro}:
\begin{enumerate}[(1)]
  \item \FP\SDA\ in \Cref{alg:SCARE-SDA}: Iteratively freeze the coefficient matrices
        $A_{\rmc}(\cdot)$, $G_{\rmc}(\cdot)$, and $H_{\rmc}(\cdot)$ in \eqref{eq:SCARE-CARE-1} and
        solve the resulting \CARE\ by the doubling algorithm (\SDA). With $X_0=0$ initially, the method always monotonically converges by \Cref{thm:FPSDA-mono-incr}.
  \item  \NT\ in \Cref{alg:NT-SCARE}: It includes three variants of Newton's method. To distinguish them, we will use
         $\NT_1$ for Newton's step equation \eqref{eq:NT4SCARE-step} solved by transforming the equation to
         a linear system in standard form \eqref{eq:NT-Kronecker}, and both $\NT_2$ and $\NT_3$ for Newton's step equation solved iteratively
         with associated Lyapunov equations by MATLAB's {\tt lyap} in  $\NT_2$
         and by Smith's method in $\NT_3$, respectively.

         In all tests for these \NT\ variants,  \FP\SDA\ is called first to calculate an initial $X_0$
         such that
         \begin{equation}\label{eq:NT-initial-to-get}
         \NRes(X_0)\le\delta,
         \end{equation}
         where $\delta$ is some small number to be specified.
\end{enumerate}
These methods will be tested on eight examples of \SCARE\ \eqref{eq:SCARE-0}, obtained from
modifying some of the \CARE\ examples -- real or artificial --
in the literature by adding stochastic components in
$A_0^1, \ldots, A_0^r$, $B_0^1, \ldots, B_0^r$  constructed by ourselves.
The first four examples are made from artificial \CARE\ that we will use to validate our algorithms and
implementations, while
the other four examples are made from real-world applications and they are harder.

All computations are performed in MATLAB 2023a.
The stopping criterion for the (outmost) iteration is $\NRes(X_k)\le 10^{-14}$.

\subsection{Test problems for validation}\label{ssec:easy-examples}
The four examples here are  small artificial \CARE\ examples in the literature with newly added
stochastic components in $A_0^1, \ldots, A_0^r$, $B_0^1, \ldots, B_0^r$.

\begin{figure}[t]
\center
\begin{subfigure}[b]{0.42\textwidth}
\center
\includegraphics[width=\textwidth]{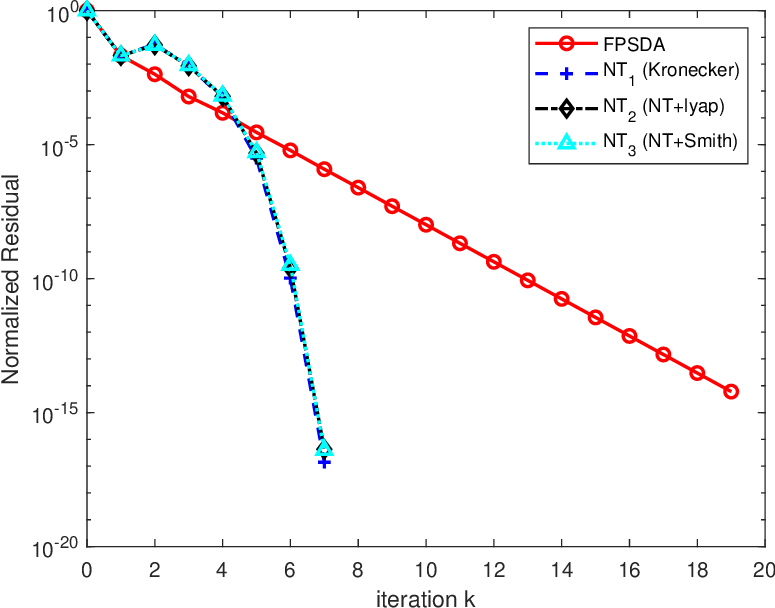}
\caption{\Cref{example:1}}
\label{fig:example1_convergence}
\end{subfigure}\quad
\begin{subfigure}[b]{0.42\textwidth}
\center
\includegraphics[width=\textwidth]{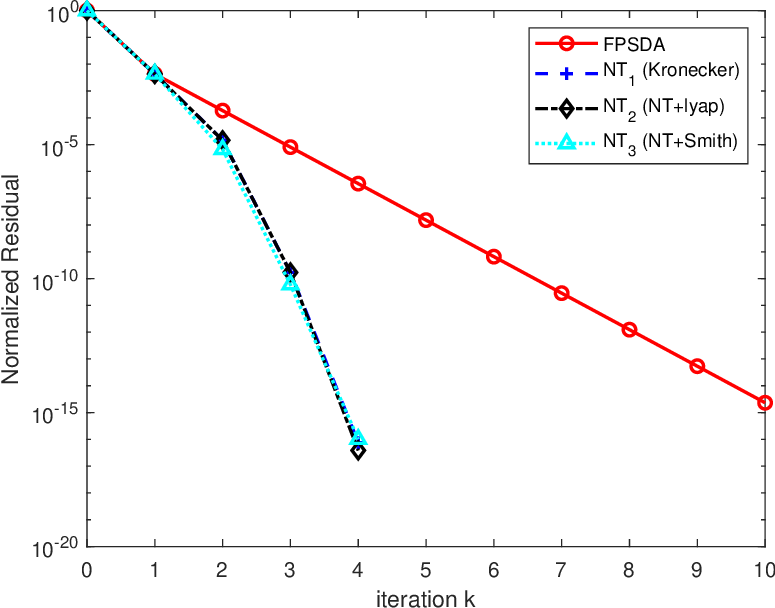}
\caption{\Cref{example:12} with $\varepsilon = 0.01$}
\label{fig:example12_convergence}
\end{subfigure}
\begin{subfigure}[b]{0.42\textwidth}
\center
\includegraphics[width=\textwidth]{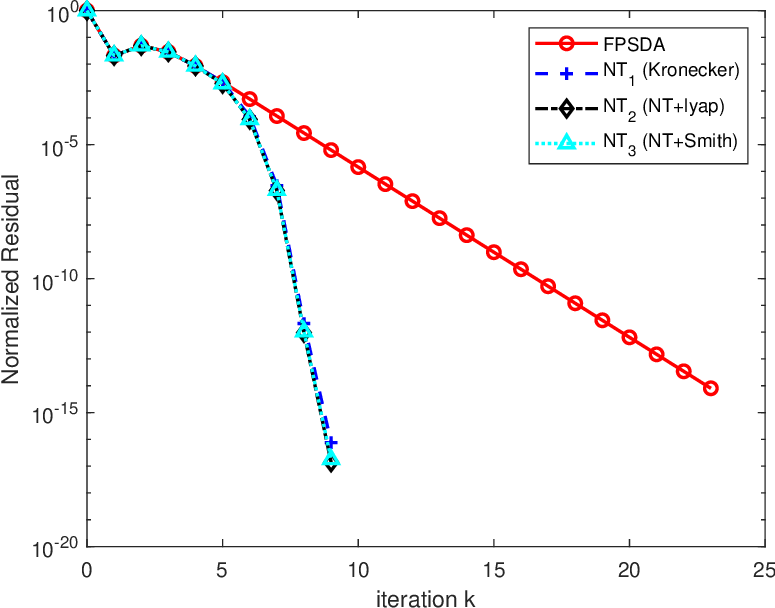}
\caption{\Cref{example:Weng}}
\label{fig:example_Weng}
\end{subfigure}\quad
\begin{subfigure}[b]{0.42\textwidth}
\center
\includegraphics[width=\textwidth]{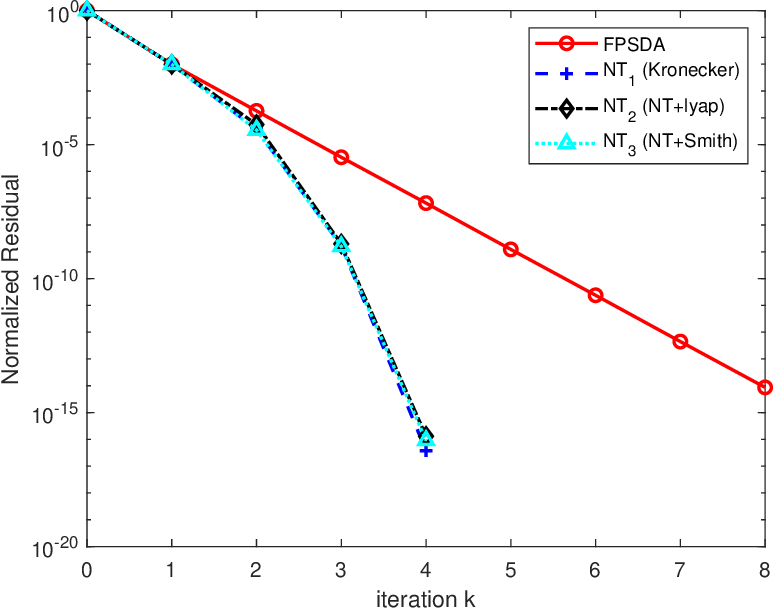}
\caption{\Cref{example:rho_NT_GT_1} with $\varepsilon = 5$}
\label{fig:rho_NT_GT_1}
\end{subfigure}
\caption{
Iterative histories of \FP\SDA, $\NT_1$ (via Kronecker's reformulation of each Newton step equation),
$\NT_2$ (via MATLAB's {\tt lyap} to iteratively solve Newton step equations),
and $\NT_3$ (via Smith's method to iteratively solve Newton step equations in an inner-outer fashion).
The plotted histories of normalized residual by the variants of Newton's method include the portion by \FP\SDA\
for calculating an initial.
}
\label{fig:easy-Egs}
\end{figure}

\begin{example} \label{example:1}
$A$, $B$, $Q$ and $R$ are taken from  \cite[Example 2.2]{abbe:1999b}:
$$
     A = \begin{bmatrix}
         0.9512 & 0 \\  0 & 0.9048
     \end{bmatrix}, \quad
     B = \begin{bmatrix}
         4.8770& 4.8770 \\ -1.1895 & 3.5690
     \end{bmatrix}, \quad Q = \begin{bmatrix}
         0.005 & 0 \\ 0 & 0.020
     \end{bmatrix}, \quad R = \begin{bmatrix}
         \frac{1}{3} & 0 \\  0 & 3
     \end{bmatrix},
$$
while we set $L=0$, $r=3$ and
\begin{align*}
A_0^1 &= \begin{bmatrix}
         -0.1 & 0.1 \\  -0.2 & 0.2
     \end{bmatrix}, \quad A_0^2 = \begin{bmatrix}
         1 &  -0.1 \\ 0.5 & 0
     \end{bmatrix}, \quad A_0^3 = \begin{bmatrix}
         0 & -0.2 \\ 0.2 & 0.5
     \end{bmatrix}, \\
B_0^1 &= \begin{bmatrix}
         0 & -0.1 \\  0.1 & 0
     \end{bmatrix}, \quad B_0^2 = \begin{bmatrix}
         0.5 & 1 \\ -0.1 & 0.2
     \end{bmatrix}, \quad B_0^3 = \begin{bmatrix}
         1 & -1 \\ -0.2 & 1
     \end{bmatrix}.
\end{align*}
\end{example}

\begin{example} \label{example:12}
$A$, $B$, $Q$ and $R$ are taken from \cite[Example 12]{chfl:2005}:
\begin{align*}
&A = \varepsilon \begin{bmatrix}
         \frac{7}{3} & \frac{2}{3} & 0 \\
         \frac{2}{3} & 2 & - \frac{2}{3} \\
         0 & -\frac{2}{3} & \frac{5}{3}
    \end{bmatrix}, \quad
    Q = \begin{bmatrix}
         (4\varepsilon+4+\varepsilon^{-1})/9 &  2(2\varepsilon-1-\varepsilon^{-1})/9 & 2(2-\varepsilon-\varepsilon^{-1})/9 \\
         2(2\varepsilon-1-\varepsilon^{-1})/9 & (1+4\varepsilon+4/\varepsilon)/9  &    2(-1-\varepsilon+2/\varepsilon)/9 \\
                      2(2-\varepsilon-\varepsilon^{-1})/9 &  2(-1-\varepsilon+2/\varepsilon)/9  &   (4+\varepsilon+4/\varepsilon)/9
    \end{bmatrix}, \\
&B = \frac{1}{\sqrt{\varepsilon}} I_3, \quad R = I_3,
\end{align*}
where $\varepsilon = 0.01$, while we set $L=0$, $r=1$ and
$$
A_0^1 =   0.1  \begin{bmatrix}
         0.1 & -0.1 & 0.01 \\  -0.2 & 0.1 & -0.1 \\  0.05 & -0.01 & 0.3
    \end{bmatrix},  \quad
\quad B_0^1 = 0.1 \begin{bmatrix}
         0 & 0 & 0.2 \\ 0.36 &  -0.6 & 0 \\  0 & -0.95 & -0.032
    \end{bmatrix}.
$$
\end{example}

\begin{example} \label{example:Weng}
$A$, $B$, $Q$, $L$ and $R$ are taken from \cite{weph:2021}:
\begin{align*}
        A &= \begin{bmatrix} 0.9512 & 0 \\  0 & 0.9048 \end{bmatrix}, \quad
        B = \begin{bmatrix} 4.8770 & 4.8770 \\ -1.1895& 3.5690 \end{bmatrix}, \quad Q = \begin{bmatrix} 0.0028 & -0.0013 \\ -0.0013 & 0.0190 \end{bmatrix}, \\
        R &= \begin{bmatrix} 1/3 & 0 \\ 0 & 3\end{bmatrix}, \quad L = 0,
\end{align*}
while $r=1$ and we modify $A_0^1$ and $B_0^1$ in \cite{weph:2021} to
$$
A_0^1  = 6.5 \begin{bmatrix} 0.1 & 0.2 \\ 0.2 & 0.1 \end{bmatrix}, \quad B_0^1 = 6.5 I_2.
$$
\end{example}

\begin{example} \label{example:rho_NT_GT_1}
$A$, $B$, $Q$ and $R$ are taken from \cite[Example 11]{chfl:2005}:
$$
A = \begin{bmatrix} 3- \varepsilon & 1 \\ 4 & 2-\varepsilon \end{bmatrix}, \quad
B = \begin{bmatrix} 1 \\ 1 \end{bmatrix}, \quad
Q = \begin{bmatrix}
        4\varepsilon-11 & 2\varepsilon-5 \\
        2\varepsilon-5 & 2\varepsilon-2
    \end{bmatrix}, \quad
R = 1,
$$
where $\varepsilon = 5$, while  we set $L=0$, $r=1$ and
$$
A_0^1  = \begin{bmatrix} 0.1 & -0.1 \\ -0.2 & 0.1 \end{bmatrix}, \quad
B_0^1  =  \begin{bmatrix} 0.1 \\ 0 \end{bmatrix}.
$$
\end{example}

\setlength{\tabcolsep}{.2em}
\begin{table}[t]
  \centering
  \begin{tabular}{|l|c|c|c|c|c|c|c|c|c|} \hline
    &  & \multicolumn{2}{c|}{$\NT_1$} & \multicolumn{2}{c|}{$\NT_2$ (\NT +lyap)} & \multicolumn{2}{c|}{$\NT_3$ (\NT +smith)} &  \multicolumn{2}{c|}{init. (\FP\SDA)} \\ \cline{3-10}
    & \FP\SDA & \#itn. & sol. err. &  \#itn. & sol. err. & \#itn. & sol. err. & $\delta$ {\scriptsize in \eqref{eq:NT-initial-to-get}} & \#itn.  \\ \hline
\Cref{example:1}    & (19,21) &  6 & $3.7{\scriptstyle (-14)}$  & (6,23) & $3.7{\scriptstyle (-14)}$  & (6,28,28) & $3.7{\scriptstyle (-14)}$  & $0.5$ & (1,2) \\
\Cref{example:12}   & (10,41) &  3 & $4.3{\scriptstyle (-15)}$ & (3,8)  & $4.4{\scriptstyle (-15)}$ & (3,11,11) & $9.4{\scriptstyle (-15)}$ & $0.5$ & (1,5) \\
\Cref{example:Weng} & (23,24) &  5 & $1.8{\scriptstyle (-13)}$  & (5,30) & $1.9{\scriptstyle (-13)}$ & (5,30,30) & $1.8{\scriptstyle (-13)}$ & $10^{-2}$ & (4,5) \\
\Cref{example:rho_NT_GT_1} & (8,8) & 3 & $2.0{\scriptstyle (-14)}$ & (3,8) & $2.1{\scriptstyle (-14)}$ & (3,10,10) & $2.0{\scriptstyle (-14)}$ & $0.5$ & (1,1) \\ \hline
  \end{tabular}
  \caption{The numbers of outer and inner iterations  and solution error against the one by \FP\SDA, for solving \SCARE\ \eqref{eq:SCARE-0}.
  For an inner iteration, it is the total number,
  e.g., the total number of \SDA\ iterations for \Cref{example:1} by \FP\SDA\ is 21. The iteration scheme of $\NT_3$ has 3 levels and
  hence each run of $\NT_3$ produces three numbers, e.g., for \Cref{example:1}, $\NT_3$ consumes 6 Newton steps, a total of 28 Lypunov equations
  solved by Smith's method and, altogether, a total of 28 Smith iterations are executed. The ``sol. err.'' is calculated
  as in \eqref{eq:solerr} and listed in such a way that, e.g., $3.7{\scriptstyle (-14)}$ represents $3.7\cdot 10^{-14}$.}
  \label{tab:perf-stat}
\end{table}

\Cref{fig:easy-Egs} plots the iterative histories in terms of normalized residuals
by \FP\SDA\ and three variants of Newton's methods.
The initials for three variants of Newton's method are calculated by \FP\SDA\ to satisfy \eqref{eq:NT-initial-to-get} with $\delta$
as specified in \Cref{tab:perf-stat}. It is noted that
$\delta=10^{-2}$ is used for \Cref{example:Weng}, much smaller than $\delta=0.5$ for the other three examples. It turns out that
$\NT_2$ and $\NT_3$ diverge if with  $\delta=0.5$ while
$\NT_1$ still converges with $\delta=0.5$. This suggests that using the fixed-point type scheme for solving Newton step equations may need
more accurate initial than the plain Newton's method.
\Cref{tab:perf-stat} contains performance statistics of the numbers of outer iterations and the total numbers of inner iterations (in each level), and relative solution errors:
\begin{equation}\label{eq:solerr}
\frac {\|X_{\NT}-X_{\FP\SDA}\|_{\F}}{\|X_{\FP\SDA}\|_{\F}}
\end{equation}
for the solutions $X_{\NT}$ by any of the three Newton variants against $X_{\FP\SDA}$ by \FP\SDA. The last column in \Cref{tab:perf-stat}
lists the numbers of outer and inner iterations by \FP\SDA\ to calculate initials for Newton's method to use.

From \Cref{fig:easy-Egs} and \Cref{tab:perf-stat}, we made the following observations:
\begin{enumerate}[(i)]
  \item \FP\SDA\ converges linearly while Newton's method converges quadratically.
  \item On average, there are only a few number of inner-most iterations per equation solving. In the case of \FP\SDA, for all but \Cref{example:12}, each   freezed \CARE\ takes about one \SDA\ iteration which means the approximation $X_0$ at line 3 of \Cref{alg:CARE-SDA} suffices, while for \Cref{example:12}, the average number of \SDA\ iterations per \CARE\ is about $4$.
  \item The stopping criterion at line 8 of \Cref{alg:NT-SCARE} can capture the quadratic convergence of Newton's method at
        a cost of a few number of fixed-point iterations to solve each Newton step equation.
\end{enumerate}

\subsection{Real-World Problems}\label{ssec:hard-examples}

For the next four examples, $A_0^1, \ldots, A_0^r$ and $B_0^1, \ldots, B_0^r$
are constructed randomly with MATLAB's {\tt wgn} function for white Gaussian noises. Hence, they provides infinitely many testing problems.
They are considerably harder than the previous four problems. In particular, \Cref{example:6} is too
big ($n=199$) for $\NT_1$ in which Newton step equations are transformed to linear systems in the standard form and
then solved by Gaussian elimination. More detailed observations will come later.
The initials to the variants of Newton's method are again calculated by \FP\SDA\ such that
\eqref{eq:NT-initial-to-get} is satisfied with $\delta$
as specified in \Cref{tab:perf-stat:hard-examples}.

\begin{example} \label{example:6}
The coefficient matrices, $A$, $B$, $Q$, $R$, and $L$ are taken from a control model of position and velocity for a string of high-speed vehicles \cite{abbe:1999a}:
\begin{align*}
A &= \begin{bmatrix}
          E & F \\ & \ddots & \ddots \\
          & & E & F \\
          & & & E & -1 \\
          & & & & -1
     \end{bmatrix} \in \bbR^{(2m-1) \times (2m-1)} \quad\mbox{with}\quad E = \begin{bmatrix}
         -1 & 0 \\ 1 & 0
     \end{bmatrix}, \quad F = \begin{bmatrix}
         0 & 0 \\ -1 & 0
     \end{bmatrix}, \\
B &= [b_{ij}] \in \bbR^{(2m-1) \times m}\quad\mbox{with}\quad b_{ij} = \begin{cases}
        1,  & \mbox{ for } i = 1, 3, 5, \ldots, 2m-1, j = (i+1)/2,\\
        0, & \mbox{ otherwise},
        \end{cases} \\
Q &= [q_{ij} ]\in \bbR^{(2m-1) \times (2m-1)}\quad\mbox{with}\quad q_{ij} = \begin{cases}
     10, & \mbox{ for } i = 2, 4, 6, \ldots, 2m-1, \ j = i, \\
     0, & \mbox{ otherwise},
     \end{cases} \\
     R &= I_m, \quad L = 0_{(2m-1)\times m},
\end{align*}
where $m$ is the number of vehicles, and hence $n=2m-1$, and
$A_0^1, \ldots, A_0^5$ and $B_0^1, \ldots, B_0^5$ are multiplicative white noises constructed as
\begin{align*}
    A_0^i &= 0.1 \times i \times \frac{\| A \|_{\infty}}{\| \widehat{A}_0^i \|_{\infty}} \widehat{A}_0^i \text{ with }\widehat{A}_0^i = \mbox{\texttt{wgn}}(2m-1,2m-1,8\times i)\,\,   \mbox{ for } i = 1, \ldots, 5, \\
     B_0^i &= 0.15 \times i \times \frac{\| B \|_{\infty}}{\| \widehat{B}_0^i \|_{\infty}} \widehat{B}_0^i\text{ with }\widehat{B}_0^i = \mbox{\texttt{wgn}}(2m-1,m,3\times i)\,\,   \mbox{ for } i = 1, \ldots, 5.
\end{align*}
In our test, $m=100$ yielding $n=199$, which turns out to be too big for $\NT_1$.
\end{example}

\begin{example} \label{example:3}
In \cite{lwhxwl:2023,nakm:2021a}, a 3D missile/target interception engagement with impact angle guidance strategies
is modeled by a  state-dependent
linear-structured dynamic system in the form of \eqref{eq:DSys-SSDC}, where \SDC\ matrices and
white noise matrices are given by
\begin{subequations} \label{eq6.1}
\begin{align}
    A(\bx)&=
    \begin{bmatrix}
    0 & 1 &0 & 0 & 0
    \\ 0 &\frac{-2\dot{\varrho}}{\varrho} & 0 & \frac{-1}{2}\dot{\psi}\sin(2x_1+2\theta_{f}) & \frac{g_1}{z_{a}}
    \\ 0 & 0 & 0 & 1 & 0
    \\0 &2\dot{\psi}\tan(x_1+\theta_{f}) &0 &\frac{-2\dot{\varrho}}{\varrho} &\frac{g_{2}}{z_{a}}
    \\0 &0 &0 &0 &-\eta
    \end{bmatrix}, \quad
    B(\bx)=
    \begin{bmatrix}
    0&0\\
    \frac{-\cos\theta_{M}}{\varrho} &0 \\
    0&0\\
    \frac{\sin\theta_{M}\sin\psi_{M}}{\varrho\cos\theta} &\frac{-\cos\psi_{M}}{\varrho\cos\theta}\\
        0 &0
    \end{bmatrix},\nonumber\\
     A_0^i &= 0.2 \times i \times  \frac{\| A \|_{\infty}}{\| \widehat{A}_0^i \|_{\infty}} \widehat{A}_0^i \ \text{ with }\ \widehat{A}_0^i = \mbox{\texttt{wgn}}(5,5,8\times i)\,\, \mbox{ for } i = 1, \ldots, 4, \label{eq6.1a}\\
     B_0^i &= 0.1 \times i \times  \frac{\| B \|_{\infty}}{\| \widehat{B}_0^i \|_{\infty}} \widehat{B}_0^i \ \text{ with } \ \widehat{B}_0^i = \mbox{\texttt{wgn}}(5,2,3\times i)\,\, \mbox{ for } i = 1, \ldots, 4, \label{eq6.1b}
\end{align}
\end{subequations}
in which $\varrho$ measures the distance between the missile and the target, $\{ \psi, \theta\}$ (respective to $\{ \psi_{M}, \theta_M\}$) are the azimuth and elevation angles (respective to missile) corresponding to the initial frame (respective to the line-of-sight (\LOS) frame).
Furthermore, the state vector $\bx = [x_1, x_2, x_3, x_4, x_5]^{\T}$ satisfies
$$
x_1 = \theta - \theta_f,\,\,
x_2 = \dot{x}_1,\,\,
x_3 = \psi - \psi_f,\,\,
x_4 = \dot{x}_3
$$
with $\theta_f$ and $\psi_f$ being the prescribed final angles, $x_5$ is a slow varying stable auxiliary
variable governed by $\dot{x}_5 = -\eta x_5$ for some $\eta > 0$, $g_1$ and $g_2$ are highly nonlinear
functions of $\varrho$, $\theta$, $\psi$, $\theta_f$, $\psi_f$, $\theta_T$, $\psi_T$, $a_T^z$ and $a_T^y$
(see \cite{nakm:2021a} for details), where $\{\psi_T, \theta_T\}$ are the azimuth and elevation angles of
the target to the \LOS\ frame, $a_T^z$ and $a_T^y$ are the lateral accelerations for the target,
and $(u_1, u_2) = (a_M^z, a_M^y)$ is the control vector for the maneuverability of missile.

As we explained in \Cref{sec:intro}, the \SSDRE\ approach will repeatedly solve stochastic
state-dependent \CARE\
\eqref{eq:SSDCARE} at a number of fixed state $\bx$. In this test, we will consider one such equation
as an \SCARE\ \eqref{eq:SCARE-0} with
\begin{align*}
     A &= \begin{bmatrix}
          0 & 1          & 0 & 0           & 0 \\
          0 & 0.0696 & 0 & -0.0307 & -1.91 \times 10^{-4} \\
          0 & 0          & 0 & 1           & 0 \\
          0 & 0.123   & 0 & 0.0696 & 6.13 \times 10^{-4} \\
          0 & 0         & 0 & 0           & -0.1
     \end{bmatrix}, \quad B = \begin{bmatrix}
          0 & 0 \\ -9.13 \times 10^{-5} & 0 \\ 0 & 0 \\ 2.42 \times 10^{-5} & -1.30 \times 10^{-4} \\ 0 & 0
     \end{bmatrix}, \\
     Q &= {\rm diag}(1000, 1000, 1000, 1000, 0),  \quad R = I_2, \quad L = 0, \quad A_0^i \text{ and }B_0^i\text{ as in \eqref{eq6.1}.}
\end{align*}
\end{example}

\begin{example} \label{example:F16}
The F16 aircraft flight control system \cite{cpws:2022} can be described as
a linear-structured dynamic system in the form of \eqref{eq:DSys-SSDC}:
\begin{align*}
      \dot{\bx}\equiv \begin{bmatrix}\dot{u}\\
      \dot{v}\\
      \dot{w}\\
      \dot{p}\\
      \dot{q}\\
      \dot{\varrho}\end{bmatrix} = & \begin{bmatrix}
           (g \sin\theta)/u & 0 & 0 & 0 & -w & v \\
           (-g\sin \phi \cos \theta)/u & 0 & 0 & w & 0 & -u \\
           (-g\cos \phi \cos \theta)/u & 0 & 0 & -v & u & 0 \\
           0 & 0 & 0 & c_1q/2 & (c_1p+c_2\varrho)/2 & c_2 q/2 \\
           0 & 0 & 0 & c_3 p + c_4\varrho/2 & 0 & c_4p/2 - c_3 \varrho \\
           0 & 0 & 0 & c_5q/2 & (c_5p+c_6\varrho)/2 & c_6 q/2
      \end{bmatrix}\begin{bmatrix}u\\
      v\\
      w\\
      p\\
      q\\
      \varrho\end{bmatrix}\\ &+\begin{bmatrix}
          1/\mu & 0 & 0 & 0 \\
          0 & 0 & 0 & 0 \\
          0 & 0 & 0 & 0 \\
          0 & c_{lp} & 0 & c_{\nu p} \\
          c_{\mu q} Z_{TP} & 0 & c_{\mu q} & 0 \\
          0 & c_{l\varrho} & 0 & c_{\nu\varrho}
      \end{bmatrix} \begin{bmatrix}F_T\\
      L\\
      M\\
      N\end{bmatrix}
      \equiv A(\bx) \bx + B(\bx) \bu,
\end{align*}
where $g$ is the gravity force, $\mu$ is the aircraft
mass, $(u, v, w)$ and $(p, q, \varrho)$ are the aircraft velocity and angular velocity vectors, respectively, for roll $\phi$, pitch $\theta$ and yaw $\psi$ angles, parameters $c$ with various subscripts are the suitable combinations of coefficients of the aircraft inertial matrix, the control vector $\bu=[F_T,L,M,N]^{\T}$ consists of the thrust $F_T$ and the aircraft moment vector $[L, M, N]^{\T}$, and $Z_{TP}$ is the position of the thrust point.

In this test, we consider \SCARE\ \eqref{eq:SCARE-0}, from solving the optimization problem \eqref{eq:quad-func}
at a fixed state by the \SSDRE\ approach, with coefficient matrices:
%
    \begin{align*}
        A &= \begin{bmatrix}
            3.958 \times 10^{-5}  &  0 &  0 & 0 & -5.866 & -6.985 \\
            2.116\times 10^{-4}  & 0 & 0 & 5.866  &  0  &  -84.66 \\
            -0.1158  & 0  & 0 & 6.985  & 84.66  &  0 \\
            0 & 0 & 0 & 1.791\times 10^{-4} & 4.303 \times 10^{-3}  &  -5.006 \times 10^{-3} \\
            0 & 0 & 0 & -5.329 \times 10^{-3} &  0  &  -4.259 \times 10^{-2} \\
            0 & 0 & 0 & -4.769 \times 10^{-3} &  3.253 \times 10^{-2} & -1.791 \times 10^{-4}
        \end{bmatrix},  \\ 
        B &= \begin{bmatrix}
            1.076 \times 10^{-4} & 0 & 0 & 0 \\
            0 & 0 & 0 & 0 \\
            0 & 0 & 0 & 0 \\
            0 & 7.780 \times 10^{-5} & 0 & 7.780 \times 10^{-5} \\
            3.964 \times 10^{-6} &  0  &   1.321 \times 10^{-5} & 0 \\
            0 & 1.211 \times 10^{-6} & 0 & 1.171 \times 10^{-5}
        \end{bmatrix},\\ 
        Q &= 5000 I_6, \quad R = 2 \times 10^{-4} I_4, \quad L = 0,\\
        A_0^i &= 0.012 \times i \times  \frac{\| A \|_{\infty}}{\| \widehat{A}_0^i \|_{\infty}} \widehat{A}_0^i \ \text{ with }\ \widehat{A}_0^i = \mbox{\texttt{wgn}}(6,6,100 \times i)\,\, \mbox{ for } i = 1, 2, 3, \\
        B_0^i &= 0.012 \times i \times  \frac{\| B \|_{\infty}}{\| \widehat{B}_0^i \|_{\infty}} \widehat{B}_0^i \ \text{ with }\ \widehat{B}_0^i = \mbox{\texttt{wgn}}(6,4,40 \times i)\,\, \mbox{ for } i = 1, 2, 3.
    \end{align*}
\end{example}

\begin{figure}[t]
\center
\begin{subfigure}[b]{0.42\textwidth}
\center
\includegraphics[width=\textwidth]{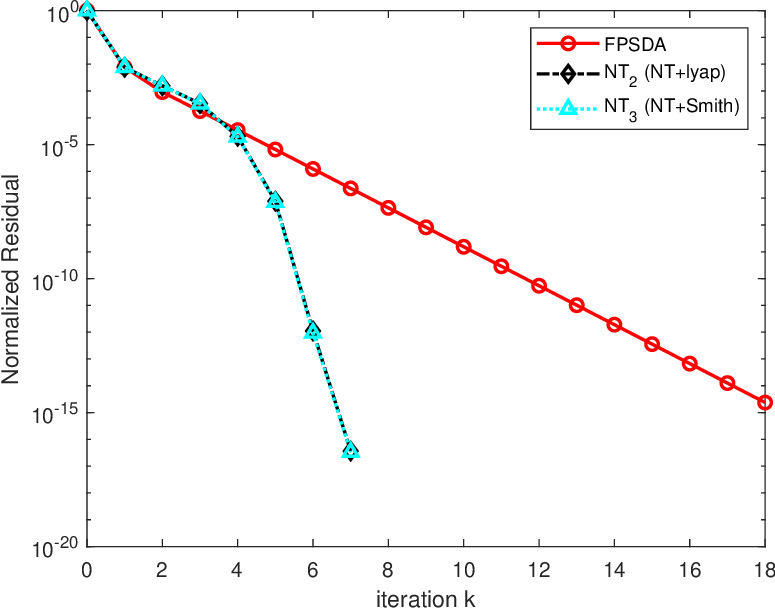}
\caption{Example~\ref{example:6} with $m = 100$}
\label{fig:example6__convergence}
\end{subfigure}\quad
\begin{subfigure}[b]{0.42\textwidth}
\center
\includegraphics[width=\textwidth]{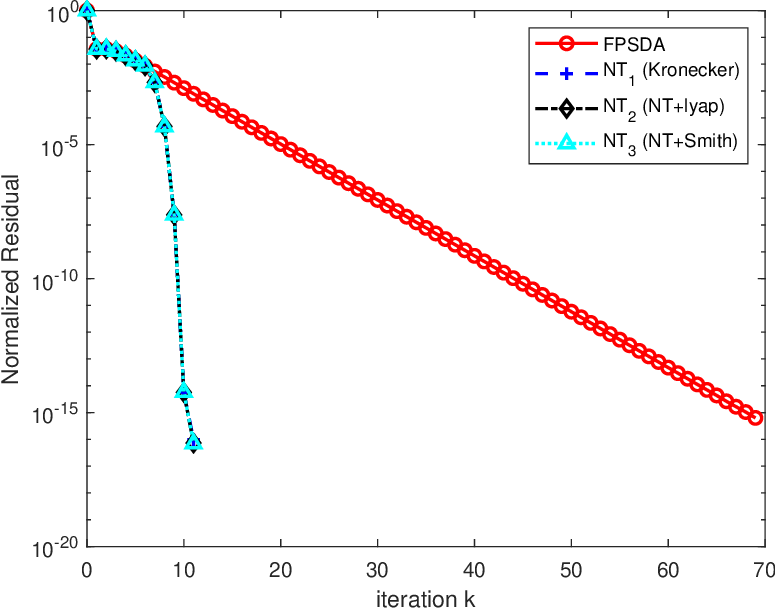}
\caption{Example~\ref{example:3}}
\label{fig:example3__convergence}
\end{subfigure}
\begin{subfigure}[b]{0.42\textwidth}
\center
\includegraphics[width=\textwidth]{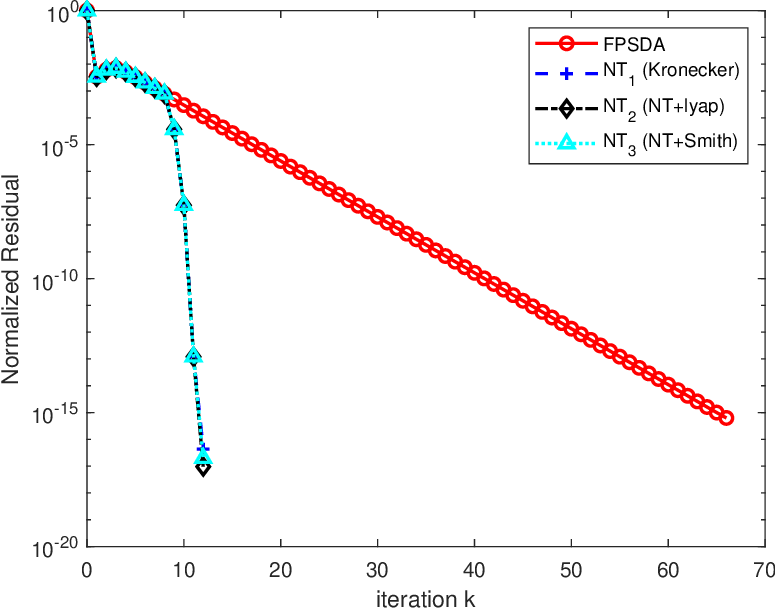}
\caption{Example~\ref{example:F16}}
\label{fig:exampleF16__convergence}
\end{subfigure}\quad
\begin{subfigure}[b]{0.42\textwidth}
\center
\includegraphics[width=\textwidth]{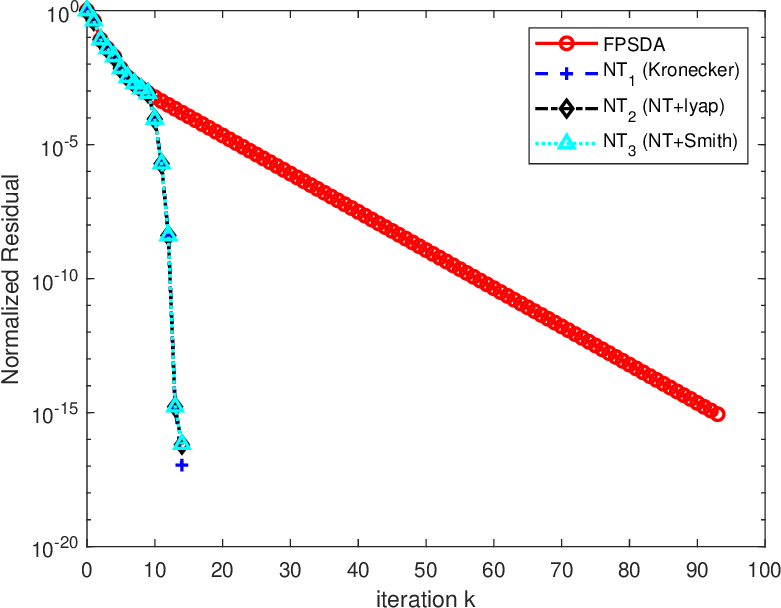}
\caption{Example~\ref{example:Quadrotor}}
\label{fig:exampleQuadrotor__convergence}
\end{subfigure}
\caption{
Iterative histories of \FP\SDA, $\NT_1$ (via Kronecker's reformulation of each Newton step equation),
$\NT_2$ (via MATLAB's {\tt lyap} to iteratively solve Newton step equations),
and $\NT_3$ (via Smith's method to iteratively solve Newton step equations in an outer-inner fashion).
The plotted histories of normalized residual by the variants of Newton's method include the portion by \FP\SDA\
for calculating an initial.
}
\label{fig:hard-Egs}
\end{figure}

\begin{example} \label{example:Quadrotor}
    The \SDRE\ optimal control design for quadrotors for enhancing robustness against unmodeled disturbances \cite{chhu:2022} can be described as in \eqref{eq:DSys-SSDC} and \eqref{eq:quad-func-1} with
    $\bx = [u, v, w, p, q, \varrho, \phi, \theta, z_a]^{\T}$, 
    \begin{align*}
        A(\bx) &= \begin{bmatrix}
            0 & \frac{\varrho}{2} & -\frac{q}{2} & 0 & -\frac{w}{2} & \frac{v}{2} & 0 & -\frac{g(\sin \theta)}{\theta} & 0 \\
            -\frac{\varrho}{2} & 0 & \frac{p}{2} & \frac{w}{2} & 0 & -\frac{u}{2} & \frac{g(1+\cos \theta) \sin \phi}{2\phi} & -\frac{g(1-\cos \theta) \sin \phi}{2\theta} & 0 \\
            \frac{q}{2} & -\frac{p}{2} & 0 & -\frac{v}{2} & \frac{u}{2} & 0 & -\frac{2g}{\phi}\sin^2(\frac{\phi}{2})\cos^2(\frac{\theta}{2}) & -\frac{2g}{\theta} \sin^2(\frac{\theta}{2}) \cos^2(\frac{\phi}{2}) & \frac{g}{z_a} \\
            0 & 0& 0 & 0 & \frac{c_1\varrho}{2} & \frac{c_1q}{2} & 0 & 0 & 0 \\
            0 & 0 & 0 & \frac{c_2\varrho}{2} & 0 & \frac{c_2p}{2} & 0 & 0 & 0 \\
            0 & 0 & 0 & \frac{c_3q}{2} & \frac{c_3p}{2} & 0 & 0 & 0 & 0 \\
            0 & 0 & 0 & 1 & \frac{\sin \phi \tan \theta}{3} & \frac{\tan \theta(1 + \cos \phi)}{3} & \frac{\alpha_1}{3\phi} & \frac{\alpha_2}{3\theta} & 0 \\
            0 & 0 & 0 & 0 & \frac{1+\cos \phi}{2} & -\frac{\sin \phi}{2} & -\frac{\alpha_3}{2\phi} & 0 & 0 \\
            0 & 0 & 0 & 0 & 0 & 0 & 0 & 0 & -\eta
        \end{bmatrix}, \\
        B  &= \begin{bmatrix}
            0 & 0 & -1/\mu & 0 & 0 & 0 & 0 & 0 & 0 \\
            0 & 0 & 0 & {1}/{\cI_x} & 0 & 0 & 0& 0 & 0 \\
            0 & 0 & 0 & 0 & {1}/{\cI_y} & 0 & 0 & 0 & 0 \\
            0 & 0 & 0 & 0 & 0 & {1}/{\cI_z} & 0 & 0 & 0
        \end{bmatrix}^{\T},\\
        Q &= \mbox{\rm diag}(2000, 2000, 3000, 10, 10, 100, 0, 0, 0), \ R = I_4, \ L = 0, \\
        A_0^i &= 0.025 \times i \times  \frac{\| A \|_{\infty}}{\| \widehat{A}_0^i \|_{\infty}} \widehat{A}_0^i\ \text{ with }\ \widehat{A}_0^i = \mbox{\texttt{wgn}}(9,9,10\times i)\,\, \mbox{ for } i = 1, 2, 3, \\
        B_0^i &= 0.01 \times i \times  \frac{\| B \|_{\infty}}{\| \widehat{B}_0^i \|_{\infty}} \widehat{B}_0^i \ \text{ with } \
      \widehat{B}_0^i = \mbox{\texttt{wgn}}(9,4,4\times i)\,\, \mbox{ for } i = 1, 2, 3,
    \end{align*}
    where
    $g$ is the gravity force, $\mu$ is the quadrotor
mass, $(u,v,w)$ and $(p,q,\varrho)$ are the velocity and the angular velocity on the body-fixed frame, respectively, for roll $\phi$, pitch $\theta$ and yaw $\psi$ angles,
\begin{align*}
\alpha_1 &= q\tan \theta \sin \phi - \varrho \tan \theta + \varrho \tan \theta \cos \phi, \\
\alpha_2 &= q \tan \theta \sin \phi + \varrho \tan \theta \cos \phi, \\
\alpha_3 &= q(1-\cos \phi) + \varrho \sin \phi,
\end{align*}
    $z_a$ is a slow varying stable auxiliary variable governed by $\dot{z}_a = -\eta z_a$, $\eta > 0$,
    $c_1 = (\cI_y - \cI_z)/\cI_x$, $c_2 = (\cI_z - \cI_x)/\cI_y$, $c_3 =  (\cI_x - \cI_y)/\cI_z$, and $\cI_x$, $\cI_y$, and $\cI_z$
    are inertia parameters.

With $\mu = 1$,  $\cI_x = \cI_y = 0.01466$, and $\cI_z = 0.02848$, an instance of $A(\bx)$ is
    \begin{small}
    \begin{align*}
        A &= \begin{bmatrix}
            0 & -8.208\mbox{e-}4  & -1.047\mbox{e-}2     &       0 & -1.234\mbox{e-}4 &  1.178  &          0 & -9.8000     &       0 \\
   8.208\mbox{e-}4     &       0  & -1.603\mbox{e-}3 &  1.234\mbox{e-}4    &        0  & 2.203\mbox{e-}2  & 9.800 & -5.436\mbox{e-}4     &       0 \\
   1.047\mbox{e-}2 &  1.603\mbox{e-}3    &        0 & -1.178 & -2.203\mbox{e-}2    &        0    &        0     &       0  &  9.820\mbox{e-}1 \\
            0     &       0     &       0     &       0  & 7.738\mbox{e-}4 &  -9.871\mbox{e-}3  &          0      &      0      &      0 \\
            0      &      0     &       0  & -7.738\mbox{e-}4    &        0  & -1.511\mbox{e-}3 &           0     &       0      &      0 \\
            0      &      0     &       0      &      0     &       0     &       0    &        0     &       0     &       0 \\
            0     &       0     &       0  & 1.000 &  1.386\mbox{e-}8 &  2.499\mbox{e-}4  & 2.617\mbox{e-}6 & -5.464\mbox{e-}4    &        0 \\
            0     &       0     &       0      &      0  & 1.000    &        0  & -9.650\mbox{e-}3      &      0      &      0 \\
            0    &        0    &        0     &       0       &     0      &      0    &        0       &     0  & -0.100
        \end{bmatrix},
    \end{align*}
    \end{small}
yielding an \SCARE\ \eqref{eq:SCARE-0} to solve.
\end{example}

\begin{table}[t]
  \centering
  \begin{tabular}{|l|c|c|c|c|c|c|c|c|c|} \hline
    &  & \multicolumn{2}{c|}{$\NT_1$} & \multicolumn{2}{c|}{$\NT_2$ (\NT +lyap)} & \multicolumn{2}{c|}{$\NT_3$ (\NT +smith)} & \multicolumn{2}{c|}{init. (\FP\SDA)} \\ \cline{3-10}
    & \FP\SDA & \#itn. & sol. err. &  \#itn. & sol. err. & \#itn. & sol. err. & $\delta$ {\scriptsize in \eqref{eq:NT-initial-to-get}} & \#itn.  \\ \hline
\Cref{example:6}    & (18,71) &   &   & (6,25) & $2.8{\scriptstyle (-13)}$  & (6,36,36) & $2.8{\scriptstyle (-13)}$  & $10^{-2}$ & (1,3) \\
\Cref{example:3}   & (69,144) &  5 & $9.5{\scriptstyle (-15)}$ & (5,287)  & $9.8{\scriptstyle (-15)}$ & (5,231,231) & $9.7{\scriptstyle (-15)}$ & $10^{-2}$ & (6,18) \\
\Cref{example:F16} & (66,137) &  4 & $3.3{\scriptstyle (-14)}$  & (4,96) & $3.5{\scriptstyle (-14)}$ & (4,99,99) & $3.5{\scriptstyle (-14)}$ & $10^{-3}$ & (8,21) \\
\Cref{example:Quadrotor} & (93,553) & 5 & $5.8{\scriptstyle (-13)}$ & (5,174) & $6.3{\scriptstyle (-13)}$ & (5,203,203) & $6.4{\scriptstyle (-13)}$ & $10^{-3}$ & (9,49) \\ \hline
  \end{tabular}
  \caption{The numbers of outer and inner iterations  and solution error against the one by \FP\SDA, for solving \SCARE\ \eqref{eq:SCARE-0}.
  For \Cref{example:6}, each Newton's step has a linear system of $199^2$-by-$199^2$ in the standard form to solve and that is too big for most personal PC. That is why there is no result reported for $\NT_1$.
  }
  \label{tab:perf-stat:hard-examples}
\end{table}

As before, \Cref{fig:hard-Egs} plots the iterative histories in terms of normalized residuals
by \FP\SDA\ and three variants of Newton's methods on these four examples, while
\Cref{tab:perf-stat:hard-examples} contains performance statistics of the number of outer iterations and the total number of inner iterations (in each level), and relative solution errors measured as in \eqref{eq:solerr}.

We note again that these examples have random components and hence each call will generate a different \SCARE.
From \Cref{fig:hard-Egs} and \Cref{tab:perf-stat:hard-examples} and from numerous runs on these (random) examples, we observed the following:
\begin{enumerate}[(i)]
  \item With $n=2m-1=199$ in \Cref{example:6}, $\NT_1$ will have to face $199^2$-by-$199^2$ linear systems in the standard form converted from Newton step equations via Kronecker's product. That is too big for everyday PC/laptops such as ours. For that reason, no result is reported on \Cref{example:6} by $\NT_1$.
      On the other hand, all other methods work perfectly well on the example.
  \item \FP\SDA\ converges linearly and the convergence can be slow, as indicated by the large numbers of outer and inner iterations,
        especially for the last three examples.
        Newton's method converges quadratically and consumes very few (outer) numbers of iterations, helped by relatively accurate
        initials by \FP\SDA. But note that both $\NT_2$ and $\NT_3$ requires large number of inner iterations.
  \item For the last three examples, each Newton step equation takes from 24 to 57 fixed-point iterations, i.e., the number of times
        $\what\Pi_k(\cdot)$ in \eqref{eq:NT4SCARE-step} is frozen, but for \Cref{example:6}, the number is between 4 and 6.
        On the other hand, on average, the number of Smith's iteration step is 1.
  \item On average, the number of \SDA\ iterations per \CARE\ in \FP\SDA\ is between 2 and 6.
  \item There are nontrivial chances that \Cref{example:F16,example:Quadrotor} may not have \PSD\ stabilizing solutions as \FP\SDA\
        fail to converge when it is run many times (hence with different random noises). Likely not every random realization of them satisfies
        Assumptions~\ref{asm:stab} and \ref{asm:dete} in \Cref{sec:intro}.
  \item It is reasonable to say that these later four problems are harder than the previous four problems,
        judging from reading the performance statistics in \Cref{tab:perf-stat,tab:perf-stat:hard-examples}.
\end{enumerate}

\section{Conclusions} \label{sec:conclusion}
The state-dependent Riccati equation approach, although suboptimal, is a systematic way to study nonlinear optimal control problems. The basic idea is to formulate a nonlinear optimal control problem into one with
the same linear structure as in the mature linear optimal control theory. Practical applications
have demonstrated its successes. In the approach for systems with stochastic noises,
the so-called stochastic continuous-time algebraic Riccati equation (\SCARE)
$$
A^{\T}X+XA+Q+\Pi_{11}(X)
  -[XB+L+\Pi_{12}(X)][R+\Pi_{22}(X)]^{-1}[XB+L+\Pi_{12}(X)]^{\T}=0,
$$
frequently arises and has to be solved repeatedly, and ideally in real time, e.g., for the 3D missile/target engagement, the F16 aircraft flight control, and the quadrotor optimal control, to name a few. Newton's method had been
called for the task previously, but existing research focuses more on the theoretic aspect than numerical
one, e.g., there is no practical way to pick an initial guess to enure convergence.

In this paper, we propose a robust and efficient inner-outer iterative scheme  to solve the \SCARE\
efficiently and accurately. It is based on
the fixed-point technique and the structure-preserving doubling algorithm (\SDA). The basic idea is to first freeze $X$ in the linear matrix-valued functions $\Pi_{ij}(\cdot)$
at the current approximate solution and then apply \SDA\ to the resulting continuous algebraic Riccati equation
(\CARE). It is proved, among others, that the method is monotonically convergent to the desired stabilizing solution. The new method is called \FP\SDA\ for short.

We revisit Newton's method to investigate  how to calculate each Newton iterative step efficiently and how to select sufficiently accurate initial guesses so that Newton's method can become practical. These are
important issues that have not received much attention previously.

Both methods, ours and Newton's method with our implementation ideas, are applied to a collection of \SCARE, both artificial and real-world ones, to validate our claims and intuitive ideas.

%
%
%

\appendix

\section{Fixed-point iteration}\label{apx:FP}
The basic idea of creating a fixed-point iteration (\FP) is to first transform \SCARE\ \eqref{eq:SCARE-0} equivalently to an equation of the form
\begin{equation}\label{eq:FP-eq}
X=\scrF(X)
\end{equation}
which immediately leads to an iterative scheme:
\begin{equation}\label{eq:FP-general}
\mbox{given $X_0$, iterate
$X_{k+1}=\scrF(X_k)$ for $k=0,1,2,\ldots$
}
\end{equation}
until convergence, if it is convergent.
Guo and Liang~\cite{guli:2023} propose one such $\scrF(\cdot)$ and proved that the associated \FP\ converges.

Guo and Liang~\cite{guli:2023} construct their $\scrF(\cdot)$ as follow.
Factorize $Q - L R^{-1} L^{\T}=\widehat{C}^{\T} \widehat{C}$. In theory, this factorization exists because
$Q - L R^{-1} L^{\T}\succeq 0$. Always $\widehat{C}$ has $n$ columns, but its number of rows is not unique,
except no fewer than the rank of $Q - L R^{-1} L^{\T}\succeq 0$. Numerically, any such a factor $\widehat{C}$ works.
Let
\begin{align*}
      \widehat{A} = A - B R^{-1} L^{\T}, \quad \widehat{B} = B R^{-1/2},
\end{align*}
and let
$P\in \bbR^{nr \times nr}$ and $\what P \in \bbR^{n(r+1) \times n(r+1)}$ be the permutation matrices such that
\begin{align*}
     P^{\T}( X \otimes I_r) P &= I_r \otimes X, \\
     \diag( X, X \otimes I_r ) &= \what P^{\T} ( X \otimes I_{r+1} ) \what P,
\end{align*}
and
\begin{align*}
    \scrA = P \left( \begin{bmatrix}
          A_0^1 \\ \vdots \\ A_0^r
    \end{bmatrix} - \begin{bmatrix}
          B_0^1 \\ \vdots \\ B_0^r
    \end{bmatrix} R^{-1} L^{\T}\right)\in\bbR^{nr\times n}, \quad \scrB = P \begin{bmatrix}
          B_0^1 \\ \vdots \\ B_0^r
    \end{bmatrix} R^{-/12}\in\bbR^{nr\times m}.
\end{align*}
Given $\gamma < 0$ so that $\what A_{\gamma} := \what A + \gamma I_n$ is nonsingular.
\SCARE\ \eqref{eq:SCARE-0} can be transformed into
\begin{equation}\label{eq:guli2023-scrF}
X=\scrF(X):= E_{\gamma}^{\T}(X \otimes I_{r+1}) \left[I_{n(r+1)} + G_{\gamma}(X \otimes I_{r+1})\right]^{-1} E_{\gamma} + H_{\gamma},
\end{equation}
where, with $Z_{\gamma} = \widehat{C} \what A_{\gamma}^{-1} \widehat{B}$,
\begin{subequations} \label{eq:guli2023-scrF-mtx}
\begin{align}
E_{\gamma} &= \what P\begin{bmatrix}
           \what A_{\gamma} - 2 \gamma I_n + \widehat{B} Z_{\gamma}^{\T} \widehat{C} \\
           \sqrt{-2\gamma} (\scrA + \scrB Z_{\gamma}^{\T}\widehat{C})
      \end{bmatrix} (I_n + \what A_{\gamma}^{-1} \widehat{B}Z_{\gamma}^{\T} \widehat{C})^{-1} \what A_{\gamma}^{-1} \in \bbR^{n(r+1) \times n}, \\
H_{\gamma} &= -2 \gamma \what A_{\gamma}^{-\T} \widehat{C}^{\T} ( I_{\ell} + Z_{\gamma} Z_{\gamma}^{\T})^{-1}
              \widehat{C} \what A_{\gamma}^{-1} \in \bbR^{n \times n}, \\
G_{\gamma} &= \what P \begin{bmatrix}
            \sqrt{-2\gamma} \what A_{\gamma}^{-1} \widehat{B} \\
            \scrA \what A_{\gamma}^{-1}\widehat{B} - \scrB
      \end{bmatrix} ( I_m + Z_{\gamma}^{\T} Z_{\gamma})^{-1} \begin{bmatrix}
            \sqrt{-2\gamma} \what A_{\gamma}^{-1} \widehat{B} \\
            \scrA \what A_{\gamma}^{-1} \widehat{B} - \scrB
      \end{bmatrix}^{\T} \what P^{\T} \in \bbR^{n(r+1) \times n(r+1)}.
\end{align}
\end{subequations}
Guo and Liang~\cite{guli:2023} propose to start their \FP\ with $X_0=0$ in \eqref{eq:FP-general} and
prove its convergence for any
$\gamma<0$ such that $\what A_{\gamma}$ is invertible.
Conceivably, the rate of convergence is dependent of the parameter $\gamma$, but it is not clear, even intuitively,
how to choose $\gamma$ for
fast convergence.

Another problematic issue is that $\scrF(\cdot)$ in \eqref{eq:guli2023-scrF} involves the inverse of
$I_{n(r+1)} + G_{\gamma}(X_k \otimes I_{r+1})\in\bbR^{n(r+1) \times n(r+1)}$,
which can be a drag, unless $n(r+1)$ is modest (under $1,000$ or smaller). In what follows, inspired
by the construction of the first standard form (SF1) in \cite{hull:2018}, we will propose another $\scrF$
for \eqref{eq:FP-eq} in what follows.
We begin with pretending $A_{\rmc}(\cdot)$, $G_{\rmc}(\cdot)$, and $H_{\rmc}(\cdot)$ in \eqref{eq:SCARE-CARE-1} are constant matrices
and apply the construction in \cite[section 5.3]{hull:2018} to matrix pencil
\begin{equation}\label{eq:scrHc}
\begin{bmatrix}
A_{\rmc}(X) & -G_{\rmc}(X) \\
-H_{\rmc}(X) & -\big[A_{\rmc}(X)\big]^{\T}
\end{bmatrix} -\lambda I_{2n}.
\end{equation}
Given $\gamma\in\bbR$ such that
$A_{\rmc,\gamma}(X):=A_{\rmc}(X)+\gamma I_n$ is invertible, symbolically following the procedure there, we end up with its SF1 as
\begin{equation}\label{eq:scrHc-SF1}
\begin{bmatrix}
  E_{\rmc}(X) & 0 \\
 -X_{\rmc}(X) & I_n
\end{bmatrix}-\lambda \begin{bmatrix}
                        I & -Y_{\rmc}(X) \\
                        0 & \big[E_{\rmc}(X)\big]^{\T}
                      \end{bmatrix},
\end{equation}
where
\begin{subequations}\label{eq:SF1-scrF-mtx}
\begin{align}
S(X) &=-\big[A_{\rmc,\gamma}(X)\big]^{\T}-H_{\rmc}(X)\,[A_{\rmc,\gamma}(X)]^{-1}G_{\rmc}(X), \\
E_{\rmc}(X) &= I_n+2\gamma [S(X)]^{-1}, \\
X_{\rmc}(X) &= 2\gamma [S(X)]^{-1}H_{\rmc}(X)[A_{\rmc,\gamma}(X)]^{-1}, \\
Y_{\rmc}(X) &=-2\gamma [A_{\rmc,\gamma}(X)]^{-1} G_{\rmc}(X)[S(X)]^{-1}.
\end{align}
\end{subequations}
Finally, the primal equation associated with \eqref{eq:scrHc-SF1} is given by \cite[p.27]{hull:2018}
\begin{equation}\label{eq:SF1-scrF}
X=\scrF(X):=X_{\rmc}(X)+E_{\rmc}(X)\,X[I_n-Y_{\rmc}(X)\,X]^{-1}E_{\rmc}(X)
\end{equation}
which is equivalent to \eqref{eq:SCARE-CARE-0} and, hence, \SCARE\ \eqref{eq:SCARE-0}. We make a couple of comments regarding
the \FP\ based on \eqref{eq:SF1-scrF} as follows.
\begin{enumerate}[(1)]
  \item $X_{k+1}=\scrF(X_k)$ with \eqref{eq:SF1-scrF} is in fact the second approximation by \SDA\  (\Cref{alg:CARE-SDA})
        applied to \eqref{eq:SCARE-CARE-1} with $A_{\rmc}(\cdot)$, $G_{\rmc}(\cdot)$, and $H_{\rmc}(\cdot)$ freezed at $X_k$.
        This connection leads to an intuitive way to construct an effective $\gamma$ that varies from
        one iterative step to another by examining the eigenvalues in $\bbC_-$ of the matrix pencil
        \eqref{eq:scrHc} with frozen  $A_{\rmc}(\cdot)$, $G_{\rmc}(\cdot)$, and $H_{\rmc}(\cdot)$, as we discussed in \Cref{sec:CARE-SDA} below.
  \item Each \FP\ iterative step based on \eqref{eq:SF1-scrF} involves three inverses of $n$-by-$n$ matrices. This compares favorably
        with \FP\ based on \eqref{eq:guli2023-scrF} for large $n$. Here is why. Suppose that the computational complexity
        of inverting an $n$-by-$n$ matrix is $cn^3$ where $c$ is some constant. Then the matrix inversion costs per \FP\ iterative step
        is $3cn^3$ for \eqref{eq:SF1-scrF} and $(r+1)^3cn^3$ for \eqref{eq:guli2023-scrF}. The cost ratio is
        $(r+1)^3/3$ which is $8/3=2.67$ for $r=1$ and grows rather fast as $r$ increases.
\end{enumerate}



%
%
%

\section{Structure-Preserving Doubling Algorithm for \CARE}\label{sec:CARE-SDA}
We review the structure-preserving doubling algorithm (\SDA) for \CARE, which will be tailored to serve as the workhorse for our \Cref{alg:SCARE-SDA}.
For more detail, the reader is referred to \cite[chapter 5]{hull:2018}.
\CARE\ has a couple of equivalent forms, but the one we will be considering is as follows:
\begin{equation}\label{eq:CARE}
A^{\T} X + X A -X G X + H = 0,
\end{equation}
where $A$, $G^{\T}=G$, and $H^{\T}=H$ are in $\bbR^{n\times n}$.

\begin{theorem}[{e.g., \cite[p.330]{zhdg:1995}}]\label{thm:CARE}
Suppose that $G\succeq 0$ and  $H\succeq 0$.
If $(A,G)$  is stabilizable and $(H,A)$ is detectable, then \CARE\ \eqref{eq:CARE} has a unique positive semidefinite (\PSD) solution $X_*$, and, moreover, the solution is stabilizing, i.e., $\eig(A-GX_*)\subset\bbC_-$, the left half of the complex plane.
\end{theorem}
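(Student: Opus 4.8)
The plan is to realize the stabilizing solution through the stable invariant subspace of the Hamiltonian matrix attached to \CARE\ \eqref{eq:CARE},
\[
  \mathcal{H} := \begin{bmatrix} A & -G \\ -H & -A^{\T}\end{bmatrix}\in\bbR^{2n\times 2n}.
\]
With $J := \begin{bmatrix} 0 & I_n \\ -I_n & 0\end{bmatrix}$ one has the Hamiltonian identity $J\mathcal{H} = (J\mathcal{H})^{\T}$, from which $\lambda\in\eig(\mathcal{H})$ implies $-\bar\lambda\in\eig(\mathcal{H})$; thus the spectrum is symmetric about the imaginary axis. First I would record this symmetry, since it is what later forces a clean split of the spectrum.

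The crux, and the step I expect to be the main obstacle, is to show that $\mathcal{H}$ has no purely imaginary eigenvalue. Suppose $\iota\omega$ (with $\omega\in\bbR$) were an eigenvalue with eigenvector of blocks $x,y\in\bbC^n$, i.e.\ $Ax - Gy = \iota\omega x$ and $-Hx - A^{\T}y = \iota\omega y$. Left-multiplying the first relation by $y^{\HH}$, conjugate-transposing the second and multiplying it on the right by $x$, and adding, the $\iota\omega\,y^{\HH}x$ terms cancel and leave $x^{\HH}Hx + y^{\HH}Gy = 0$. Since $G,H\succeq 0$ this forces $Hx = 0$ and $Gy = 0$, whereupon the relations reduce to $Ax = \iota\omega x$ and $A^{\T}y = -\iota\omega y$. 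The PBH detectability test for $(H,A)$ applied to $(x,\iota\omega)$ then gives $x = 0$, and the PBH stabilizability test for $(A,G)$ applied to the left eigenvector $y$ gives $y = 0$, contradicting that the eigenvector is nonzero. Hence $\mathcal{H}$ has no imaginary eigenvalue, and by the spectral symmetry exactly $n$ eigenvalues lie in $\bbC_-$.

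With the spectrum split, I would choose a basis $\begin{bmatrix}U\\V\end{bmatrix}\in\bbC^{2n\times n}$ of the stable invariant subspace, so that $\mathcal{H}\begin{bmatrix}U\\V\end{bmatrix} = \begin{bmatrix}U\\V\end{bmatrix}\Lambda$ with $\eig(\Lambda)\subset\bbC_-$. The Hamiltonian identity makes $P := U^{\HH}V - V^{\HH}U$ satisfy $\Lambda^{\HH}P + P\Lambda = 0$, whose only solution is $P = 0$ because $\eig(\Lambda)\subset\bbC_-$; thus $U^{\HH}V$ is Hermitian. To see that $U$ is invertible I would prove $\ker U$ is $\Lambda$-invariant: for $Ux = 0$ the first block $AU - GV = U\Lambda$ gives $-GVx = U\Lambda x$, and pairing with $Vx$ using $U^{\HH}V = V^{\HH}U$ yields $GVx = 0$, hence $U\Lambda x = 0$. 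A nonzero $\ker U$ would then, through the second block, furnish a left eigenvector of $A$ with a right-half-plane eigenvalue that is annihilated by $G$, contradicting stabilizability. Setting $X_* := VU^{-1}$ (symmetric, as $U^{\HH}V$ is Hermitian and a real basis may be taken since $\mathcal{H}$ is real), the block equations give $A - GX_* = U\Lambda U^{-1}$, so $\eig(A - GX_*) = \eig(\Lambda)\subset\bbC_-$, and a short substitution into the two blocks confirms that $X_*$ solves \CARE\ \eqref{eq:CARE}.

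Finally, positive semidefiniteness and uniqueness both reduce to Lyapunov arguments already available here. Rewriting the equation as $(A - GX_*)^{\T}X_* + X_*(A - GX_*) = -(H + X_*GX_*)\preceq 0$ and invoking \Cref{lm:LyapnovEq-PSD-prop} with the stable matrix $A - GX_*$ yields $X_*\succeq 0$. For uniqueness among stabilizing solutions, subtracting the equations for $X_1,X_2$ gives $(A - GX_1)^{\T}\Delta + \Delta(A - GX_2) = 0$ with $\Delta := X_1 - X_2$; as both $A - GX_i$ are stable, $\eig\big((A-GX_1)^{\T}\big)$ and $\eig\big(-(A-GX_2)\big)$ are disjoint, so $\Delta = 0$. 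To upgrade this to uniqueness among all \PSD\ solutions I would check that every \PSD\ solution $X$ is automatically stabilizing: if $(A - GX)v = \lambda v$ with $\Re\lambda\ge 0$, then $2\Re\lambda\,v^{\HH}Xv = -v^{\HH}(H + XGX)v$ forces $Hv = 0$ and $GXv = 0$, so $Av = \lambda v$ with $Hv = 0$ and $\Re\lambda\ge 0$, which detectability rules out unless $v = 0$.
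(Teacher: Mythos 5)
Your proof is correct, but it follows a genuinely different route from the paper's. The paper does not prove \Cref{thm:CARE} from scratch at all: it factors $G=BB^{\T}$ and $H=C^{\T}C$ (possible since $G,H\succeq 0$), observes that $(A,G)$ is stabilizable if and only if $(A,B)$ is and that $(H,A)$ is detectable if and only if $(C,A)$ is, and then invokes Corollary 13.8 of \cite[p.330]{zhdg:1995} verbatim. You instead reconstruct the classical Hamiltonian invariant-subspace proof: spectral symmetry of $\mathcal{H}$, exclusion of imaginary-axis eigenvalues by combining $G,H\succeq 0$ with the PBH tests, isotropy of the stable invariant subspace (so $U^{\HH}V$ is Hermitian), invertibility of $U$ from stabilizability, then $X_*=VU^{-1}$ together with Lyapunov/Sylvester arguments for $X_*\succeq 0$, for uniqueness among stabilizing solutions, and for the fact that every \PSD\ solution is automatically stabilizing --- this last point is exactly what upgrades uniqueness to the class of all \PSD\ solutions, and you correctly include it. What your approach buys is self-containedness (it works directly with $G,H\succeq 0$, needing neither the factorization nor the external corollary) and it exposes the structure that underlies the \SDA\ solver of \Cref{sec:CARE-SDA}; what the paper's reduction buys is brevity. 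One step you compress: in the $\ker U$ argument, if $x\in\ker U$ is an eigenvector of $\Lambda$, the vector $Vx$ you produce as an annihilated left eigenvector of $A$ could in principle be zero, in which case the contradiction is with the full column rank of the basis matrix (since then $Ux=0$ and $Vx=0$ with $x\ne 0$) rather than with stabilizability; a one-line case split closes this, so the gap is cosmetic.
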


This is essentially Corollary 13.8 of \cite[p.330]{zhdg:1995} which is stated for $G=BB^{\T}$ and $H=C^{\T}C$  along with the condition that $(A,B)$  is stabilizable and $(C,A)$ is detectable.
\Cref{thm:CARE} is equivalent to the corollary because $G\succeq 0$ and  $H\succeq 0$ imply
that $G=BB^{\T}$ and $H=C^{\T}C$ for some $B\in\bbR^{n \times m}$ and $C\in\bbR^{\ell\times n}$, and because
$(A,G)$  is stabilizable if and only if $(A,B)$  is stabilizable, and $(H,A)$ is detectable if and only if
$(C,A)$ is detectable.

\begin{algorithm}[t]
\caption{\SDA\ for \CARE\   \eqref{eq:CARE}}\label{alg:CARE-SDA}
\begin{algorithmic}[1]
    \REQUIRE $A,\, G,\, H\in\bbR^{n\times n}$ with $G^{\T}=G,\, H^{\T}=H$;
    \ENSURE  $X_*$, the last $X_i$, as the computed solution to \eqref{eq:CARE}.
    \hrule\vspace{1ex}
    \STATE pick an appropriate $\gamma<0$;
    \STATE compute $A_{+\gamma} = A +\gamma I$, $A_{-\gamma} = A -\gamma I$, $S=-A_{+\gamma}^{\T}-H A_{+\gamma}^{-1} G$;
    \STATE compute $E_0 =I+2\gamma S^{-\T}$ and then
           \begin{align*}
           X_0 &=2\gamma S^{-1} H A_{+\gamma}^{-1}, \\
           Y_0 &= A_{+\gamma}^{-1} G-A_{+\gamma}^{-1} GS^{-1}( - H A_{+\gamma}^{-1} G - A_{-\gamma}^{\T});
           \end{align*}
    \FOR{$k=0, 1, \ldots,$ until convergence}
         \STATE compute $E_{k+1}= E_k(I_m-Y_kX_k)^{-1}E_k$ and then
         \begin{align*}
         X_{k+1}&= X_k+E_k^{\T}X_k(I_n-Y_kX_k)^{-1}E_k, \\
         Y_{k+1}&= Y_k+E_k(I_n-Y_kX_k)^{-1}Y_kE_k^{\T};
         \end{align*}
    \ENDFOR
    \RETURN last $X_k$ as the computed solution at convergence.
\end{algorithmic}
\end{algorithm}

The unique \PSD\ solution $X_*\succeq 0$ mentioned in \Cref{thm:CARE} can be efficiently calculated by \SDA\ \cite{hull:2018,lixu:2006}, as outlined
in \Cref{alg:CARE-SDA},
for modest $n$ (up to a couple of thousands so that matrix inversions can be efficiently implemented).
Under the conditions of  \Cref{thm:CARE}, \Cref{alg:CARE-SDA} runs without any breakdown for any $\gamma<0$ such that $A_{+\gamma} := A +\gamma I$ is invertible, i.e.,
all inverses at line 5 exist, and the sequence $\{X_k\}_{k=0}^{\infty}$ is monotonically increasing and convergent:
\begin{equation}\label{eq:CARE-SDA-monotone}
0\preceq X_0\preceq X_1 \preceq \cdots \preceq X_k\to X_*,
\end{equation}
and  the convergence is quadratic.
\SDA\ does not need an initial to begin with and $X_0$ at line 3 can be considered as the first approximation by \SDA.
Also $\{Y_k\}_{k=0}^{\infty}$ converges, too, but to the stabilizing solution of the dual \CARE\
of  \eqref{eq:CARE} \cite[chapter 5]{hull:2018}.

\SDA\ starts by selecting a shift $\gamma <0$. In theory, any $\gamma<0$ will work, but some $\gamma$ is better than others for speedy convergence of \SDA. For the fastest asymptotical rate of convergence, the optimal $\gamma$ is given by \cite{hull:2017,hull:2018}
$$
\gamma:=\arg\min_{\lambda\in\eig(A-GX_*)}\left|\frac {\lambda-\gamma}{\lambda+\gamma}\right|.
$$
Finding this optimal $\gamma$ can be time consuming or outright impractical, and hence is not recommended, but usually a suboptimal one is good enough due to the quadratic convergence of \SDA. Here is an idea from \cite{hull:2017} for small $n$ (under ten or even up to a few hundreds) as in most of our later numerical examples:
\begin{enumerate}[(1)]
  \item calculate the $n$ eigenvalues in $\bbC_-$ of
        $$
        \begin{bmatrix}
          A & -G \\
          -H & -A^{\T}
        \end{bmatrix};
        $$
  \item encircle these $n$ eigenvalues by a rectangle $[a,b]\times [-c,c]$ where $a\le b<0$, and set
        \begin{equation}\label{eq:gamma-suboptimal}
        \gamma=\begin{cases}
                    -\sqrt{b^2+c^2}, &\mbox{if $c^2\ge b(a-b)/2$}, \\
                    -\sqrt{ab-c^2}, &\mbox{if $c^2< b(a-b)/2$}.
                    \end{cases}
        \end{equation}
\end{enumerate}
For more guidelines and discussions on how to pick a suboptimal $\gamma$, the reader is referred to \cite{hull:2017}.
Also required on $\gamma$ in \Cref{alg:CARE-SDA} is that $A_{+\gamma}$ is well-conditioned for its inversion.

When $G=0$, \eqref{eq:CARE} reduces to a Lyapunov's equation
\begin{equation}\label{eq:Lyap}
A^{\T} X + X A + H = 0.
\end{equation}
It is well-known that \eqref{eq:Lyap} always has a unique solution $X_*$ if $\eig(A)\subset\bbC_-$,
and moreover $X_*\succeq 0$ if also $H\succeq 0$. Note now the requirements on stabilizability and detectability as in
\Cref{thm:CARE} are no longer needed. As a special case of \eqref{eq:CARE}, Lyapunov's equation \eqref{eq:Lyap}
can be solved by \SDA, too, and the resulting method is much simpler because now $Y_k=0$ for all $k$. Specifically,
the \SDA\ iteration for \eqref{eq:Lyap} becomes
\begin{subequations}\label{eq:SDA4Lyap}
\begin{align}
& E_0=I-2\gamma A_{+\gamma}^{-1},\,\,
 X_0=-2\gamma  A_{+\gamma}^{-\T}H A_{+\gamma}^{-1},\,\,\mbox{and} \label{eq:SDA4Lyap-1}\\
&E_{k+1}=E_k^2,\,\,X_{k+1}= X_k+E_k^{\T}X_kE_k\,\,\,\mbox{for $k\ge 0$}. \label{eq:SDA4Lyap-2}
\end{align}
\end{subequations}
It coincides with
Smith's method, which broadly is for solving Sylvester's equation \cite{smit:1968,wawl:2012}.

It remains to address how to stop the for-loop in \Cref{alg:CARE-SDA} properly. We refer the reader to \cite{hull:2018,xuxl:2012b} for
more discussions in general. In the case of solving \SCARE\ \eqref{eq:SCARE-0} in \Cref{sec:FP-CARE-SDA},
\Cref{alg:CARE-SDA} is used as an inner iteration to calculate
an update to the current approximation of the stabilizing solution to \eqref{eq:SCARE-0} so that the next approximation is more accurate
after the update. For that purpose, there is no need to solve any intermediate \CARE\ fully accurately in the working precision. In fact,
ideally, the update should be calculated with just enough accuracy so that any more accuracy in the update will not help. Also, in
\Cref{sec:FP-CARE-SDA}, $H=\scrR(\wtd X_k)$ at the $k$th iteration.
In view of this discussion,
in our  use of \Cref{alg:CARE-SDA}, we stop the for-loop if
\begin{equation}\label{eq:CARE-STOP-inner}
\|A^{\T} X_k + X_k A -X_k G X_k + H\|_{\F}\le\tau\cdot \|H\|_{\F}, 
\end{equation}
where $0<\tau<1$ is a preselected error-reducing factor. Unfortunately, optimal $\tau$ is problem-dependent. We tested $\tau=1/2,\,1/4,\,1/8,\,1/10$ and found $\tau=1/8$
provide a good balance to achieve small numbers of inner and outer iterations.
We point out two advantages of \eqref{eq:CARE-STOP-inner} over any of the general purposed criteria discussed in \cite{hull:2018,xuxl:2012b}:
\begin{enumerate}[(1)]
  \item \eqref{eq:CARE-STOP-inner} is cheaper and friendlier to use than general stopping criteria discussed in
        \cite{hull:2018,xuxl:2012b} because it does not require calculating
        $\|A\|_{\F},\, \|G\|_{\F},\,\|H\|_{\F}$, and $\|X_k\|_2$, needed for calculating
        the normalized residual of the \CARE\ at the point.
  \item  \eqref{eq:CARE-STOP-inner} usually stops the for-loop much sooner because of fairly large $\tau$.
\end{enumerate}
In the case of Smith's method \eqref{eq:Lyap} as an inner iteration, it is terminated if
\begin{equation}\label{eq:Lyap-STOP-inner}
\|A^{\T} X_k + X_k A + H\|_{\F}\le\tau\cdot \|H\|_{\F}.
\end{equation}

\section{A modified Newton's method}\label{sec:mNewton}
Guo~\cite{guo:2002a} considered a special case of \SCARE\ \eqref{eq:SCARE-0}: $L = 0$ and $B_0^i = 0$ for $i = 1, \ldots, r$.
For the special case, each Newton step
solves a linear matrix equation having exactly the same  form as  \eqref{eq:NT4SCARE-step}.
Guo proposed a modified Newton method: simply freeze the term $\what\Pi_k(\cdot)$ at $X_k$ and solve
\begin{align}\label{eq:mNT4SCARE-step}
\what A_k^{\T} X_{k+1} + X_{k+1} \what A_k + \what\Pi_k(X_k) + M_k = 0
\end{align}
instead. It does get around the difficulty of solving generalized Lyapunov's equations in Newton's method, but loses
the generic quadratic convergence of the method.

This idea of creating a modified Newton method straightforwardly carries over to our more general \SCARE\ \eqref{eq:SCARE-0}, too.
For the ease of reference, we outline the modified Newton's method as in \Cref{alg:mNT-SCARE}. Its implementation is mostly straightforward,
except that when Lyapunov's equation \eqref{eq:mNT4SCARE-step} in $X_{k+1}$ is solved iteratively, by, e.g., Smith's method, there are a couple of comments worthy mentioning for efficient numerical performance:
\begin{enumerate}[(1)]
  \item \eqref{eq:mNT4SCARE-step} should be solved for an update $Z$ to $X_k$ to yield $X=X_k+Z$:
        $$
        A_k^{\T} Z + Z \what A_k+\underbrace{(A_k^{\T} X_k + X_k \what A_k+C_k)}_{=:\wtd C_k}=0,
        $$
        which is solved iteratively, starting from $Z_0=0$, where $C_k=\what\Pi_k(X_k)+M_k$.
  \item  We may use
        $$
        \|\what A_k^{\T} Z_i + Z_i \what A_k+\wtd C_k\|_{\F}
           \le\tau\cdot\|\wtd C_k\|_{\F},
        $$
        knowing that the modified Newton's method is likely linearly convergent, where ideally $\tau$ is just small enough so that, as an approximation
        to the desired solution of  \SCARE\ \eqref{eq:SCARE-0}, $X_k+Z_i$ is about as accurate as
        if $Z_i$ solves the updating equation exactly, but practical $\tau$ around $1/8$ usually works well.
\end{enumerate}

\begin{algorithm}[t]
\caption{Modified Newton's method (\mNT) for solving \SCARE\ \eqref{eq:SCARE-0}} \label{alg:mNT-SCARE}
\begin{algorithmic}[1]
\REQUIRE $A, Q \in \bbR^{n \times n}$, $B, L  \in \bbR^{n \times m}$, $R = R^{\T} \in \bbR^{m \times m}$, $A_0^i \in \bbR^{n \times n}$, $B_0^i \in \bbR^{n \times m}$ for $i = 1, \ldots, r$, initial $X_0^{\T} = X_0$, and  a tolerance $\varepsilon$.
\ENSURE $X_*$, the last $X_k$, as the computed solution to \eqref{eq:SCARE-0}.
        \hrule\vspace{1ex}
\STATE $k = 0$;
%
\WHILE{$\NRes(X_k)>\varepsilon$}
   \STATE $S_k = X_k B + L + \Pi_{12}(X_k)$,  $R_k = \Pi_{22}(X_k) + R$;
   \STATE $\what A_k = A - B R_k^{-1} S_k^{\T}$, $P_k = \begin{bmatrix}
                                                          I \\
                                                          -R_k^{-1}S_k^{\T}
                                                        \end{bmatrix}$,
          $M_k =  P_k^{\T} \begin{bmatrix}
                               Q & L \\ L^{\T} & R
                             \end{bmatrix} P_k$;
   \STATE $C_{k} = P_k^{\T} \Pi(X_{k}) P_k + M_k$;
      \STATE solve $\what A_k^{\T} X + X \what A_k  + C_k  = 0$ for $X$, e.g., by Smith's method, a special case of \SDA\ (see  discussions in \Cref{sec:CARE-SDA} after \Cref{alg:CARE-SDA}) or
             by the Bartels-Stewart method \cite{bast:1972},
              and set $X_{k+1}$ to be the computed solution;
   \STATE Set $k = k + 1$;
\ENDWHILE
\RETURN $X_* = X_k$.
\end{algorithmic}
\end{algorithm}

\subsection{Convergence Analysis} 
\label{sec:conv_mNewton}
Let $\{ X_k \}_{k=0}^{\infty}$ be generated by \mNT\ (\Cref{alg:mNT-SCARE}). A sufficient condition for \Cref{alg:mNT-SCARE} to
be able to generate the entire sequence is $\eig(\what A_k)\subset\bbC_-$ for all $k$.

For the special case: $L = 0$ and $B_0^i = 0$ for $i = 1, \ldots, r$,
Guo established the following convergence theorem, assuming that $X_{k+1}$ is the exact solution of \eqref{eq:mNT4SCARE-step}.

\begin{theorem}[{\cite{guo:2002a}}]\label{thm:mNT-guo2002a}
Consider \SCARE\ \eqref{eq:SCARE-0} with $L = 0$ and $B_0^i = 0$ for $i = 1, \ldots, r$, and let $X_* \succeq 0$ be a solution.
Suppose \Cref{alg:mNT-SCARE} starts with initial $X_0$ that satisfies
$X_0 \succeq X_*$, $A_0 - G_0 X_0$ is stable and $\scrR(X_0) \preceq 0$.
Then
\begin{enumerate}[{\rm (a)}]
  \item $\eig(\what A_k)\subset\bbC_-$ and $\scrR(X_k) \preceq 0$ for all $k\ge 0$,
  \item $X_0 \succeq X_1 \succeq \cdots \succeq X_k \succeq X_*$ for all $k\ge 0$, and
  \item $\lim_{k \to \infty} X_k = X_{*+}$, a maximal solution of \SCARE\ \eqref{eq:SCARE-0}.
\end{enumerate}
\end{theorem}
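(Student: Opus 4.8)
The plan is to mirror the monotone-decreasing analysis of \Cref{thm:FPSDA-mono-decr}, adapted to the Kleinman-type iteration in which each outer step solves a Lyapunov equation rather than a full \CARE. First I would record the simplifications forced by $L=0$ and $B_0^i=0$: then $\Pi_{12}\equiv 0$ and $\Pi_{22}\equiv 0$, so that $R_{\rmc}(X)\equiv R$, $L_{\rmc}(X)\equiv 0$, $A_{\rmc}(X)\equiv A$, and $G_{\rmc}(X)\equiv G:=BR^{-1}B^{\T}$ are all independent of $X$, while $H_{\rmc}(X)=Q+\Pi_{11}(X)$. Consequently $\what A_k=A-GX_k$, $\what\Pi_k(\cdot)=\Pi_{11}(\cdot)$, $M_k=Q+X_kGX_k$, and the modified-Newton step \eqref{eq:mNT4SCARE-step} reduces to the Lyapunov equation
\begin{equation*}
\what A_k^{\T}X_{k+1}+X_{k+1}\what A_k+\Pi_{11}(X_k)+Q+X_kGX_k=0 .
\end{equation*}

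The core is an induction on $k$ that simultaneously establishes that $\what A_k$ is stable, $X_{k+1}\succeq X_*$, $X_k\succeq X_{k+1}$, and $\scrR(X_k)\preceq 0$; the base case $k=0$ is exactly the hypothesis. The engine consists of three identities obtained by subtracting the defining equations and completing the square in $G$:
\begin{align*}
\what A_k^{\T}(X_k-X_{k+1})+(X_k-X_{k+1})\what A_k &= \scrR(X_k),\\
\what A_k^{\T}(X_{k+1}-X_*)+(X_{k+1}-X_*)\what A_k &= -(X_k-X_*)G(X_k-X_*)-\Pi_{11}(X_k-X_*),\\
\scrR(X_{k+1}) &= -(X_k-X_{k+1})G(X_k-X_{k+1})+\Pi_{11}(X_{k+1}-X_k).
\end{align*}
Granting $\what A_k$ stable, \Cref{lm:LyapnovEq-PSD-prop} applied to the first identity (with $\scrR(X_k)\preceq0$) gives $X_k\succeq X_{k+1}$; applied to the second (whose right-hand side is $\preceq0$ since $G\succeq0$ and $X_k\succeq X_*$) gives $X_{k+1}\succeq X_*$; and the third then shows $\scrR(X_{k+1})\preceq0$. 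This advances every part of the induction except the stability claim.

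Propagating stability is the main obstacle. For this I would use the identity $\what A_{k+1}^{\T}X_{k+1}+X_{k+1}\what A_{k+1}=\scrR(X_{k+1})-X_{k+1}GX_{k+1}-Q-\Pi_{11}(X_{k+1})\preceq0$ together with $X_{k+1}\succeq0$; this already confines $\eig(\what A_{k+1})$ to $\overline{\bbC_-}$, and the real difficulty is ruling out eigenvalues on the imaginary axis. If $\what A_{k+1}v=\lambda v$ with $\Re(\lambda)\ge0$, the inequality forces each nonpositive term on the right to vanish against $v$, whence $GX_{k+1}v=0$ (so $Av=\lambda v$) and $Qv=0$ (so $Cv=0$, where $C^{\T}C=Q$). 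A detectability hypothesis on $(C,A)$ — part of the standing assumptions \eqref{eq:assume-always} and used identically in \Cref{lm:detectable} — then yields $v=0$, a contradiction, so $\what A_{k+1}$ is stable. This contradiction step is precisely the analogue of the ones in the proof of \Cref{thm:FPSDA-mono-incr}, and is where the problem structure must genuinely enter.

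Finally, for part (c), the sequence $\{X_k\}$ is monotonically decreasing and bounded below by $X_*\succeq0$, hence converges to some $\widehat X\succeq X_*$. Passing to the limit in the Lyapunov step equation, using continuity of $\Pi_{11}(\cdot)$, shows $\scrR(\widehat X)=0$, so $\widehat X$ solves \SCARE. Since every $\what A_k$ is stable and $\what A_k\to A-G\widehat X$, the limit satisfies $\eig(A-G\widehat X)\subset\overline{\bbC_-}$; because the maximal solution is the unique \PSD\ solution whose closed-loop spectrum lies in the closed left half-plane and dominates all others, I conclude $\widehat X=X_{*+}$.
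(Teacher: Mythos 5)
The paper itself does not prove this statement---it is quoted from Guo \cite{guo:2002a} (the paper's own proofs in this appendix, \Cref{lm:mNT-stablility,thm:mNT-mono-decr,thm:mNT-mono-incr}, concern the general case and carry \emph{extra} hypotheses)---so your argument has to stand on its own against the hypotheses as stated. Your reduction of \eqref{eq:mNT4SCARE-step} to the Lyapunov equation and your three identities are all correct, and, granted stability of every $\what A_k$, they do yield $X_0\succeq X_1\succeq\cdots\succeq X_*$ and $\scrR(X_k)\preceq 0$. The genuine gap is the stability-propagation step: you invoke detectability of $(C,A)$, imported from the standing assumptions \eqref{eq:assume-always}, but \eqref{eq:assume-always} is \emph{not} a hypothesis of \Cref{thm:mNT-guo2002a}. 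The theorem assumes only that $X_*\succeq 0$ is a solution, $X_0\succeq X_*$, $\what A_0$ is stable, and $\scrR(X_0)\preceq 0$; the paper imposes \eqref{eq:assume-always} only ``going forward'' (for \Cref{thm:mNT-mono-decr,thm:mNT-mono-incr}) and explicitly remarks that under those assumptions $X_{*+}=X_*$, which shows detectability is deliberately absent here. Concretely, take $n=m=1$, $A=0$, $G=1$, $Q=0$, $\Pi_{11}\equiv 0$, $X_*=0$, $x_0>0$: all hypotheses hold, the theorem is true ($x_{k+1}=x_k/2\to 0$), yet $(Q,A)=(0,0)$ is not detectable, so your contradiction argument cannot close.

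The repair needs no detectability and is hiding in your own algebra. Combining the step equation with $\what A_{k+1}=\what A_k+G(X_k-X_{k+1})$ gives
\begin{equation*}
\what A_{k+1}^{\T}X_{k+1}+X_{k+1}\what A_{k+1}
=-\Pi_{11}(X_k)-Q-X_{k+1}GX_{k+1}-(X_k-X_{k+1})G(X_k-X_{k+1}).
\end{equation*}
If $\what A_{k+1}v=\lambda v$ with $\Re(\lambda)\ge 0$ and $v\ne 0$, then, since $X_{k+1}\succeq X_*\succeq 0$, every \PSD\ term on the right must annihilate $v$: in particular $GX_{k+1}v=0$ \emph{and} $G(X_k-X_{k+1})v=0$, hence $GX_kv=0$, hence $\what A_kv=Av=\what A_{k+1}v=\lambda v$, contradicting the stability of $\what A_k$ from the induction hypothesis. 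Your grouping of the same right-hand side through $\scrR(X_{k+1})$ conceals exactly the term $(X_k-X_{k+1})G(X_k-X_{k+1})$ that makes this common-eigenvector trick work. Separately, your part (c) is also incomplete: your induction compares the iterates only with the particular solution $X_*$ in the hypotheses, so the limit is shown to dominate $X_*$ but not every solution, and the characterization you appeal to (``the maximal solution is the unique \PSD\ solution whose closed-loop spectrum lies in the closed left half-plane and dominates all others'') is circular as stated and is itself the nontrivial content of Guo's theorem; under the bare hypotheses you cannot fall back on the paper's uniqueness theory, which again requires Assumptions~\ref{asm:stab} and \ref{asm:dete}.
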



%
%

Note that, in \Cref{thm:mNT-guo2002a}, $X_{*+}$ is a maximal solution of \SCARE\ \eqref{eq:SCARE-0}.
Going forward, we will assume \eqref{eq:assume-always} as in our convergence analysis for \FP\SDA\ in \Cref{ssec:conv-FPSDA}.
In particular, Assumptions~\ref{asm:stab} and \ref{asm:dete} are assumed and hence \SCARE\ \eqref{eq:SCARE-0} has a unique \PSD\  solution $X_*$, which is stabilizing. Hence, $X_{*+}=X_{*}$ in \Cref{thm:mNT-guo2002a} for the case.

In what follows, we will consider the more general case without requiring $L = 0$ or $B_0^i = 0$ for $i = 1, \ldots, r$. But
we need to assume that $\{ X_k \}_{k=0}^{\infty}$ by \Cref{alg:mNT-SCARE} exists and $X_k \succeq 0$ for all $k$.
Again it is assumed that \eqref{eq:mNT4SCARE-step} is solved exactly.
Under these assumptions, we will show that  $\{ X_k \}_{k=0}^{\infty}$ is monotonically convergent.

By \eqref{eq:SCARE-1b}, \eqref{eq:NT-PiX}, and \eqref{eq:NT-MXk}, and noticing $S_k=X_kB+L_k$, we have
\begin{align}
\what\Pi_k(X_k) + M_k
&=
    \begin{bmatrix}
        I \\ - R_k^{-1} S_k^{\T}
    \end{bmatrix}^{\T}
    \begin{bmatrix}
        Q_k & L_k \\ L_k^{\T} & R_k
    \end{bmatrix}
    \begin{bmatrix}
        I \\ - R_k^{-1} S_k^{\T}
    \end{bmatrix} \nonumber \\
&=Q_k-L_kR_k^{-1} S_k^{\T}-S_kR_k^{-1}L_k^{\T}+S_kR_k^{-1}S_k^{\T} \nonumber \\
&= H_k + X_k B R_k^{-1} B^{\T} X_k, \label{eq:Pi_k+M_k}
\end{align}
where $H_k=H_{\rmc}(X_k)=Q_k-L_kR_k^{-1} L_k^{\T}$.
Hence the modified Newton's iteration equation \eqref{eq:mNT4SCARE-step} is transformed into
\begin{equation}\label{eq:mNT4SCARE-step'}
A_k^{\T} X_{k+1} + X_{k+1} A_k - X_{k+1} G_k X_{k+1} + H_k + (X_{k+1} - X_k) G_k (X_{k+1} - X_k) = 0.
\end{equation}

\begin{lemma} \label{lm:mNT-stablility}
Assume \eqref{eq:assume-always} and suppose $X_k\succeq 0$.
Then $A_k - G_k X_k$ is stable if and only if $X_{k+1} \succeq 0$.
\end{lemma}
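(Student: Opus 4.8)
The plan is to observe that, in spite of its quadratic look, the modified Newton equation \eqref{eq:mNT4SCARE-step'} collapses to a bona fide Lyapunov equation once the quadratic terms are combined. First I would expand $(X_{k+1}-X_k)G_k(X_{k+1}-X_k)$ and cancel it against $-X_{k+1}G_kX_{k+1}$; the surviving cross terms $-X_{k+1}G_kX_k-X_kG_kX_{k+1}$ then merge with $A_k^{\T}X_{k+1}+X_{k+1}A_k$ (using that $G_k^{\T}=G_k$) to produce exactly $\what A_k^{\T}X_{k+1}+X_{k+1}\what A_k$ with $\what A_k=A_k-G_kX_k$. This recasts \eqref{eq:mNT4SCARE-step'} as
\begin{equation*}
\what A_k^{\T}X_{k+1}+X_{k+1}\what A_k+\big(H_k+X_kG_kX_k\big)=0,
\end{equation*}
whose constant term is \PSD: $H_k=H_{\rmc}(X_k)\succeq 0$ by \Cref{lm:detectable} (using $X_k\succeq 0$ and \eqref{eq:ker-cond}), and $X_kG_kX_k\succeq 0$ since $G_k\succeq 0$.

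For the forward implication I would simply invoke \Cref{lm:LyapnovEq-PSD-prop}. If $\what A_k$ is stable, the right-hand side $-(H_k+X_kG_kX_k)\preceq 0$ meets the hypotheses of that lemma, so the Lyapunov equation has a unique solution $X_{k+1}$ and that solution is \PSD, i.e.\ $X_{k+1}\succeq 0$.

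For the converse I plan an argument by contradiction in the exact style of \Cref{thm:FPSDA-mono-incr}. Assuming $X_{k+1}\succeq 0$ but $\what A_k$ not stable, I pick $\by\ne 0$ and $\lambda$ with $\Re(\lambda)\ge 0$ satisfying $\what A_k\by=\lambda\by$. Congruence by $\by^{\HH}(\cdot)\by$ turns the left-hand side of the Lyapunov equation into $2\Re(\lambda)\,\by^{\HH}X_{k+1}\by\ge 0$ while its right-hand side equals $-\by^{\HH}(H_k+X_kG_kX_k)\by\le 0$; hence both vanish, forcing $\by^{\HH}H_k\by=0$ and $\by^{\HH}X_kG_kX_k\by=0$. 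Because $H_k\succeq 0$ and $G_k\succeq 0$, these degeneracies upgrade to $H_k\by=0$ and $G_kX_k\by=0$. The latter gives $A_k\by=(A_k-G_kX_k)\by=\what A_k\by=\lambda\by$, so $(\lambda,\by)$ is an unstable eigenpair of the bare $A_k$ annihilated by $H_k$, contradicting the detectability of $(H_k,A_k)$ furnished by \Cref{lm:detectable}.

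The bookkeeping in the first paragraph (the cross-term cancellation and the appeal to $H_k\succeq 0$) is routine. The main obstacle is the converse, where the decisive step is promoting $\by^{\HH}X_kG_kX_k\by=0$ to $G_kX_k\by=0$ so that $\by$ becomes an eigenvector of $A_k$ rather than of $\what A_k$; this is precisely what lets detectability of $(H_k,A_k)$ close the argument, and it mirrors the mechanism already used in \Cref{thm:FPSDA-mono-incr}.
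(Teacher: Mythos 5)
Your proof is correct and follows essentially the same route as the paper's: both reduce the modified Newton equation to the Lyapunov equation $\what A_k^{\T}X_{k+1}+X_{k+1}\what A_k+(H_k+X_kG_kX_k)=0$ (you via cross-term cancellation in \eqref{eq:mNT4SCARE-step'}, the paper directly via \eqref{eq:Pi_k+M_k}), then apply \Cref{lm:LyapnovEq-PSD-prop} for the forward direction and the same eigenpair-contradiction argument against detectability of $(H_k,A_k)$ for the converse.
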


\begin{proof}
It follows from \Cref{lm:detectable} that $(H_k, A_k)$ is detectable, where $H_k=A_{\rmc}(X_k)$ and $A_k=A_{\rmc}(X_k)$ as before.
Factorize $H_k = C_k^{\T} C_k$.
By \eqref{eq:NT-AXk} and \eqref{eq:Pi_k+M_k}, we have
\begin{align*}
    (A_k - G_k X_k)^{\T} X_{k+1} + X_{k+1} (A_k - G_k X_k) = - (H_k + X_k G_k X_k).
\end{align*}
Recall $\what A_k=A_k - G_k X_k$.
If $\what A_k$ is stable, then $X_{k+1} \succeq 0$ by \Cref{lm:LyapnovEq-PSD-prop}.

Suppose $X_{k+1} \succeq 0$. We now prove $\eig(A_k - G_k X_k)\subset\bbC-$.
Assume, to the contrary,  that $A_k - G_k X_k$ is not stable. Then there exist $\bbC^n\ni\by \ne 0$ and $\Re(\lambda) \ge 0$
such that $(A_k - G_k X_k) \by = \lambda \by$, which implies
\begin{align*}
    0 \le 2 \Re(\lambda) \by^{\HH} X_{k+1} \by = - (\by^{\HH} H_k \by + \by^{\HH}X_k G_k X_k \by).
\end{align*}
Since $H_k, G_k \succeq 0$, we conclude that  $H_k \by = 0$ and $G_k X_k \by = 0$. Consequently,
\begin{align*}
    \lambda \by = (A_k - G_kX_k) \by = A_k \by, \quad \by \ne 0, \quad \Re(\lambda) \ge 0, \quad \mbox{ and } \quad H_k \by = 0,
\end{align*}
contradicting that the detectability of $(H_k, A_k)$ is detectable.
\end{proof}


Our first convergence result is that $\{ X_k \}_{k=0}^{\infty}$ by \Cref{alg:mNT-SCARE} is monotonically decreasing.

\begin{theorem} \label{thm:mNT-mono-decr}
Assume \eqref{eq:assume-always}.
Suppose that initial $X_0$ satisfies
\begin{equation}\label{eq:cond:mNT-mono-decr}
X_0\succeq X_*,\,\,
\eig(A_0-G_0X_0)\in\bbC_-,\,\,
\scrR(X_0) \preceq 0.
\end{equation}
If $X_k\succeq 0$ for all $k$,
then we have
\begin{enumerate}[{\rm (a)}]
    \item $X_0 \succeq X_1 \succeq \cdots \succeq X_k \succeq X_*$, $\scrR(X_k)\preceq 0$ for $k\ge 0$;
    \item $\lim_{k\rightarrow \infty}X_k=X_{*}$.
\end{enumerate}
\end{theorem}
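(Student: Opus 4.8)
The plan is to adapt the argument of \Cref{thm:FPSDA-mono-decr} to the modified Newton iteration, the essential new feature being that each step now solves the \emph{Lyapunov} equation \eqref{eq:mNT4SCARE-step'} rather than a \CARE, so the relevant stable matrix is the frozen closed-loop matrix $\what A_k=A_k-G_kX_k$ and an extra quadratic remainder $(X_{k+1}-X_k)G_k(X_{k+1}-X_k)$ appears. The workhorses will be \Cref{lm:LyapnovEq-PSD-prop} (sign of Lyapunov solutions), \Cref{lm:Omega-incr} (monotonicity of $\Omega$), the reformulation $\scrR(X)=\begin{bmatrix}X&-I\end{bmatrix}\Omega(X)\begin{bmatrix}X\\-I\end{bmatrix}$ of \Cref{lm:SCARE-equiv}, and uniqueness of the \PSD\ solution from \Cref{thm:existence}. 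First I would record that, since $X_k\succeq0$ and $X_{k+1}\succeq0$ for every $k$, \Cref{lm:mNT-stablility} makes $\what A_k$ stable for all $k\ge0$ (for $k=0$ this is also the hypothesis $\eig(A_0-G_0X_0)\subset\bbC_-$); this stability is precisely what lets me invoke \Cref{lm:LyapnovEq-PSD-prop} at each step.

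Next I would prove $X_k\succeq X_*$ for all $k$ by induction, the base case being the hypothesis $X_0\succeq X_*$. Forming the Lyapunov equation for $X_{k+1}-X_*$ out of \eqref{eq:mNT4SCARE-step} and replacing the terms in $X_*$ by $\Lambda_k:=\begin{bmatrix}X_*&-I\end{bmatrix}\Omega(X_k)\begin{bmatrix}X_*\\-I\end{bmatrix}=A_k^{\T}X_*+X_*A_k-X_*G_kX_*+H_k$, a short computation gives $\what A_k^{\T}(X_{k+1}-X_*)+(X_{k+1}-X_*)\what A_k=-\Lambda_k-(X_k-X_*)G_k(X_k-X_*)$. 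Since $X_k\succeq X_*\succeq0$, \Cref{lm:Omega-incr} gives $\Omega(X_k)\succeq\Omega(X_*)$, and because $\scrR(X_*)=\begin{bmatrix}X_*&-I\end{bmatrix}\Omega(X_*)\begin{bmatrix}X_*\\-I\end{bmatrix}=0$ we obtain $\Lambda_k\succeq0$; together with $G_k\succeq0$ the right-hand side is $\preceq0$, so \Cref{lm:LyapnovEq-PSD-prop} yields $X_{k+1}\succeq X_*$.

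The heart of the proof is an interlocking induction establishing simultaneously $\scrR(X_k)\preceq0$ and $X_k\succeq X_{k+1}$. One implication comes from the identity $\what A_k^{\T}(X_{k+1}-X_k)+(X_{k+1}-X_k)\what A_k=-\scrR(X_k)$, read off directly from \eqref{eq:mNT4SCARE-step'}: if $\scrR(X_k)\preceq0$, then \Cref{lm:LyapnovEq-PSD-prop} forces $X_{k+1}\preceq X_k$. For the reverse implication I would rewrite \eqref{eq:mNT4SCARE-step'} in $\Omega$-form as $\begin{bmatrix}X_{k+1}&-I\end{bmatrix}\Omega(X_k)\begin{bmatrix}X_{k+1}\\-I\end{bmatrix}=-(X_{k+1}-X_k)G_k(X_{k+1}-X_k)\preceq0$; then, because $X_k\succeq X_{k+1}\succeq0$ gives $\Omega(X_k)\succeq\Omega(X_{k+1})$ via \Cref{lm:Omega-incr}, we conclude $\scrR(X_{k+1})=\begin{bmatrix}X_{k+1}&-I\end{bmatrix}\Omega(X_{k+1})\begin{bmatrix}X_{k+1}\\-I\end{bmatrix}\preceq0$. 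Starting from the hypothesis $\scrR(X_0)\preceq0$, the chain $\scrR(X_0)\preceq0\Rightarrow X_0\succeq X_1\Rightarrow\scrR(X_1)\preceq0\Rightarrow\cdots$ closes the induction and, with the previous paragraph, proves item (a).

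Finally, for item (b): item (a) shows $\{X_k\}$ is monotonically decreasing and bounded below by $X_*\succeq0$, hence convergent to some $\hat X\succeq X_*$. Passing to the limit in \eqref{eq:mNT4SCARE-step'}, the coefficient matrices tend to $A_{\rmc}(\hat X),G_{\rmc}(\hat X),H_{\rmc}(\hat X)$ by continuity (note $R_{\rmc}(X_k)\succeq R\succ0$ throughout), while the remainder $(X_{k+1}-X_k)G_k(X_{k+1}-X_k)\to0$, leaving $\scrR(\hat X)=0$; uniqueness of the \PSD\ solution (\Cref{thm:existence}) then forces $\hat X=X_*$. The main obstacle I anticipate is not any isolated computation but correctly arranging the interlocking induction and keeping straight which closed-loop matrix is the stable one here, namely $\what A_k=A_k-G_kX_k$ rather than the \CARE\ closed-loop matrix used in \Cref{thm:FPSDA-mono-decr}; the extra quadratic term $(X_{k+1}-X_k)G_k(X_{k+1}-X_k)$ must be routed through the $\Omega$-reformulation, which is exactly what preserves the sign $\scrR(X_{k+1})\preceq0$.
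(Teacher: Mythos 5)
Your proposal is correct and follows essentially the same route as the paper's proof: the same stability lemma for $\what A_k$, the same Lyapunov identities for $X_{k+1}-X_*$ and $X_{k+1}-X_k$, the same use of \Cref{lm:Omega-incr} and \Cref{lm:LyapnovEq-PSD-prop}, and the same $\Omega$-form argument for $\scrR(X_{k+1})\preceq 0$. The only differences are organizational — you decouple the $X_k\succeq X_*$ induction from the interlocking monotonicity/sign induction (the paper runs one combined induction), and you spell out the limiting argument for item (b) that the paper dispatches in one line via uniqueness of the \PSD\ solution.
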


\begin{proof}
By \Cref{lm:mNT-stablility} and by the assumption that $X_k\succeq 0$ for all $k$, we conclude that
$A_k - G_k X_k$ is stable for all $k\ge 0$.
First we show, by induction, that for  $k\ge 0$
\begin{align}\label{eq5.2}
    X_k \succeq X_{k+1},\ \ X_k\succeq X_* \text{ and }\scrR(X_k)\preceq 0,
\end{align}
and, as a result, item (a) holds.
For $k=0$, we already have $X_0\succeq X_*$ and $\scrR(X_0)\preceq 0$ by assumption. It remains to show
that $X_0 \succeq X_1$. To this end, by \eqref{eq:Pi_k+M_k}, we have
\begin{align*}
( A_0 - G_0 X_0)^{\T}(X_1 - X_0) + ( X_1 - X_0)( A_0 - G_0 X_0) =-\scrR(X_0)\succeq 0.
\end{align*}
Since $A_0 - G_0 X_0$ is stable, by \Cref{lm:LyapnovEq-PSD-prop} we conclude that  $X_0\succeq X_1$.
Suppose now that \eqref{eq5.2} is true for $k=\ell$. We will prove  it  for $k=\ell+1$.
Let $k=\ell$ in \eqref{eq:mNT4SCARE-step'} to get
\begin{align*}
( A_{\ell} - G_{\ell} X_{\ell})^{\T}&(X_* - X_{\ell+1}) + ( X_* - X_{\ell+1})( A_{\ell} - G_{\ell} X_{\ell})  \\
  &= \begin{bmatrix}
        X_*\\
        -I
     \end{bmatrix}^{\T} \Omega(X_{\ell})
     \begin{bmatrix}
        X_*\\
        -I
     \end{bmatrix}+(X_*-X_{\ell})G_{\ell}(X_*-X_{\ell})\\
  &\succeq \begin{bmatrix}
        X_*\\
        -I
     \end{bmatrix}^{\T} \Omega(X_*) \begin{bmatrix}
        X_*\\
     -I  \end{bmatrix}+(X_*-X_{\ell})G_{\ell}(X_*-X_{\ell}) \\
  &=(X_*-X_{\ell})G_{\ell}(X_*-X_{\ell})\\
  &\succeq 0,
\end{align*}
where we have used $X_{\ell}\succeq X_*$ which implies $\Omega(X_{\ell})\succeq \Omega(X_*)$ by \Cref{lm:Omega-incr},
and \Cref{lm:SCARE-equiv} and $X_*$ is a solution to \SCARE\ \eqref{eq:SCARE-0}.
Therefore, $X_{\ell+1}\succeq X_*$ by \Cref{lm:LyapnovEq-PSD-prop}.
Next, we show that $X_{\ell+1}\succeq X_{\ell+2}$ and $\scrR(X_{\ell+1})\preceq 0$.
By the induction hypothesis, $X_{\ell}\succeq X_{\ell+1}$ which implies $\Omega(X_{\ell})\succeq \Omega(X_{\ell+1})$ by \Cref{lm:Omega-incr}, and hence
we have
\begin{align*}
\scrR(X_{\ell+1})&= A_{\ell+1}^{\T} X_{\ell+1} + X_{\ell+1} A_{\ell+1} - X_{\ell+1} G_{\ell+1} X_{\ell+1} + H_{\ell+1} \\
  &=\begin{bmatrix}
        X_{\ell+1}\\
        -I
     \end{bmatrix}^{\T} \Omega(X_{\ell+1}) \begin{bmatrix}
        X_{\ell+1}\\
     -I  \end{bmatrix}\\
  &\preceq \begin{bmatrix}
        X_{\ell+1}\\
        -I
     \end{bmatrix}^{\T} \Omega(X_{\ell})
     \begin{bmatrix}
        X_{\ell+1}\\
     -I  \end{bmatrix} \\
  &= - (X_{\ell+1} - X_{\ell}) G_{\ell} (X_{\ell+1} - X_{\ell}) \qquad(\mbox{by \eqref{eq:mNT4SCARE-step'} for $k=\ell$})\\
  &\preceq 0.
\end{align*}
Finally, letting $k=\ell+1$ in \eqref{eq:mNT4SCARE-step'} yields
\begin{align*}
( A_{\ell+1} - G_{\ell+1} X_{\ell+1}) ^{\T}&(X_{\ell+2} - X_{\ell+1}) + ( X_{\ell+2} - X_{\ell+1})( A_{\ell+1} - G_{\ell+1} X_{\ell+1})  \nonumber\\
     &= \begin{bmatrix}
        X_{\ell+2}\\
        -I
     \end{bmatrix}^{\T} \Omega(X_{\ell+1})
     \begin{bmatrix}
        X_{\ell+2}\\
        -I
     \end{bmatrix}-
     \begin{bmatrix}
        X_{\ell+1}\\
        -I
     \end{bmatrix}^{\T} \Omega(X_{\ell+1}) \begin{bmatrix}
        X_{\ell+1}\\
     -I  \end{bmatrix} \\
&\qquad\qquad+(X_{\ell+2}-X_{\ell+1})G_{\ell+1}(X_{\ell+2}-X_{\ell+1})\\
     &=-\begin{bmatrix}
        X_{\ell+1}\\
        -I
     \end{bmatrix}^{\T} \Omega(X_{\ell+1})
     \begin{bmatrix}
        X_{\ell+1}\\
     -I  \end{bmatrix}\\
&=-\scrR(X_{\ell+1})\succeq 0,
\end{align*}
yielding $X_{\ell+1}\succeq X_{\ell+2}$ by \Cref{lm:LyapnovEq-PSD-prop}. The induction process is completed.

Since the \PSD\ solution $X_*$ of \SCARE\ \eqref{eq:SCARE-0} is unique,
item (b) is a corollary of item (a).
\end{proof}

The first condition in \eqref{eq:cond:mNT-mono-decr} requires that the initial $X_0$ is above the unique \PSD\ solution $X_*$ of
\SCARE\ \eqref{eq:SCARE-0}. Next, we consider the case to start at $X_0$ that is below $X_{*}$.

Given $W\in\bbH^{n\times n}$, define linear operator $\scrF_W$ in $\bbH^{n\times n}$ as
\begin{equation}\label{eq:scrF}
\scrF_W(Z)
 :=\begin{bmatrix}
       -I_n \\
     R_{\rmc}(W)^{-1}\big[L_{\rmc}(W)+WB\big]^{\T}
     \end{bmatrix}^{\T}
     \,\Pi(Z)\,
                               \begin{bmatrix}
                              -I \\
                              R_{\rmc}(W)^{-1}\big[L_{\rmc}(W)+WB\big]^{\T}
                          \end{bmatrix}.
\end{equation}
 Not that if $Z\succeq 0$, then $\scrF_W(Z)\succeq 0$ because $\Pi(Z)\succeq 0$ for any $Z\succeq 0$. But we will need something stronger than this in what follows.

\begin{assumption}\label{asm:diffHc>0}
There exists $\alpha >0$ such that
$
\scrF_{X_{*}}(Z)\succeq \alpha\, Z\,\,
\mbox{for all $Z \succeq0$}.
$
\end{assumption}

Given $W\in\bbH^{n\times n}$, define matrix-valued function $F_W(Y)$ in $\bbH^{n\times n}$ as
\begin{equation}\label{eq:F(Y)}
F_W(Y):=\begin{bmatrix}
       Y \\
       -I_n
     \end{bmatrix}^{\T}
     \Big[\Omega(Y)-\Omega(W)\Big]
     \begin{bmatrix}
       Y \\
       -I_n
     \end{bmatrix}
    \quad\mbox{for $Y\in\bbH^{n\times n}$}.
\end{equation}

\begin{lemma}\label{lm:diffHc>0}
Suppose that Assumption~\ref{asm:diffHc>0} holds.
Given $X_0\preceq X_{*}$,  if $\|X_{*}-X_0\|_2$ is sufficiently small, then
 we have
\begin{equation}\label{eq:suff-close}
F_W(Y)-(Y-W)BR^{-1}B^{\T}(Y-W)\succeq 0\quad\mbox{for $X_0\preceq W\preceq Y\preceq X_{*}$}.
\end{equation}
\end{lemma}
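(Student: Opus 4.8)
The plan is to first turn \eqref{eq:suff-close} into a single domination estimate for the map $\scrF_{Y}$ and then to close that estimate with Assumption~\ref{asm:diffHc>0} and the smallness of $\epsilon:=\|X_{*}-X_0\|_2$. Write $\Delta:=Y-W$; for $X_0\preceq W\preceq Y\preceq X_{*}$ we have $0\preceq\Delta\preceq X_{*}-X_0$, so $\|\Delta\|_2\le\epsilon$. Because $R_{\rmc}(X)\succ 0$ and $\Omega(X)$ is the Schur complement of $R_{\rmc}(X)$ in $\Gamma(X)$ (see \eqref{eq:Omega-dfn}), the quadratic form $\begin{bmatrix}Y\\-I\end{bmatrix}^{\T}\Omega(X)\begin{bmatrix}Y\\-I\end{bmatrix}$ is the minimum over $U\in\bbR^{m\times n}$ of $\begin{bmatrix}Y\\-I\\U\end{bmatrix}^{\T}\Gamma(X)\begin{bmatrix}Y\\-I\\U\end{bmatrix}$, attained at $U_X:=-R_{\rmc}(X)^{-1}\Gamma_{12}(X)^{\T}\begin{bmatrix}Y\\-I\end{bmatrix}=R_{\rmc}(X)^{-1}\big(B^{\T}Y+L_{\rmc}(X)^{\T}\big)$, with defect the perfect square $(U-U_X)^{\T}R_{\rmc}(X)(U-U_X)$.

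The central computation is then to evaluate the two terms of $F_W(Y)$ (from \eqref{eq:F(Y)}) at the single argument $U_Y$: feeding $U_Y$ into the $\Gamma(W)$ form and using that $U_W$ minimizes it gives
\[
F_W(Y)=\begin{bmatrix}Y\\-I\\U_Y\end{bmatrix}^{\T}\big[\Gamma(Y)-\Gamma(W)\big]\begin{bmatrix}Y\\-I\\U_Y\end{bmatrix}+(U_Y-U_W)^{\T}R_{\rmc}(W)(U_Y-U_W).
\]
Since the $X$-dependent part of $\Gamma$ in \eqref{eq:mtx_Gamma}--\eqref{eq:SCARE-1b} has a vanishing first $n\times n$ block, the matrix $\Gamma(Y)-\Gamma(W)$ has zero first block row and column and its lower $(n+m)\times(n+m)$ corner is exactly $\Pi(\Delta)$; hence the first term collapses to $\begin{bmatrix}-I\\U_Y\end{bmatrix}^{\T}\Pi(\Delta)\begin{bmatrix}-I\\U_Y\end{bmatrix}$, which equals $\scrF_{Y}(\Delta)$ because $U_Y=R_{\rmc}(Y)^{-1}[L_{\rmc}(Y)+YB]^{\T}$ matches the lower block in \eqref{eq:scrF}. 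Therefore
\[
F_W(Y)=\scrF_{Y}(\Delta)+(U_Y-U_W)^{\T}R_{\rmc}(W)(U_Y-U_W)\succeq\scrF_{Y}(\Delta)\succeq 0 .
\]
This identity is the heart of the lemma, and it reduces \eqref{eq:suff-close} to proving $\scrF_{Y}(\Delta)\succeq\Delta BR^{-1}B^{\T}\Delta$.

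To finish I would combine Assumption~\ref{asm:diffHc>0} with a continuity argument. Writing $\scrF_{Y}(Z)=\sum_{i}M_i(Y)^{\T}ZM_i(Y)$ with $M_i(Y)=-A_0^i+B_0^iR_{\rmc}(Y)^{-1}[L_{\rmc}(Y)+YB]^{\T}$, the coefficients $M_i(Y)$ depend continuously on $Y$, so from $\scrF_{X_*}(Z)\succeq\alpha Z$ one aims for a neighborhood of $X_*$ and some $\alpha'>0$ (say $\alpha'=\alpha/2$) with $\scrF_{Y}(Z)\succeq\alpha'Z$ for all $Z\succeq 0$; shrinking $\epsilon$ forces every admissible $Y\in[X_0,X_*]$ into this neighborhood. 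Then $\Delta BR^{-1}B^{\T}\Delta\preceq\|BR^{-1}B^{\T}\|_2\,\Delta^2\preceq\|BR^{-1}B^{\T}\|_2\,\epsilon\,\Delta\preceq\alpha'\Delta\preceq\scrF_{Y}(\Delta)$ as soon as $\epsilon\le\alpha'/\|BR^{-1}B^{\T}\|_2$, which together with the identity yields \eqref{eq:suff-close}.

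The hard part is exactly this last domination. The reduction to a uniform bound $\scrF_{Y}(Z)\succeq\alpha'Z$ near $X_*$ is delicate because domination of a completely positive map over $\alpha'\cdot\mathrm{id}$ on the positive semidefinite cone is \emph{not} an open condition: a small change in the $M_i$ can break the inequality for rank-deficient $Z$, and crude estimates of the form $\Delta BR^{-1}B^{\T}\Delta\preceq c\|\Delta\|_2 I$ are useless on $\ker\Delta$, where the target vanishes but such an error bound does not. I therefore expect the genuine work to lie in a quantitative, direction-sensitive version of the continuity step—retaining the nonnegative term $(U_Y-U_W)^{\T}R_{\rmc}(W)(U_Y-U_W)$ and using that on $\ker\Delta$ both $\scrF_Y(\Delta)$ and this term are automatically $\succeq 0$ while $\Delta BR^{-1}B^{\T}\Delta$ is $0$—rather than in the algebraic identity, which is routine once the Schur-complement representation is in hand.
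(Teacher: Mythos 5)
Your Schur-complement identity is correct, and it is a genuinely different route from the paper's for the first half of the argument. The paper proceeds by (i) asserting that continuity of $W\mapsto\scrF_W$ upgrades Assumption~\ref{asm:diffHc>0} to $\scrF_W(Z)\succeq(2\alpha/3)Z$ for all $Z\succeq 0$ whenever $\|W-X_*\|_2$ is small, (ii) the first-order expansion $F_W(Y)=\scrF_W(Y-W)+O(\|Y-W\|_2^2)$, using that $\scrF_W$ is the Fr\'{e}chet derivative of $F_W$ at $Y=W$, and (iii) absorbing the remainder together with $(Y-W)BR^{-1}B^{\T}(Y-W)$ via a factorization $(Y-W)^{1/2}\bigl[\,\cdot\,\bigr](Y-W)^{1/2}$. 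Your exact formula
\[
F_W(Y)=\scrF_Y(\Delta)+(U_Y-U_W)^{\T}R_{\rmc}(W)\,(U_Y-U_W),\qquad \Delta:=Y-W,\quad U_X:=R_{\rmc}(X)^{-1}\bigl[L_{\rmc}(X)+XB\bigr]^{\T},
\]
which I have verified, replaces the paper's Taylor step (ii) by an identity whose remainder is manifestly positive semidefinite; this is cleaner than the paper's remainder, whose sign is uncontrolled and which, in the Loewner order, cannot simply be dominated by $(\alpha/3)(Y-W)$ on $\ker(Y-W)$. Your closing chain $\Delta BR^{-1}B^{\T}\Delta\preceq\|BR^{-1}B^{\T}\|_2\,\Delta^2\preceq\|BR^{-1}B^{\T}\|_2\,\epsilon\,\Delta$ is also correct.

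The gap is that you never prove the one inequality your reduction now hinges on: $\scrF_Y(Z)\succeq\alpha' Z$ for all $Z\succeq 0$, uniformly for admissible $Y$ near $X_*$. You propose to obtain it ``by continuity'' from Assumption~\ref{asm:diffHc>0} and then, rightly, point out that this cannot work as stated: domination of a completely positive map over $\alpha'\,\mathrm{id}$ on the positive semidefinite cone is not an open condition. Concretely, in your notation $\scrF_Y(Z)=\sum_i M_i(Y)^{\T}Z\,M_i(Y)$: if $r=1$ and $M_1=I$ (so the domination holds with $\alpha=1$), a generic perturbation $M_1'=I+\delta E$ gives, for $Z=zz^{\T}$, a rank-one matrix $M_1'^{\T}zz^{\T}M_1'$ whose range is spanned by $z+\delta E^{\T}z$, hence it dominates no positive multiple of $zz^{\T}$ unless $E^{\T}z\parallel z$ --- no matter how small $\delta$ is. So your proposal is a (very good) reduction plus an unproven core step; the ``quantitative, direction-sensitive'' argument you call for is never supplied, and without it the lemma is not established. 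In fairness, the paper's own proof disposes of exactly this step with the same one-line continuity assertion (its step (i)) and is open to the same objection you raise; but since you neither accept that assertion nor replace it --- say, by strengthening Assumption~\ref{asm:diffHc>0} to hold on a neighborhood of $X_*$, or by exploiting the specific structure of the perturbation $M_i(Y)-M_i(X_*)=B_0^i\bigl[T_{X_*}-T_Y\bigr]^{\T}$ --- your attempt remains an unfinished proof rather than a complete alternative.
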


\begin{proof}
The right-hand side of \eqref{eq:scrF} is continuous with respect to $W\in\bbH^{n\times n}$. Hence Assumption~\ref{asm:diffHc>0} ensures
that for sufficiently small $\|W-X_{*}\|_2$, we have
$\scrF_W(Z)\succeq (2\alpha/3)\, Z$ for all $Z \succeq0$.

It can be seen that $\scrF_W(Z)$ is the Fr\`{e}chet  derivative of $F_W$ at $Y=W$ along $Z$.
The first-order expansion of $F_W(Y)$ at $Y=W$ is
\begin{align*}
F_W(Y)&=\left.F_W'(Y)\right|_{Y=W}[Y-W]+O(\|Y-W\|_2^2) \\
   &=\scrF_W(Y-W)+O(\|Y-W\|_2^2).
\end{align*}
If $\|X_{*}-X_0\|_2$ is sufficiently small, so is $\|W-Y\|_2$. Hence for sufficiently small $\|X_{*}-X_0\|_2$, we have
\begin{align*}
F_W(Y)&\succeq (2\alpha/3)\, (Y-W)+O(\|Y-W\|_2^2) \\
   &\succeq (\alpha/3)\, (Y-W),
\end{align*}
Therefore, again for sufficiently small $\|X_{*}-X_0\|_2$, we have
\begin{align*}
F_W(Y)-(Y-W)BR^{-1}B^{\T}(Y-W)
  &\succeq (\alpha/3)\, (Y-W)-\|BR^{-1}B^{\T}\|_2(Y-W)^2 \\
  &=(Y-W)^{1/2}\Big[(\alpha/3) I_n-\|BR^{-1}B^{\T}\|_2(Y-W)\Big](Y-W)^{1/2} \\
  &\succeq 0,
\end{align*}
provided $(\alpha/3) I_n\succeq \|BR^{-1}B^{\T}\|_2(Y-W)$ which can be guaranteed by making $\|X_{*}-X_0\|_2$ sufficiently small.
\end{proof}

The purpose of having \Cref{lm:diffHc>0} is to justify  the requirement in the next theorem on
the sufficient closeness of the initial $X_0$ to $X_{*}$ such that
\eqref{eq:suff-close} holds for any  $X_0\preceq W\preceq Y\preceq X_{*}$. One of the assumption
in both \Cref{thm:mNT-mono-incr,thm:mNT-mono-decr} is that $X_k\succeq 0$ for all $k$, which is upsetting. Ideally,
we should look for other reasonable assumptions that can ensure $X_k\succeq 0$ for all $k$, but we are unable to do so
now.


\begin{theorem}\label{thm:mNT-mono-incr}
Assume \eqref{eq:assume-always}, and
suppose that Assumption~\ref{asm:diffHc>0} holds and that initial $X_0$ satisfies
$$
0\preceq X_0\preceq X_{*},\,\,
\mbox{\eqref{eq:suff-close} holds for any  $X_0\preceq Y\preceq W\preceq X_{*}$, and}\,\,
\scrR(X_0)\succeq 0.
$$
If  $X_k\succeq 0$ for all $k$,
then we have
 \begin{enumerate}[{\rm (a)}]
     \item $0\preceq X_0 \preceq X_1 \preceq \cdots \preceq X_k \preceq X_*$
     and $\scrR(X_k)\succeq 0$ for all $k\ge 0$;
     \item $\lim_{k\rightarrow \infty}X_k=X_{*}$.
 \end{enumerate}
\end{theorem}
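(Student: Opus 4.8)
The plan is to prove item (a) by induction on $k$, establishing simultaneously the three assertions $X_k \preceq X_{k+1}$, $X_k \preceq X_*$, and $\scrR(X_k) \succeq 0$; this parallels the proof of \Cref{thm:mNT-mono-decr}, but with every monotonicity inequality reversed, which is exactly what forces in the extra hypothesis \eqref{eq:suff-close}. Since $X_k \succeq 0$ for all $k$ by assumption, \Cref{lm:mNT-stablility} immediately gives that $\what A_k := A_k - G_k X_k$ is stable for every $k$, so each Lyapunov identity below falls under \Cref{lm:LyapnovEq-PSD-prop}. The base case $k=0$ is largely given: $X_0 \preceq X_*$ and $\scrR(X_0) \succeq 0$ are hypotheses, and $X_0 \preceq X_1$ comes from the monotonicity identity below.

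First I would record the Lyapunov identity obtained by rearranging \eqref{eq:mNT4SCARE-step'} (the quadratic terms cancel against $H_k + X_k G_k X_k$):
$$
\what A_k^{\T}(X_{k+1}-X_k) + (X_{k+1}-X_k)\what A_k = -\scrR(X_k).
$$
Whenever $\scrR(X_k) \succeq 0$ and $\what A_k$ is stable, \Cref{lm:LyapnovEq-PSD-prop} yields $X_{k+1} \succeq X_k$; this delivers both the base case $X_0 \preceq X_1$ and the monotonicity step $X_{\ell+1} \preceq X_{\ell+2}$, the latter once $\scrR(X_{\ell+1}) \succeq 0$ is known.

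The two genuinely new inequalities in the induction step are $X_{\ell+1} \preceq X_*$ and $\scrR(X_{\ell+1}) \succeq 0$, and both run on the same mechanism. For the first, I would expand, using \eqref{eq:mNT4SCARE-step'} with $k=\ell$, the fact that $\scrR(X_*)=0$ (via \Cref{lm:SCARE-equiv}), and the definition \eqref{eq:F(Y)} of $F_W$, to obtain
$$
\what A_\ell^{\T}(X_*-X_{\ell+1}) + (X_*-X_{\ell+1})\what A_\ell = -F_{X_\ell}(X_*) + (X_*-X_\ell)G_\ell(X_*-X_\ell).
$$
For the residual, substituting $\Omega(X_{\ell+1}) = \Omega(X_\ell) + [\Omega(X_{\ell+1})-\Omega(X_\ell)]$ into the expression for $\scrR(X_{\ell+1})$ from \Cref{lm:SCARE-equiv} gives
$$
\scrR(X_{\ell+1}) = F_{X_\ell}(X_{\ell+1}) - (X_{\ell+1}-X_\ell)G_\ell(X_{\ell+1}-X_\ell).
$$
In each case the key is that $G_\ell = B\,R_{\rmc}(X_\ell)^{-1}B^{\T} \preceq B R^{-1} B^{\T}$ (because $R_{\rmc}(X_\ell) \succeq R$), so the quadratic correction is dominated by $(\cdot)\,BR^{-1}B^{\T}(\cdot)$; invoking \eqref{eq:suff-close} with $(W,Y)=(X_\ell,X_*)$ and with $(W,Y)=(X_\ell,X_{\ell+1})$, respectively, then forces the right-hand sides to the required signs. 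This gives $X_{\ell+1}\preceq X_*$ through \Cref{lm:LyapnovEq-PSD-prop} and $\scrR(X_{\ell+1})\succeq 0$ directly, after which $X_{\ell+1}\preceq X_{\ell+2}$ follows from the first identity, closing the induction and proving item (a).

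I expect the main obstacle to be precisely this sign control: in the decreasing regime of \Cref{thm:mNT-mono-decr} the term $(X_*-X_\ell)G_\ell(X_*-X_\ell)$ carries the favourable sign and is simply dropped, whereas here it appears adversely and must be absorbed, which is exactly the purpose of \Cref{asm:diffHc>0} and its consequence \eqref{eq:suff-close}; the delicate bookkeeping is to verify the chain $X_0 \preceq X_\ell \preceq X_{\ell+1} \preceq X_*$ in the correct order so that \eqref{eq:suff-close} is legitimately applicable at each invocation. Finally, item (b) is immediate from item (a): the sequence is monotonically increasing and bounded above by $X_*$, hence converges to some $\bar X \preceq X_*$; passing to the limit in \eqref{eq:mNT4SCARE-step'}, where $(X_{k+1}-X_k)G_k(X_{k+1}-X_k)\to 0$, shows $\scrR(\bar X)=0$ with $\bar X \succeq 0$, so $\bar X = X_*$ by the uniqueness of the \PSD\ solution guaranteed under \eqref{eq:assume-always}.
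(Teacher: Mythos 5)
Your proposal is correct and follows essentially the same route as the paper's proof: the same induction on the three assertions, the same Lyapunov identities $\what A_\ell^{\T}(X_*-X_{\ell+1})+(X_*-X_{\ell+1})\what A_\ell=-F_{X_\ell}(X_*)+(X_*-X_\ell)G_\ell(X_*-X_\ell)$ and $\scrR(X_{\ell+1})=F_{X_\ell}(X_{\ell+1})-(X_{\ell+1}-X_\ell)G_\ell(X_{\ell+1}-X_\ell)$, resolved by \Cref{lm:mNT-stablility}, \Cref{lm:LyapnovEq-PSD-prop}, and \eqref{eq:suff-close}, with item (b) from monotone convergence plus uniqueness of the \PSD\ solution. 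Your explicit remark that $G_\ell=BR_{\rmc}(X_\ell)^{-1}B^{\T}\preceq BR^{-1}B^{\T}$ is needed to invoke \eqref{eq:suff-close} is a point the paper leaves implicit, and is a welcome clarification.
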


\begin{proof}
We prove item (a) by induction. For $k=0$, we already have $X_0\preceq X_{*}$ and $\scrR(X_0)\succeq 0$. By \eqref{eq:Pi_k+M_k}, we have
\begin{align*}
      ( A_0 - G_0 X_0)^{\T}(X_1 - X_0) + ( X_1 - X_0)( A_0 - G_0 X_0) =-\scrR(X_0)\preceq 0.
\end{align*}
Since $A_0-G_0X_0$ is stable, by \Cref{lm:LyapnovEq-PSD-prop}, we obtain that  $X_0\preceq X_1$.
Suppose now that item (a) is true for $k=\ell\ge 0$. We will prove it for $k=\ell+1$.
It can be verified  by \eqref{eq:mNT4SCARE-step'} for $k=\ell$ that
\begin{align*}
( A_{\ell} -  G_{\ell}X_{\ell})^{\T}(X_{*} - X_{\ell+1})
    &+ ( X_{*} - X_{\ell+1})( A_{\ell} - G_{\ell} X_{\ell})  \nonumber\\
    &= \begin{bmatrix}
          X_{*}\\
         -I
       \end{bmatrix}^{\T} \Omega(X_{\ell})
       \begin{bmatrix}
          X_{*}\\
         -I
       \end{bmatrix}+(X_{*}-X_{\ell})G_{\ell}(X_{*}-X_{\ell}) \\
    &= -\begin{bmatrix}
          X_{*}\\
         -I
       \end{bmatrix}^{\T}\Big[\Omega(X_{*})- \Omega(X_{\ell})\Big]
      \begin{bmatrix}
        X_{*}\\
       -I  \end{bmatrix}+(X_{*}-X_{\ell})G_{\ell}(X_{*}-X_{\ell}) \\
    &=-F_{X_{\ell}}(X_*)+(X_{*}-X_{\ell})G_{\ell}(X_{*}-X_{\ell}) \\
    &\preceq 0,
\end{align*}
where we have used \eqref{eq:SCARE-2} and that $X_{*}$ is a solution to \SCARE\ \eqref{eq:SCARE-0} for the second equality,
and \Cref{lm:diffHc>0} and $0\preceq X_{\ell}\preceq X_{*}$ for the last inequality.
%
Therefore, $X_{\ell+1}\preceq X_{*}$ by \Cref{lm:LyapnovEq-PSD-prop}.
Next, we show that $X_{\ell+1}\preceq X_{\ell+2}$ and $\scrR(X_{\ell+1})\succeq 0$. Since
\begin{align*}
\begin{bmatrix}
        X_{\ell+1}\\
        -I
       \end{bmatrix}^{\T}\Omega(X_{\ell})\begin{bmatrix}
        X_{\ell+1}\\
     -I  \end{bmatrix}+(X_{\ell+1} - X_{\ell}) G_{\ell} (X_{\ell+1} - X_{\ell})= 0,
\end{align*}
we have
\begin{align*}
\scrR(X_{\ell+1})
    &= \begin{bmatrix}
        X_{\ell+1}\\
        -I
       \end{bmatrix}^{\T} \Omega(X_{\ell+1})
       \begin{bmatrix}
        X_{\ell+1}\\
        -I
       \end{bmatrix}\\
     &= \begin{bmatrix}
        X_{\ell+1}\\
        -I
       \end{bmatrix}^{\T}\Big[\Omega(X_{\ell+1})-\Omega(X_{\ell})\Big]
        \begin{bmatrix}
           X_{\ell+1}\\
           -I
        \end{bmatrix}- (X_{\ell+1} - X_{\ell}) G_{\ell} (X_{\ell+1} - X_{\ell}) \\
     &=F_{X_{\ell}}(X_{\ell+1})-(X_{\ell+1} - X_{\ell}) G_{\ell} (X_{\ell+1} - X_{\ell}) \\
     &\succeq 0,
\end{align*}
where we have used $X_0\preceq X_{\ell}\preceq X_{\ell+1}\preceq X_{*}$ and \Cref{lm:diffHc>0}.
Finally,
\begin{align*}
( A_{\ell+1} - G_{\ell+1} X_{\ell+1})^{\T}&(X_{\ell+2} - X_{\ell+1}) + ( X_{\ell+2} - X_{\ell+1})( A_{\ell+1} - G_{\ell+1} X_{\ell+1})  \nonumber\\
     &=-\begin{bmatrix}
        X_{\ell+1}\\
        -I
       \end{bmatrix}^{\T} \Omega(X_{\ell+1})
       \begin{bmatrix}
        X_{\ell+1}\\
     -I  \end{bmatrix} \\
&=-\scrR(X_{\ell+1})\preceq 0.
\end{align*}
Hence, $X_{\ell+1}\preceq X_{\ell+2}$ by \Cref{lm:LyapnovEq-PSD-prop}. The induction process is completed.

Since the \PSD\ solution $X_*$ of \SCARE\ \eqref{eq:SCARE-0} is unique,
item (b) is a corollary of item (a).
\end{proof}

\section*{Acknowledgments}
Huang, Kuo, and Lin are supported in part by
NSTC 110-2115-M-003-012-MY3, 110-2115-M-390-002-MY3 and 
112-2115-M-A49-010-, respectively.
Lin is also supported in part by the Nanjing Center for Applied Mathematics.
Li is supported in part by US NSF DMS-2009689.


\bibliographystyle{abbrv}
\bibliography{strings,research_papers}

\end{document}